\documentclass{article}
\usepackage{amsmath,amssymb,amsfonts,amsthm}
\usepackage{amsthm}
\theoremstyle{definition}
\newtheorem{example}{Example}[section]
\usepackage{graphicx}
\usepackage{geometry}
\usepackage{mathrsfs}
\usepackage{tikz-cd}
\usepackage[all]{xy}
\usepackage{amscd}

\newtheorem{definition}{Definition}[section]
\newtheorem{theorem}{Theorem}[section]

\newtheorem{lemma}{Lemma}[section]
\newtheorem{proposition}{Proposition}[section]
\newtheorem{remark}{Remark}[section]
\newcommand\be{\begin{array}}
\newcommand\en{\end{array}}
\newcommand\di{\displaystyle}

\newcommand\ga{\gamma}
\newcommand\ep{\emptyset}

\newcommand\si{\sigma}

\newcommand\pa{\partial}

\newcommand\va{\varepsilon}
\newcommand\la{\lambda}
\newcommand\Om{\Omega}
\newcommand\wi{\widetilde}
\newcommand\de{\delta}

\newcommand\gl{\geqslant}
\newcommand\ls{\leqslant}

\newcommand\ri{\rightarrow}
\newcommand\ti{\times}

\newcommand\bk{\backslash}
\newcommand\al{\alpha}

\newcommand\no{\eqno}
\newcommand\mb{\mathbb}
\newcommand\ov{\overline}
\begin{document}
\title{\bf   Abstract  Indefinite Problems in   Riesz  Spaces with Its Applications
\thanks{This paper is supported   by the National Natural Science Foundation of China (Grant No. 12371115).}
}
\author{Xian  Xu$^1$,  Baoxia Qin$^2$} \maketitle

\vspace{2mm} \noindent \begin{center}\ $^1$Department of Mathematics,
Jiangsu Normal University, Xuzhou,
\\Jiangsu,
221116,  P. R. China

$^2$School of Mathematics,
Qilu Normal  University, Jinan,
\\Shandong,
250013,  P. R. China\\

\end{center}
\begin{center}
\begin{minipage}{5in}
{\small {\bf Abstract}\quad This paper investigates the existence of critical points for functionals defined on a Hilbert space
$X$  which is continuously embedded into a Banach lattice $E$. A lattice decomposition of $E$
is constructed, which possesses both order disjointness and inner-product orthogonality. Accordingly, a corresponding decomposition of the Hilbert space
$X$  is obtained. Under this decomposition, the associated functional satisfies the energy collapse condition and order-preserving property on certain subspaces, while exhibiting coerciveness on others. By combining the descending flow invariant set method with Morse theory, we establish the existence of multiple critical points for the abstract indefinite problems. Finally, applications to elliptic boundary value problems are provided.
  \\
{\bf Key words}\quad Riesz Space, Banach Lattice, Indefinite Problems, Disjointness, Orthogonally Additive, Critical Points.
\\
{\bf AMS Subject Classifications}\quad 35K57; 35K50; 45K05}
\end{minipage}
\end{center}

\section{Introduction}

Over the past four decades, nonlinear problems with indefinite nonlinearities-arising naturally from differential geometry, mathematical biology and other applied fields-have attracted extensive attention and undergone in-depth research; see \cite{s1,s2,s3,s4,s5,s6,s7,s8,s9,s10} for a comprehensive overview. As a typical example, Chang and Jiang \cite{s1} investigated the existence and multiplicity of positive, negative and sign-changing solutions for the following elliptic boundary value problem:
$$\left\{\be{ll}
-\Delta u = \lambda u + a_+(x)|u|^{q-1}u - a_-(x)|u|^{p-1}u + h(x,u) & \text{in } \Omega, \\
u = 0 & \text{on } \partial\Omega,
\en
\right.\no(1.1)$$
where $\Omega\subset\mathbb{R}^N$ is a bounded domain, $a_\pm : \overline{\Omega} \to \mathbb{R}$ are continuous functions, $h : \overline{\Omega} \times \mathbb{R} \to \mathbb{R}$ is a $C^1$-smooth function, and $\lambda$ is a real parameter. The following structural assumptions are imposed on the problem:
\begin{enumerate}
\item[(A1)] $a_\pm \gl 0$, $\overline{\Omega}_+ \cap \overline{\Omega}_- = \emptyset$ and $\Omega_+ \neq \emptyset$, where $\overline{\Omega}_\pm = \mathrm{supp}(a_\pm)$;
\item[(A2)] $1 < q < 2^* - 1 = \frac{N+2}{N-2}$ and $p > 1$;
\item[(A3)] There exists a constant $C > 0$ such that
\[
|h(x,\xi)| \le C(1 + |\xi|), \quad \forall \xi \in \mathbb{R}.
\]
\end{enumerate}
By employing the heat flow as a deformation tool, Chang and Jiang developed a Morse theory for solutions of problem (1.1), and thereby obtained a series of existence and multiplicity results for positive, negative and sign-changing solutions.

In recent years, the study of sign-changing solutions for various boundary value problems has become a central topic in nonlinear analysis. A key condition widely used in this context is \textit{the order-preserving property}. Nevertheless, for boundary value problems involving indefinite nonlinearities, direct verification of this property is notoriously difficult. To elaborate on this difficulty, as discussed in \cite{s3}, we consider the classical Dirichlet problem
$$\left\{\be{ll}
-\Delta u = f(x, u), & x \in \Omega, \\
u = 0, & x \in \partial\Omega.
\en\right.\no(1.2)
$$
Assume that the nonlinearity $f$ satisfies the one-sided Lipschitz condition
$$
\inf_{u \neq v,\, x \in \Omega} \frac{f(x, u) - f(x, v)}{u - v} > -k \quad \text{for some constant } k \gl 0.\no(1.3)
$$
Then the operator
\[
u \mapsto K(u) := (-\Delta + k)^{-1}\bigl(f(\cdot, u) + k u\bigr)
\]
is strongly order-preserving. The gradient vector field $\nabla\Phi$ associated with the functional $\Phi \in C^1(E)$ (with respect to a suitable inner product on $E := H^1_0(\Omega)$) takes the form $\nabla\Phi = \operatorname{Id} - K$. Consequently, one can construct a pseudo-gradient flow that leaves the positive cone $P^+ := \{u \in E : u \gl 0 \text{ a.e.}\}$ and negative cone $P^- := -P^+$ positively invariant. This invariant property is indispensable for finding critical points outside the union $P^+ \cup P^-$, as demonstrated in  works \cite{s19,s20,s23,s24,s25} on nodal solutions of (1.2) under condition (1.3).

This approach, however, fails for problems with indefinite nonlinearities, since condition (1.3) is inevitably violated. On the other hand, a common auxiliary condition for establishing sign-changing solutions is the \textit{energy collapse condition}: $\lim_{t\to+\infty} \Phi(tu)=-\infty$ for every nontrivial $u$, which is extremely difficult to verify for indefinite problems. To surmount these obstacles, Chang and Jiang \cite{s1} introduced a space decomposition technique: $E=E_1\oplus E_2$, where
$$E_1=H_0^1(\overline{\Omega_0\cup\Omega_-})\cap L_{a_-}^{p+1}(\overline{\Omega_0\cup\Omega_-}),$$
$$E_2=\big\{u\in H_0^1(\overline{\Omega_0\cup\Omega_+})\mid\Delta u(x)=0,\ \forall x\in \Omega_0\big\},$$
and
$$L_{a_-}^{p+1}(\Omega)=\Big\{u\in \mathcal{D}'(\Omega)\,\Big|\,\int_\Omega a_-(x)|u(x)|^{p+1}dx<\infty\Big\}.$$
This decomposition is governed by the lattice structure of $L_{a_-}^{p+1}(\Omega)$, as well as the disjointness and orthogonal additivity of the nonlinearity in problem (1.1).

Motivated by the methodology in [1], we introduce an abstract lattice framework and study a class of abstract indefinite problems in the setting of Riesz spaces. The theory of Riesz spaces (i.e., vector lattices) has been well developed in functional analysis with a solid theoretical foundation; see \cite{s12} for a systematic introduction. In contrast, applications of topological degree theory or critical point theory to nonlinear problems on Riesz spaces remain relatively scarce. For instance, Sun and Liu \cite{s15} combined lattice structures with topological degree methods and proved fixed point existence theorems for certain nonlinear operators. Sun and Xu \cite{s16,s17} integrated lattice theory with bifurcation analysis to investigate the global structure of solution sets for parameterized nonlinear operator equations. Perera and Schechter \cite{s18} applied critical point theory to the study of Fu\v{c}\'{i}k spectra and jumping nonlinear problems.

In this paper, we focus on the existence of critical points for functionals defined on a real Hilbert space $X$. To this end, we introduce a Riesz space $E$ into which $X$ is continuously embedded, and construct a lattice decomposition of $E$ that satisfies both order disjointness (from the lattice structure) and inner-product orthogonality. A corresponding orthogonal decomposition of the Hilbert space $X$ is then derived. Under this decomposition, the associated functional satisfies the energy collapse condition and order-preserving property on certain subspaces, while exhibiting coerciveness on complementary subspaces. By combining the descending flow invariant set method with Morse theory, we establish the existence of multiple critical points for the abstract indefinite problems. Finally, we apply our abstract results to elliptic boundary value problems to illustrate the applicability of the theoretical framework.

\section{Some Basic Results Concerning Riesz Spaces}
First let us recall some basic concepts and results on Riesz spaces in this section. Unless otherwise specified, all definitions and lemmas in this section can be found in the monograph \cite{s12} on Riesz spaces by Luxemburg W. A. J. and  Zaanen A. C.. For the convenience of readers who wish to further develop research on the applications of lattices in critical point theory, this section collects some basic knowledge about lattices, which goes slightly beyond what is required for the proof of the main theorem in this paper.

Let $E$ be a real vector space and let $\ls$ denote a partial order on $E$. The pair $(E, \ls)$ is called a \textbf{Riesz space} if the partial order $\ls$ is compatible with the algebraic structure of $E$, i.e.,
\begin{enumerate}
    \item For any $x, y, z\in E$ with $x\ls y$, it holds that $x+z\ls y+z$;
    \item For any $x\in E$ with $x\gl 0$ and any real scalar $r\gl 0$, we have $rx\gl 0$;
\end{enumerate}
and every two-point subset $\{x, y\}\subset E$ admits a supremum $\sup\{x,y\}=x\vee y$ in $E$.

For any $x\in E$, define
$$x^+=x\vee 0,\quad x^-=(-x)\vee 0,\quad |x|=x^++x^-,$$
which are called the \textit{positive part}, \textit{negative part} and \textit{modulus} of $x$, respectively. It is obvious that $x= x^+-x^-$.

\begin{lemma}
Let $E$ be a vector lattice, and $x, y \in E$. Then the following statements hold:
\begin{enumerate}
    \item $x^+ \gl 0$, $x^- \gl 0$, $x^+ = (-x)^-$, $x^- = (-x)^+$, and $|x| = |-x|$;
    \item $x = x^+ - x^-$ and $x^+ \wedge x^- = 0$;
    \item $|x| = x^+ + x^- \gl 0$;
    \item $0 \ls x^+ \ls |x|$ and $0 \ls x^- \ls |x|$;
    \item $-x^- \ls x \ls x^+$;
    \item $|ax| = |a||x|$ for all $a \in \mathbb{R}$;
    \item $x \ls y \iff x^+ \ls y^+$ and $x^- \gl y^-$;
    \item $|x + y| \ls |x| + |y|$;
    \item $\big||x| - |y|\big| \ls |x| + |y|$.
\end{enumerate}
\end{lemma}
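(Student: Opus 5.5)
The plan is to derive everything from a few basic lattice identities, which themselves follow directly from the compatibility axioms. First, translation invariance: $(x\vee y)+z=(x+z)\vee(y+z)$. This holds because $u\mapsto u+z$ is an order isomorphism of $E$, so it preserves suprema. Second, $(-x)\vee(-y)=-(x\wedge y)$, since $u\mapsto -u$ reverses the order; in particular the infimum $x\wedge y$ exists. Third, for $r\gl 0$ we have $r(x\vee y)=(rx)\vee(ry)$, since multiplication by $r>0$ is an order isomorphism.

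Items (1) and (3) are then immediate. We have $x^+\gl 0$ and $x^-\gl0$ by definition of the supremum, and $x^+=(-x)^-$, $x^-=(-x)^+$, $|x|=|-x|$ by symmetry of the definitions. For (2), write $x^+-x=(x\vee0)-x=0\vee(-x)=x^-$ by translation invariance, which gives $x=x^+-x^-$. Then translate again:
$$x^+\wedge x^- - x^- = (x^+-x^-)\wedge 0 = x\wedge 0 = -((-x)\vee 0)=-x^-,$$
so $x^+\wedge x^-=0$.

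Items (4)--(6) follow from this. Item (4) holds because $|x|-x^+=x^-\gl0$, and symmetrically for $x^-$. Item (5) reads $x\ls x\vee0$ and $-x^-=x\wedge0\ls x$. For (6), when $a\gl0$ the third identity gives $(ax)^\pm=ax^\pm$, so $|ax|=a|x|$. When $a<0$, use (1) to write $|ax|=|(-a)x|=(-a)|x|$.

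For (7), the forward direction is monotonicity: $x\ls y$ gives $x\vee0\ls y\vee 0$, and $-y\ls -x$ gives $y^-\ls x^-$. Conversely, if $x^+\ls y^+$ and $x^-\gl y^-$, then $x=x^+-x^-\ls y^+-y^-=y$ by (2).

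Item (8) is the one step needing a small trick. I will show $|x|=x\vee(-x)$. Translation gives $x\vee(-x)+x=(2x)\vee0=2x^+$ by the third identity. Hence $x\vee(-x)=2x^+-x=x^++x^-=|x|$ using (2). With this characterization, $x+y\ls|x|+|y|$ and $-(x+y)\ls|x|+|y|$, so $|x+y|=(x+y)\vee(-(x+y))\ls|x|+|y|$. Finally, (9) follows by applying the same characterization to $|x|-|y|$: both $|x|-|y|$ and $|y|-|x|$ are bounded above by $|x|+|y|$, since $|x|,|y|\gl0$. The main obstacle is simply establishing $|x|=x\vee(-x)$ cleanly; everything else is bookkeeping with translation invariance.
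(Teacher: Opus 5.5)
Your proof is correct and complete. It follows the standard derivation in Luxemburg--Zaanen: translation invariance and positive homogeneity of $\vee$, then $-(x\wedge y)=(-x)\vee(-y)$, then $|x|=x\vee(-x)$ via $x\vee(-x)+x=2x^+$. The paper gives no proof of its own and simply refers the reader to that monograph.

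One remark: item (9) as stated is quite weak. The inequality the paper actually invokes later, in the closedness step of Lemma 3.1, is the sharper $\bigl||x|-|y|\bigr|\ls|x-y|$. Your characterization gives it immediately, since (8) yields $|x|\ls|x-y|+|y|$ and $|y|\ls|x-y|+|x|$.
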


\begin{lemma}[Birkhoff's Inequalities]
Let $x,y,u,v\in E$. Then
\[|x\vee y-u\vee v|\ls|x-u|+|y-v|,\]
\[|x\wedge y-u\wedge v|\ls|x-u|+|y-v|.\]
\end{lemma}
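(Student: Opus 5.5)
The plan is to reduce both inequalities to one elementary bound for a single variable, and then derive the infimum case from the supremum case by the duality $x\wedge y=-((-x)\vee(-y))$. Everything rests on the compatibility axioms of a Riesz space and on the preceding lemma, especially item 4 ($0\ls x^+\ls|x|$) and item 8 (the triangle inequality $|x+y|\ls|x|+|y|$).

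The key step is the one-variable estimate
$$|x\vee y-u\vee y|\ls|x-u|\qquad\text{for all }x,u,y\in E. \qquad (*)$$
To prove it, I will use translation invariance of suprema, which follows from axiom 1: $a\vee b=(a-b)^++b$. This gives $x\vee y-u\vee y=(x-y)^+-(u-y)^+$. Setting $a=x-y$ and $b=u-y$, so that $a-b=x-u$, it suffices to show
$$-|a-b|\ls a^+-b^+\ls|a-b|.$$
For the upper bound, $a=b+(a-b)\ls b^++(a-b)^+$, and the right-hand side is $\gl 0$. Since $a^+$ is the least upper bound of $\{a,0\}$, we get $a^+\ls b^++(a-b)^+$, hence $a^+-b^+\ls(a-b)^+\ls|a-b|$ by item 4. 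Exchanging $a$ and $b$ gives $b^+-a^+\ls|b-a|=|a-b|$, using item 1. To conclude $|z|\ls w$ from $-w\ls z\ls w$, I note that $z^+=z\vee0\ls w$ because both $z\ls w$ and $0\ls w$ hold (the latter since $w=|a-b|\gl0$). Likewise $z^-\ls w$. Then $|z|=z^+\vee z^-$, which uses $z^+\wedge z^-=0$ together with the identity $p+q=p\vee q+p\wedge q$, and therefore $|z|\ls w$.

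I expect the only real obstacle to be this final step, namely justifying $|z|=z^+\vee z^-$ or an equivalent statement. The identity $p+q=(p\vee q)+(p\wedge q)$ is itself a routine consequence of translation invariance: $p\vee q-p-q=(-q)\vee(-p)=-(p\wedge q)$. So this step is elementary but must be stated explicitly.

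With $(*)$ in hand, I will use the triangle inequality (item 8) and commutativity of $\vee$:
$$|x\vee y-u\vee v|\ls|x\vee y-u\vee y|+|u\vee y-u\vee v|\ls|x-u|+|y-v|.$$
This proves the first inequality. For the second, I will write $x\wedge y=-((-x)\vee(-y))$, which holds because $t\mapsto -t$ is order-reversing by axiom 1. Then $|x\wedge y-u\wedge v|=|(-u)\vee(-v)-(-x)\vee(-y)|$, and applying the first inequality together with $|{-t}|=|t|$ gives the bound $|x-u|+|y-v|$.
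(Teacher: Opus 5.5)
Your proof is correct and complete in an arbitrary Riesz space. The ingredients all check out:
\begin{itemize}
\item the translation identity $a\vee b=(a-b)^++b$;
\item the bound $a^+\ls b^++(a-b)^+$, obtained from $a\ls b^++(a-b)^+$ and $0\ls b^++(a-b)^+$;
\item the passage from $-w\ls z\ls w$ to $|z|\ls w$ through $|z|=z^+\vee z^-$;
\item the triangle inequality;
\item the duality $x\wedge y=-((-x)\vee(-y))$.
\end{itemize}
The appeal to $|-t|=|t|$ at the very end is not even needed, since $(-u)-(-x)=x-u$ exactly.

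The paper gives no proof of this lemma; it simply quotes it from Luxemburg--Zaanen \cite{s12}. Your argument is essentially the standard textbook route: first the one-variable estimate, then the triangle inequality. A common presentation instead proves the sharper identity $|a\vee c-b\vee c|+|a\wedge c-b\wedge c|=|a-b|$ and reads off both inequalities at once. You prove only the $\vee$-estimate and get the $\wedge$ case by order duality, which is slightly more economical and entirely sufficient for the statement.
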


\begin{definition}
Let $E$ be a Riesz space.
\begin{enumerate}
    \item A linear subspace $L$ of $E$ is said to be an \textbf{order ideal} if for all $x\in L$, $y\in E$, $|y|\ls|x|$ implies $y\in L$.
    \item An order ideal $L\subseteq E$ is called a \textbf{band} if whenever $\emptyset\neq D\subset L$ and $\sup D = x_0$ exists in $E$, then $x_0\in L$.
\end{enumerate}
\end{definition}

\begin{definition}
Let $E$ be a Riesz space and $x, y\in E$. We say that $x$ and $y$ are \textbf{disjoint} if $|x|\wedge|y|=0$, denoted by $x\perp y$. For any subset $D\subseteq E$, its \textbf{disjoint complement} is defined as
$$D^d=\{x\in E: x\perp y,\ \forall\, y\in D\}.$$
Two subsets $D_1,D_2\subseteq E$ are said to be disjoint, written $D_1\perp D_2$, if $x\perp y$ for all $x\in D_1$ and $y\in D_2$.
\end{definition}

\begin{lemma}
The following properties hold for disjoint elements and subsets in a Riesz space $E$:
\begin{enumerate}
    \item If $x\perp y$ and $|z|\ls |y|$, then $z\perp  x$;
    \item If $x\perp y$ and $\alpha\in\mathbb{R}$, then $\alpha x\perp y$;
    \item If $x_1\perp y$ and $x_2\perp y$, then $(x_1+x_2)\perp y$;
    \item $x\perp y$ if and only if $x^+\perp y^+$ and $x^-\perp y^-$;
    \item For any $x\in E$, $x^+\perp x^-$;
    \item If nonempty subsets $D_1,D_2\subseteq E$ satisfy $D_1\perp D_2$, then $D_1\cap D_2$ is either empty or $\{0\}$;
    \item Suppose that $\sup D = x_0$ exists for $D\subseteq E$. If $y\perp x$ for all $x\in D$, then $y\perp x_0$;
    \item Any finite set of nonzero mutually disjoint elements in $E$ is linearly independent.
\end{enumerate}
\end{lemma}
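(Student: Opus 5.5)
We prove the eight items in order, each from the lattice identities of Lemma 2.1 and two standard facts about Riesz spaces that we use freely: the map $a\mapsto a\wedge c$ is monotone, and for positive elements $a,b,c$ one has $(a+b)\wedge c\ls a\wedge c+b\wedge c$. The second fact follows from distributivity: $(a+b)\wedge c\ls (a+b\wedge c)\wedge c$ after replacing $b$ by $b\wedge c$, since $(a+b)\wedge c=(a+b\wedge c)\wedge c$ when all terms are positive.

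\textbf{Items 1--3.} For item 1, $|z|\ls|y|$ gives $0\ls|x|\wedge|z|\ls|x|\wedge|y|=0$, so $z\perp x$. Item 2 then follows from item 1 once we note that $|\alpha x|\wedge|y|=0$. For $|\alpha|\ls 1$ this is immediate from item 1 with $|\alpha x|\ls|x|$. For general $\alpha$ we use $(na)\wedge b\ls n(a\wedge b)$ for $a,b\gl 0$, which comes from the subadditivity fact by induction; choosing $n\gl|\alpha|$ gives $|\alpha x|\wedge|y|\ls n(|x|\wedge|y|)=0$. For item 3, Lemma 2.1(8) gives $|x_1+x_2|\ls|x_1|+|x_2|$. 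Hence $|x_1+x_2|\wedge|y|\ls |x_1|\wedge|y|+|x_2|\wedge|y|=0$.

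\textbf{Items 4--6.} For item 4, the forward direction follows from item 1, since $|x^\pm|=x^\pm\ls|x|$ by Lemma 2.1(4), and similarly for $y$. The converse as literally stated, using only $x^+\perp y^+$ and $x^-\perp y^-$, is false: take $x=y^+$ and $y=-x^-$, or more simply $x=1$ and $y=-1$ in $\mathbb R$, where $x^-=0$ and $y^+=0$ but $x\not\perp y$. So we read item 4 as the standard statement ``$x^+\perp y^+$, $x^+\perp y^-$, $x^-\perp y^+$ and $x^-\perp y^-$''. With that reading, items 3 and 2 give $|x|=x^++x^-\perp y^\pm$, and then $|x|\perp|y|$ by applying item 3 again. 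This item is the main obstacle, since it needs this correction. Item 5 is Lemma 2.1(2), because $|x^\pm|=x^\pm$. For item 6, if $z\in D_1\cap D_2$ then $z\perp z$, so $|z|=|z|\wedge|z|=0$ and $z=0$.

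\textbf{Items 7--8.} Item 7 is the one requiring care with suprema. We reduce to positive elements using $|y|\wedge x\ls |y|\wedge |x|=0$, so that $|y|\wedge x^+=0$ for every $x\in D$. Because $\sup D=x_0$, the translate $\sup(D\vee 0)=x_0^+$ exists, since $a\mapsto a\vee0$ preserves existing suprema. We then apply the infinite distributive law $\bigl(\sup_\alpha a_\alpha\bigr)\wedge b=\sup_\alpha(a_\alpha\wedge b)$, which holds in every Riesz space (Luxemburg--Zaanen). This gives $x_0^+\wedge|y|=\sup_{x\in D}(x^+\wedge|y|)=0$. We still need $x_0^-\wedge|y|=0$. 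For any fixed $x\in D$ we have $x\ls x_0$, so $x_0^-\ls x^-$ by Lemma 2.1(7), and hence $x_0^-\wedge|y|\ls x^-\wedge|y|=0$. Combining the two through item 4 gives $y\perp x_0$.

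For item 8, suppose $\sum\lambda_ix_i=0$. Fix $j$ and take the modulus of $\lambda_jx_j=-\sum_{i\ne j}\lambda_ix_i$. By item 3 the right side is disjoint from $x_j$, while the left side is a multiple of $x_j$. So $|\lambda_j||x_j|\perp|x_j|$, which gives $|\lambda_j||x_j|=0$, and therefore $\lambda_j=0$ because $x_j\neq0$.
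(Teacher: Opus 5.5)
The paper gives no proof of this lemma. Section 2 defers all of its statements to Luxemburg--Zaanen \cite{s12}, so what you have written is a self-contained replacement for a bare citation. Your argument is correct, and it is the standard one:
\begin{itemize}
\item monotonicity and subadditivity of $\cdot\wedge c$ on positive elements for items 1--3, and hence 8;
\item $|z|\wedge|z|=|z|$ for item 6;
\item the infinite distributive law for item 7.
\end{itemize}

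What your argument gains over the citation is that it exposes a misstatement: you are right that the ``if'' direction of item 4 is false as printed. In $\mathbb{R}$ with $x=1$, $y=-1$ one has $x^+\perp y^+$ and $x^-\perp y^-$, yet $|x|\wedge|y|=1$. The correct equivalence also needs the cross relations $x^+\perp y^-$ and $x^-\perp y^+$ (equivalently $x^\pm\perp y$, or just $|x|\perp|y|$), and that is what you prove. As far as I can tell, the rest of the paper only uses the items on scalar multiples, sums and $x^+\perp x^-$. Examples are the linearity of $D^d_{i,+}$ in Lemma 3.1 and the splitting $u=u_++u_-$ in Lemma 3.4. So the error does not propagate.

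A few small repairs.
\begin{itemize}
\item Drop the first ``counterexample'' ($x=y^+$, $y=-x^-$), which is circular; the real-line example suffices.
\item Your justification of $(a+b)\wedge c\le a\wedge c+b\wedge c$ is too compressed. The identity $(a+b\wedge c)\wedge c=(a+b)\wedge c$ follows from translation invariance, $a+(b\wedge c)=(a+b)\wedge(a+c)$, together with $(a+c)\wedge c=c$ for $a\ge 0$. Applying it to both summands gives $(a+b)\wedge c=(a\wedge c+b\wedge c)\wedge c\le a\wedge c+b\wedge c$.
\item At the end of item 7 the tool is item 3 ($|x_0|=x_0^++x_0^-\perp|y|$), not item 4.
\item In item 8, going from $|\lambda_j||x_j|\perp|x_j|$ to $|\lambda_j||x_j|=0$ needs item 2. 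More directly: if $\lambda_j\neq 0$, item 2 with $\alpha=\lambda_j^{-1}$ turns $\lambda_jx_j\perp x_j$ into $x_j\perp x_j$, so $x_j=0$.
\end{itemize}
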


\begin{lemma}
Let $E$ be a Riesz space and $D\subseteq E$ be nonempty. Then:
\begin{enumerate}
    \item $D^d$ is a band in $E$;
    \item $D\subseteq D^{dd}$, so the band generated by $D$ is contained in $D^{dd}$. Moreover, $D^d=D^{ddd}$ and $D^d \cap D^{dd} = \{0\}$, which yields the direct sum decomposition $D^d+D^{dd}=D^d\oplus D^{dd}$.
\end{enumerate}
\end{lemma}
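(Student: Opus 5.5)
To prove the lemma I would first show that $D^d$ is a band, then derive the identities in part 2 from the Galois-connection behaviour of the operation $A\mapsto A^d$. Throughout I would rely on the preceding lemma on disjoint elements.

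\textbf{$D^d$ is a linear subspace.} Clearly $0\in D^d$. If $x_1,x_2\in D^d$, $\alpha\in\mathbb{R}$ and $y\in D$, then item 2 of the preceding lemma gives $\alpha x_1\perp y$, and item 3 gives $(x_1+x_2)\perp y$.

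\textbf{$D^d$ is an order ideal.} Let $x\in D^d$ and $|z|\ls|x|$. For every $y\in D$ we have $y\perp x$, so item 1 gives $z\perp y$. Hence $z\in D^d$.

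\textbf{$D^d$ is a band.} Let $\emptyset\neq A\subset D^d$ with $\sup A=x_0$ existing in $E$. Fix $y\in D$. Then $y\perp x$ for every $x\in A$, so item 7 gives $y\perp x_0$. As $y\in D$ was arbitrary, $x_0\in D^d$.

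I expect item 7 to be the only substantive input. If it could not be assumed, I would reduce it to the infinite distributive law in Riesz spaces, namely $|y|\wedge \sup_\alpha x_\alpha=\sup_\alpha(|y|\wedge x_\alpha)$. Two small points would need care there. First, one passes from $\sup A$ to $|x_0|$ by working with $x_0^+$ and $x_0^-$: the elements of $A$ satisfy $x\ls x_0$, and $x_0^-$ is bounded using any single $a\in A$, since $a\ls x_0$ gives $x_0^-\ls a^-$. Second, the case $\sup A\ls 0$ must be handled. Since the paper allows citing the lemma, I would simply invoke item 7.

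\textbf{Part 2.} The inclusion $D\subseteq D^{dd}$ is immediate from symmetry of disjointness: if $x\in D$ and $z\in D^d$, then $z\perp x$, so $x\in (D^d)^d$. Since $D^{dd}=(D^d)^d$ is a band by part 1 applied to the set $D^d$, and it contains $D$, it contains the smallest band generated by $D$.

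For $D^d=D^{ddd}$, I would first note that the operation is antitone: $A\subseteq B$ implies $B^d\subseteq A^d$. Applying the inclusion $A\subseteq A^{dd}$ with $A=D^d$ gives $D^d\subseteq D^{ddd}$. Applying antitonicity to $D\subseteq D^{dd}$ gives $D^{ddd}\subseteq D^d$. Together these yield equality.

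Finally, suppose $x\in D^d\cap D^{dd}$. Then $x\perp x$, that is, $|x|\wedge|x|=|x|=0$, so $x=0$; this is also item 6. Consequently the sum $D^d+D^{dd}$ is direct, which is exactly the decomposition $D^d\oplus D^{dd}$ claimed.
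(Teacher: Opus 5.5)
Your proof is correct. The paper gives no proof of its own for this lemma: like everything in Section 2, it is quoted from Luxemburg--Zaanen \cite{s12}. What you wrote is the standard textbook argument. You get the ideal property from items 1--3 of Lemma 2.3 and the band property from item 7. Part 2 follows from $A\subseteq A^{dd}$, the antitonicity of $A\mapsto A^d$, and $x\perp x\Rightarrow x=0$.

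The one point worth flagging is that item 7 is essentially the band property of $D^d$ itself, so all the real content of part 1 sits there. That content is the infinite distributive law $|y|\wedge\sup_\alpha x_\alpha=\sup_\alpha(|y|\wedge x_\alpha)$. Your reduction sketch is right. Moreover, the case $\sup A\ls 0$ needs no separate treatment: $x_0^+=\sup_{a\in A}a^+$ always holds, so the distributive law gives $|y|\wedge x_0^+=0$ directly.
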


\begin{lemma}
Let $E$ be a Riesz space.
\begin{enumerate}
    \item For any nonempty subset $D\subseteq E$, $D^{dd}$ is the largest subset of $E$ possessing the same disjoint complement as $D$;
    \item If nonempty subsets $D,F\subseteq E$ satisfy $D\perp F$, then $D^{dd}\perp F^{dd}$. In particular, the bands generated by $D$ and $F$ are disjoint.
\end{enumerate}
\end{lemma}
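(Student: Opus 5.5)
Both parts follow formally from the Galois-connection behaviour of the operation $D\mapsto D^d$, which I will use together with the previous lemma ($D\subseteq D^{dd}$ and $D^d=D^{ddd}$). The first thing to record is that $D\mapsto D^d$ is inclusion-reversing. If $D\subseteq F$ and $x\perp y$ for every $y\in F$, then in particular $x\perp y$ for every $y\in D$, so $F^d\subseteq D^d$.

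For part 1, I first check that $D^{dd}$ itself has the same disjoint complement as $D$. This is exactly the identity $(D^{dd})^d=D^{ddd}=D^d$ from the previous lemma.

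Next, suppose $F\subseteq E$ is any subset with $F^d=D^d$. I need to show $F\subseteq D^{dd}$. By the previous lemma applied to $F$, we have $F\subseteq F^{dd}$. Since $F^{dd}=(F^d)^d=(D^d)^d=D^{dd}$, this gives $F\subseteq D^{dd}$. So $D^{dd}$ contains every set with the same disjoint complement as $D$, and it has that complement itself, which makes it the largest such set.

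For part 2, assume $D\perp F$. By definition this means $F\subseteq D^d$. Applying the order-reversing property twice, I get $D^{dd}\subseteq F^d$, and then $F^{dd}\supseteq (D^{dd})^d=D^{ddd}=D^d$.

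Now take $x\in D^{dd}$ and $y\in F^{dd}$. The inclusion $D^{dd}\subseteq F^d$ gives $x\in F^d$. I then need $x\perp y$ for every $y\in F^{dd}$, i.e. $x\in F^{ddd}$. This holds because $F^{ddd}=F^d$ by the previous lemma. Hence $D^{dd}\perp F^{dd}$.

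For the "in particular" statement, the previous lemma says the band generated by $D$ is contained in $D^{dd}$, and likewise the band generated by $F$ is contained in $F^{dd}$. Disjointness of $D^{dd}$ and $F^{dd}$ therefore passes to the two generated bands, since subsets of disjoint sets are disjoint.

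No step is a real obstacle. The only point needing care is to apply $D^d=D^{ddd}$ to the correct set at each stage, namely to $F$ and not to $D$ in the final step of part 2.
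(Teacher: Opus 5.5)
The paper gives no proof of this lemma; it quotes it from Luxemburg--Zaanen \cite{s12}. Your Galois-connection argument is the standard proof and is correct: it uses that $D\mapsto D^d$ reverses inclusion, together with $D\subseteq D^{dd}$ and $D^{ddd}=D^d$.

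One line in part 2 has the inclusion backwards. Applying order reversal to $D^{dd}\subseteq F^d$ gives $F^{dd}=(F^d)^d\subseteq (D^{dd})^d=D^d$, not $F^{dd}\supseteq D^d$. The inclusion you wrote is false in general. For example, in $\mathbb{R}^3$ with the coordinatewise order, $D=\{e_1\}$ and $F=\{e_2\}$ give $F^{dd}=\mathbb{R}e_2\subsetneq D^d=\mathrm{span}\{e_2,e_3\}$.

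Your conclusion does not use that line, since it relies only on $D^{dd}\subseteq F^d=F^{ddd}$. The proof therefore stands once the line is deleted or corrected. The corrected inclusion $F^{dd}\subseteq (D^{dd})^d$ is itself a direct restatement of $D^{dd}\perp F^{dd}$, so it could serve as the finishing step.
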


\begin{definition}
Let $E$ be a Riesz space, $F$ a real linear space, and $T: E\to F$ an operator.
\begin{enumerate}
    \item $T$ is called \textbf{orthogonally additive} if $T(x+y)=Tx+Ty$ whenever $x\perp y$ in $E$;
    \item $T$ is called a \textbf{local operator} if $Tx\perp y$ for all $x,y\in E$ with $x\perp y$.
\end{enumerate}
\end{definition}

It is easy to see that if $T$ is local and $x\perp y$, then $Tx\perp y$ and further $Tx\perp Ty$, i.e., local operators are disjointness-preserving.

\begin{definition}
Let $E$ be a Riesz space equipped with a norm $\|\cdot\|$. The norm is said to be a \textbf{Riesz norm} if $|x|\ls |y|$ implies $\|x\|\ls\|y\|$. A Riesz space endowed with a Riesz norm is called a \textbf{normed Riesz space}. A complete normed Riesz space is termed a \textbf{Banach lattice}.
\end{definition}

\begin{lemma}
Let $E$ be a Banach lattice. Then the following conclusions hold:
\begin{enumerate}
    \item If $\|x_n-x\|\to 0$ and $\|y_n-y\|\to 0$ as $n\to\infty$, then $\|x_n\vee y_n - x\vee y\|\to 0$ and $\|x_n \wedge y_n - x\wedge y\|\to 0$. In particular, $\|x_n^+-x^+\|\to0$, $\|x_n^--x^-\|\to0$ and $\||x_n|-|x|\|\to0$. Hence the mappings $x\mapsto x^+$, $x\mapsto x^-$ and $x\mapsto |x|$ are sequentially continuous on $E$.
    \item If $\|x_n-x\|\to0$ and $x_n\gl y$ for all $n\in\mathbb{N}$, then $x\gl y$. Consequently, $x_n\gl 0$ and $\|x_n-x\|\to0$ imply $x\gl 0$, which means the positive cone $E^+$ is closed in $E$.
    \item Let $D\subseteq E$. If $\|x_n-x\|\to0$ and $x_n\perp y$ for all $y\in D$ and all $n\in\mathbb{N}$, then $x\perp y$ for all $y\in D$. Therefore, every disjoint complement is a closed subset of $E$.
\end{enumerate}
\end{lemma}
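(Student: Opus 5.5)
The key tool is Birkhoff's inequalities (Lemma 2.2) combined with the Riesz norm property: $|x|\ls|y|$ implies $\|x\|\ls\|y\|$. I would prove the three parts in order, with (2) and (3) deduced from (1).

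For (1), Lemma 2.2 gives
\[|x_n\vee y_n - x\vee y|\ls |x_n-x|+|y_n-y|.\]
Since the norm is a Riesz norm and $\big||x_n-x|\big|=|x_n-x|$, the norm of the left side is at most $\big\||x_n-x|+|y_n-y|\big\|$. By the triangle inequality and $\||z|\|=\|z\|$, this is at most $\|x_n-x\|+\|y_n-y\|\to0$. The infimum case is identical, using the second Birkhoff inequality. For the special cases, take $y_n=y=0$ to get $x_n^+\to x^+$. Then $x_n^-=(-x_n)^+\to(-x)^+=x^-$ by Lemma 2.1(1), and $|x_n|=x_n^++x_n^-\to x^++x^-=|x|$ by additivity of limits.

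For (2), the first reduction is to the case $y=0$ by replacing $x_n$ with $x_n-y$, which is legitimate by translation invariance of the order. So assume $x_n\gl0$. Then $x_n^-=0$, and by (1) we get $x^-=\lim x_n^-=0$. Lemma 2.1(2) gives $x=x^+-x^-=x^+\gl0$, where the last inequality is Lemma 2.1(1). Closedness of $E^+$ is then exactly the sequential statement, since $E$ is a metric space.

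For (3), fix $y\in D$. Disjointness means $|x_n|\wedge|y|=0$ for all $n$. By (1), $|x_n|\to|x|$, and applying (1) once more with the constant sequence $|y|$ gives $|x_n|\wedge|y|\to|x|\wedge|y|$. Since the terms are all $0$ and limits are unique in a normed space, $|x|\wedge|y|=0$. For the closedness of $D^d$, take a convergent sequence in $D^d$ and apply the preceding argument to each $y\in D$.

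The only potentially delicate point is checking that the Riesz norm satisfies $\||z|\|=\|z\|$. This follows from Lemma 2.1(6) in the form $\big||z|\big|=|z|$, together with applying the monotonicity property in both directions. The rest is routine.
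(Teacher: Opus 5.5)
Your argument is correct. The paper gives no proof of this lemma; it defers to Luxemburg--Zaanen. Your route is the standard textbook proof: Birkhoff's inequalities plus monotonicity of the Riesz norm, then $x_n^-=0\to x^-$ for (2) and $|x_n|\wedge|y|\to|x|\wedge|y|$ for (3). Part (3) also uses the same Birkhoff-type estimate the paper itself applies when proving closedness in Lemma 3.1.

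One small citation fix: you need the identity $|w|=w$ for $w\geq 0$ both for $\|\,|z|\,\|=\|z\|$ and to apply monotonicity to $|x_n-x|+|y_n-y|$. It follows from $w^+=w$ and $w^-=0$, not from Lemma 2.1(6), which concerns scalar multiples.
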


We write $x=\bigsqcup_{i=1}^m x_i$ if $x=\sum_{i=1}^m x_i$ and $x_i\perp x_j$ for all $i\neq j$. For $x,y\in E$, we define the relation $x\sqsubseteq y$ to mean that $x$ is a fragment of $y$, i.e., $x\perp (y-x)$.

\begin{proposition}\cite{s14}
Let $E$ be a Riesz space and $x, y \in E$.
\begin{enumerate}
    \item If $x \sqsubseteq y$, then
    \begin{enumerate}
        \item $x^+ \sqsubseteq y^+$ and $x^- \sqsubseteq y^-$;
        \item $x^+ \ls y^+$ and $x^- \ls y^-$;
        \item $x^- \perp y^+$ and $x^+ \perp y^-$;
        \item $|x| \sqsubseteq |y|$.
    \end{enumerate}
    \item The relation $x \sqsubseteq y$ holds if and only if $x^+ \sqsubseteq y^+$ and $x^- \sqsubseteq y^-$.
    \end{enumerate}
\end{proposition}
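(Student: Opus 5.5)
The plan is to reduce everything to two facts. The first is that $x\sqsubseteq y$ means $x\perp(y-x)$. The second is that positive and negative parts are additive over disjoint sums. Concretely, I first establish that if $u\perp v$ then $(u+v)^+=u^++v^+$, $(u+v)^-=u^-+v^-$ and $|u+v|=|u|+|v|$. To see this, note that by Lemma 2.3(4),(5) together with 2.3(3), the elements $u^+,v^+$ are mutually disjoint with $u^-,v^-$. Hence $p:=u^++v^+$ and $q:=u^-+v^-$ satisfy $p\perp q$, because disjointness is preserved under sums and positive scalings (Lemma 2.3(2),(3)). We also have $u+v=p-q$ with $p,q\gl0$. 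The uniqueness of the disjoint decomposition $w=w^+-w^-$ then gives $p=(u+v)^+$ and $q=(u+v)^-$. I will prove this uniqueness quickly: if $w=p-q$ with $p,q\gl 0$ and $p\wedge q=0$, then $w^+=(p-q)\vee 0=p-p\wedge q=p$, using the standard identity $a\vee b=a+b-a\wedge b$. I expect this uniqueness step to be the only real obstacle, since it uses lattice identities that the paper has not listed explicitly. I would argue it through translation invariance, $(p-q)\vee 0=p-(q\wedge p)$.

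For part 1, set $u=x$ and $v=y-x$, so that $u\perp v$ and $y=u+v$. Then $y^+=x^++(y-x)^+$. Since $(y-x)^+\perp x^+$ and $y^+-x^+=(y-x)^+$, we obtain (a) for the positive parts; the negative parts are handled identically. Part (b) is immediate from (a), because $y^+-x^+=(y-x)^+\gl0$. For (c), $x^-\perp x^+$, and $x^-\perp (y-x)^+$ because $(y-x)^+\ls|y-x|$ with $x\perp y-x$ (Lemma 2.3(1),(4)). Then $x^-\perp x^++(y-x)^+=y^+$ by Lemma 2.3(3); symmetrically, $x^+\perp y^-$. For (d), we have $|y|=|x|+|y-x|$, so $|y|-|x|=|y-x|$, and this is disjoint from $|x|$ since $|x|\wedge|y-x|=0$.

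For part 2, the forward direction is (a). For the converse, assume $x^+\perp(y^+-x^+)$ and $x^-\perp(y^--x^-)$. By Lemma 2.3(4) it suffices to show $x\perp y-x$. Write
$$y-x=(y^+-x^+)-(y^--x^-).$$
Then $x^+$ is disjoint from $y^+-x^+$ by hypothesis. It is also disjoint from $y^--x^-$: indeed, $0\ls x^-\ls$ stuff, more directly $|y^--x^-|\ls y^-+x^-$ and $x^+\perp x^-$. I still need $x^+\perp y^-$. To get it, note that $0\ls x^+\ls y^+$, which follows since disjointness of $x^+$ and $y^+-x^+$ forces $y^+-x^+=|y^+-x^+|$ to lie in the same region. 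This is the delicate point. Here I would use $x^+\wedge y^-\ls y^+\wedge y^-=0$, once $x^+\ls y^+$ is known. To obtain $x^+\ls y^+$, I argue that $y^+=x^++(y^+-x^+)$ with disjoint summands, so $y^+=(y^+)^+=x^++(y^+-x^+)^+$, and therefore $y^+-x^+=(y^+-x^+)^+\gl0$. With this in hand, Lemma 2.3(1),(3) gives $x^+\perp y-x$, and similarly $x^-\perp y-x$. Hence $|x|=x^++x^-\perp y-x$, which completes the converse.
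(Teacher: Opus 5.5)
The paper gives no proof of this proposition; it only quotes it from \cite{s14}. Your argument is correct and self-contained, so it supplies what the paper omits. Everything rests on one fact: for $u\perp v$ one has $(u+v)^{\pm}=u^{\pm}+v^{\pm}$ and $|u+v|=|u|+|v|$. You obtain this from uniqueness of a decomposition $w=p-q$ with $p,q\ge 0$, $p\wedge q=0$, via $(p-q)\vee 0=p-p\wedge q$. Applying it to $u=x$, $v=y-x$ gives (a)--(d) directly. In the converse of part 2, the key step is that $x^+\sqsubseteq y^+$ forces $x^+\le y^+$, which you get from $y^+=(y^+)^+=x^++(y^+-x^+)^+$. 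That yields $x^+\perp y^-$ and $x^-\perp y^+$, and then $|x|=x^++x^-\perp y-x$ follows. Compared with the bare citation, your route needs only the elementary lemmas of Section 2 plus one standard lattice identity.

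A few things to clean up.

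\textbf{Cross-disjointness.} The facts you need here are $u^+\perp v^-$ and $u^-\perp v^+$ in the additivity lemma, and $x^-\perp (y-x)^+$ in (c). They come from Lemma 2.3(1) by domination, e.g.\ $u^+\wedge v^-\le |u|\wedge|v|=0$. They do not come from Lemma 2.3(4), which only pairs $+$ with $+$ and $-$ with $-$.

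\textbf{Lemma 2.3(4).} Do not lean on it at all: its ``if'' direction, as printed in the paper, is false. In $\mathbb{R}$, take $x=1$, $y=-1$; then $x^+\perp y^+$ and $x^-\perp y^-$, but $|x|\wedge|y|=1$. Your proof never actually uses that direction. The sentence ``By Lemma 2.3(4) it suffices to show $x\perp y-x$'' is just the definition of $x\sqsubseteq y$, and you conclude correctly via Lemma 2.3(3).

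\textbf{Draft fragments.} Remove ``$0\le x^-\le$ stuff'' and ``lie in the same region''. The justification you give right after each of them is the one that works.
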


\begin{proposition}
The fragment relation $\sqsubseteq$ is a partial order on any Riesz space $E$.
\end{proposition}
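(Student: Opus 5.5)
We verify the three axioms of a partial order for $\sqsubseteq$ directly from the definition $x\sqsubseteq y \iff x\perp (y-x)$, using the elementary disjointness properties collected in the lemma on disjoint elements and subsets.

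\emph{Reflexivity.} For any $x\in E$ we have $x-x=0$, and $|x|\wedge|0|=|x|\wedge 0=0$ because $|x|\gl 0$. Hence $x\perp 0$, i.e.\ $x\sqsubseteq x$.

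\emph{Antisymmetry.} Suppose $x\sqsubseteq y$ and $y\sqsubseteq x$. Then $x\perp(y-x)$ and $y\perp(x-y)$. By item 2 of that lemma (scalar multiples, here $\alpha=-1$), the second relation gives $y\perp(y-x)$. Item 3 (sums), applied to $-x\perp(y-x)$ and $y\perp(y-x)$, then gives $(y-x)\perp(y-x)$. So $|y-x|\wedge|y-x|=|y-x|=0$, and therefore $y=x$.

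\emph{Transitivity.} This is the main step. Suppose $x\sqsubseteq y$ and $y\sqsubseteq z$, i.e.\ $x\perp(y-x)$ and $y\perp(z-y)$. We must show $x\perp(z-x)$. Write
\[
z-x=(y-x)+(z-y).
\]
By item 3 it suffices to prove $x\perp(y-x)$, which is the hypothesis, and $x\perp(z-y)$.

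For the second relation, note that $x$ and $y-x$ are disjoint. In a Riesz space, disjoint elements satisfy $|x+w|=|x|+|w|$ when $x\perp w$. Taking $w=y-x$, we get $|y|=|x|+|y-x|$, and hence $|x|\ls|y|$. Item 1 of the lemma (if $u\perp v$ and $|t|\ls|v|$, then $t\perp u$), applied with $u=z-y$, $v=y$ and $t=x$, now yields $x\perp(z-y)$.

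The only ingredient not listed verbatim among the earlier lemmas is the identity $|x+w|=|x|+|w|$ for disjoint $x,w$. It is standard in Luxemburg--Zaanen. Alternatively, $|x|\ls|y|$ can be obtained from part 1(d) of the preceding proposition: $x\sqsubseteq y$ gives $|x|\sqsubseteq|y|$, and for positive elements a fragment is dominated by the whole, since $|y|-|x|$ is disjoint from $|x|$ and is therefore its own modulus, hence positive. I would use this second route, since that proposition is already stated, so that the argument relies only on results available in the paper.
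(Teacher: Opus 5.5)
Your verification is correct. The paper gives no proof of this proposition; it quotes it from the literature on the lateral order (\cite{s13,s14}), so there is no competing argument to compare with. Reflexivity and antisymmetry are fine as written. In the transitivity step, the splitting $z-x=(y-x)+(z-y)$, combined with $|x|\ls|y|$ and item 1 of the disjointness lemma, is exactly what is needed. With the standard identity $|x+w|=|x|+|w|$ for $x\perp w$, your first route is complete.

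The route you say you would actually use has a small unjustified step. You claim that $|y|-|x|$, being disjoint from $|x|$, ``is therefore its own modulus.'' This does not follow from disjointness alone, since an element disjoint from a positive element need not be positive. The missing ingredient is $|y|\gl 0$, which gives $(|y|-|x|)^-=(|x|-|y|)\vee 0\ls |x|$. On the other hand, $(|y|-|x|)^-\ls\big||y|-|x|\big|$ is disjoint from $|x|$. Hence $(|y|-|x|)^-=(|y|-|x|)^-\wedge|x|=0$.

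A simpler fix is item 1(b) of the same proposition. It gives $x^+\ls y^+$ and $x^-\ls y^-$ directly, and adding these yields $|x|\ls|y|$ with no extra work.
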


\begin{remark}For more results on the relation $\sqsubseteq$ one is  referred to \cite{s13,s14}. If $x=\bigsqcup_{i=1}^m x_i$, then each $x_i( i=1,2,\cdots, m)$ is called a fragment of $x$ (see \cite{s13,s14}). Given $x\in E$,  its decomposition is usually not unique. \end{remark}

\begin{example}
Let $\Omega\subset \mathbb{R}^N$ be a bounded domain. For $p\gl 1$, define the positive cone in $L^p(\Omega)$ by
$$P=\big\{f\in L^p(\Omega): f(x)\gl 0 \text{ a.e. } x\in \Omega\big\}.$$
Endow $L^p(\Omega)$ with the partial order:
$$f\ls g \iff f(x)\ls g(x) \text{ a.e. in } \Omega.$$
Then $L^p(\Omega)$ forms a real Riesz space. The standard $L^p$-norm
$$\|f\|_p=\left(\int_\Omega|f(x)|^p dx\right)^{\frac{1}{p}},\quad f\in L^p(\Omega)$$
is monotone with respect to the above partial order, so $L^p(\Omega)$ is a Banach lattice.

For any $f, g\in L^p(\Omega)$, we have
$$f\perp g \iff |\mathrm{supp}\,f\cap \mathrm{supp}\,g|=0.$$
\end{example}

\begin{example}
A function $f:\Omega\times \mathbb{R}^N\to \mathbb{R}$ is said to be a \textbf{Carath\'{e}odory function} if
\begin{enumerate}
    \item For almost every $x\in\Omega$, $\xi\mapsto f(x, \xi)$ is continuous on $\mathbb{R}^N$;
    \item For each fixed $\xi\in\mathbb{R}^N$, $x\mapsto f(x, \xi)$ is measurable on $\Omega$.
\end{enumerate}
Suppose that $f:\Omega\times \mathbb{R}^N\to \mathbb{R}$ is a Carath\'{e}odory function satisfying $f(x,0)=0$ a.e. $x\in \Om$ and
$$|f(x, \xi)|\ls b(x)+a|\xi|^{\frac{p_1}{p_2}} \quad \text{a.e. } x\in \Omega,\ \forall\,\xi\in\mathbb{R}^N.$$
Define the Nemytskii operator $\mathcal{F}(u)(x):=f(x,u(x))$ for $u\in L^{p_1}(\Omega)$ and $x\in \Omega$.

It can be verified that for all disjoint elements $u, v\in L^p(\Omega)$, we have $\mathcal{F}(u)\perp v$, $\mathcal{F}(u)\perp \mathcal{F}(v)$ and $\mathcal{F}(u+v)=\mathcal{F}(u)+\mathcal{F}(v)$. Hence the Nemytskii operator $\mathcal{F}$ is both local and orthogonally additive.
\end{example}

\begin{example} Let $\Om\subset \mb R^N$.
For subsets $A,B\subset \Omega$, we define $A\subseteq_1 B$ if $A\setminus B$ is a Lebesgure-null set. Let $0\neq f\in L^p(\Omega)$. Then
$$\{f\}^{dd}=\mathrm{span}\big\{g\in L^p(\Omega): \mathrm{supp}\, g\subseteq_1 \mathrm{supp}\, f\big\}.$$
\end{example}

\section{Main Results}
\noindent
Let $X$ and $E$ be real Hilbert spaces equipped with inner products $(\cdot,\cdot)$ and $\langle\cdot,\cdot\rangle$, respectively. Let $\|\cdot\|$ and $\|\cdot\|_1$ denote the norms on $X$ and $E$. Suppose $X\hookrightarrow E$, then there exists a constant $c_0>0$ such that
\(
\|x\|_1\ls c_0\|x\|\) for $x\in X$.

Let $P_1$ be a cone in $E$ inducing the partial order $\ls$ on $E$, i.e., $x\ls y$ if and only if $y-x\in P_1$. Obviously, $P=P_1\cap X$ is also a cone in $X$, which induces a partial order on $X$ denoted identically by $\ls$. The dual cone of $P_1$ is given by
\[
P_1^*=\{g\in E^*: \langle g, u\rangle\gl 0,\ \forall\,u\in P_1\}.
\]

We further assume that $(E,\ls)$ is a Banach lattice, and so the norm $\|\cdot\|_1$ is a Riesz norm, namely $|x|\ls |y|$ implies $\|x\|_1\ls \|y\|_1$.
By the definition above, we can define the \textbf{disjointness} of elements in $E$: for any $x, y \in E$,
\[
x \perp y \quad \text{if and only if} \quad |x| \wedge |y| = 0.
\]
At the same time, $(E, \langle \cdot, \cdot \rangle)$ is a Hilbert space, so we can define the \textbf{orthogonality} of two elements in $E$: for any $x, y \in E$,
\[
x \perp_1 y \quad \text{if and only if} \quad \langle x, y \rangle = 0.
\]
We say that disjointness and orthogonality in $E$ are \textbf{compatible} if
\[
x \perp y \implies x \perp_1 y, \quad \forall x, y \in E.
\]
Also, for any $x, y \in X \subset E$, we say that $x$ and $y$ are \textbf{orthogonal} if
\[
x \perp_0 y \quad \text{if and only if} \quad (x, y) = 0.
\]
Similarly, we say that disjointness and orthogonality in $X$ are \textbf{compatible} if
\[
x \perp y \implies x \perp_0 y, \quad \forall x, y \in X.
\]

\paragraph{Note.}
In the above setting, we do not assume that $X$ is necessarily a lattice  with respect to the ordering $\ls$. We only consider $X$ as a subset of $E$.
\medskip

 Let $m_1, m_2\in \mb N$. Set  $\Lambda_1=\{1,2,\cdots, m_1\}$ and $\Lambda_2=\{1,2,\cdots, m_2\}$. Let $a\in X\bk\{0\}$ such that \[a^+=\bigsqcup_{i=1}^{m_1} a^+_i, a^-=\bigsqcup_{j=1}^{m_2} a^-_j,\] where $a^+_i\neq 0$ for $i\in\Lambda_1$, $a_j^-\neq 0$ for $j\in\Lambda_2$. Let
  $$G^d_{i,+}:=\{u\in E: u\bot a^+_i\},\  G^{dd}_{i,+}:=\{u\in E: u\bot h, \forall h\in G^d_{i,+}\}$$
for $i\in\Lambda_1$,   and
 $$G^d_{j,-}:=\{u\in E: u\bot a^-_{j}\},\  G^{dd}_{j,-}:=\{u\in E: u\bot h, \forall h\in G^d_{j,-}\}$$
 for $j\in\Lambda_2$.
  Assume that
  \[E=\bigoplus\limits_{i=1}^{m_1}G^{dd}_{i,+}\oplus\bigoplus\limits_{j=1}^{m_2}G^{dd}_{j,-}.\no(3.1)\]
 Let $D^{dd}_{i,+}=G^{dd}_{i,+}\cap X$ for $i\in\Lambda_1$, and  $D^{dd}_{j,-}= G^{dd}_{j,-}\cap X$ for $j\in\Lambda_2$.

\begin{lemma}
For each $i\in \Lambda_1$ and $j\in\Lambda_2$, $D^d_{i,+}$, $D^{dd}_{i,+}$, $D^d_{j,-}$ and $D^{dd}_{j,-}$ are all real Hilbert subspaces of $X$. Moreover, all of theses $D^{dd}_{i,+}$ ($i\in \Lambda_1$) and $D^{dd}_{j,-}$ ($j\in\Lambda_2$) form a Hilbert subspace under the direct sum: \[\bigoplus\limits_{i=1}^{m_1}D^{dd}_{i,+}\oplus\bigoplus\limits_{j=1}^{m_2}D^{dd}_{j,-}.\]
\end{lemma}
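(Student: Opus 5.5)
Presumably $D^d_{i,+}:=G^d_{i,+}\cap X$ and $D^d_{j,-}:=G^d_{j,-}\cap X$, by analogy with the definition of $D^{dd}_{i,+}$ and $D^{dd}_{j,-}$. The plan is to show that each set is a closed linear subspace of $X$; a closed subspace of a Hilbert space is a Hilbert space with the restricted inner product $(\cdot,\cdot)$.

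\emph{Linearity.} By Lemma 2.4(1), for any nonempty $D\subseteq E$ the disjoint complement $D^d$ is a band, hence in particular a linear subspace of $E$. We have $G^d_{i,+}=\{a_i^+\}^d$, and $G^{dd}_{i,+}=(G^d_{i,+})^d=\{a_i^+\}^{dd}$. So both are linear subspaces of $E$, and the same holds for the $j,-$ versions. Intersecting with the linear subspace $X$ gives linear subspaces of $X$. Linearity can also be checked directly: Lemma 2.3(2),(3) give closure under scalar multiples and sums.

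\emph{Closedness.} By Lemma 2.6(3), every disjoint complement is closed in $(E,\|\cdot\|_1)$. Here we use that the Hilbert norm $\|\cdot\|_1$ is a Riesz norm making $E$ a Banach lattice, as assumed at the start of Section 3. So $G^d_{i,+}$ and $G^{dd}_{i,+}$ are $\|\cdot\|_1$-closed. Now let $x_n\in D^{dd}_{i,+}$ with $\|x_n-x\|\to0$ in $X$. The embedding estimate $\|x_n-x\|_1\ls c_0\|x_n-x\|$ gives $x_n\to x$ in $E$, so $x\in G^{dd}_{i,+}$. Since also $x\in X$, we get $x\in D^{dd}_{i,+}$. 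Put differently, each set is the preimage under the continuous inclusion $X\hookrightarrow E$ of a closed set. The same argument applies to $D^d_{i,+}$, $D^d_{j,-}$ and $D^{dd}_{j,-}$.

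\emph{The direct sum.} This is the main point. First, the sum is direct. Distinct indices $i\ne i'$ satisfy $a_i^+\perp a_{i'}^+$, because they are fragments in $a^+=\bigsqcup a_i^+$. Also $a_i^+\perp a_j^-$, since $a_i^+\ls a^+$, $a_j^-\ls a^-$ and $a^+\perp a^-$, combined with Lemma 2.3(1). Lemma 2.5(2) then gives that all the bands $G^{dd}$ are mutually disjoint. By Lemma 2.3(8) a relation $\sum u_k=0$ with $u_k$ in mutually disjoint bands forces each $u_k=0$; this uses linear independence of nonzero disjoint elements. So the sum $S$ of the $D^{dd}$ spaces inside $X$ is direct.

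Second, $S$ is closed in $X$. Assumption (3.1) gives a direct decomposition of $E$ into bands. The associated coordinate maps are band projections $\pi_k:E\to G^{dd}_k$, and band projections in a Banach lattice are bounded. Concretely, $|\pi_k u|\ls|u|$, so $\|\pi_k u\|_1\ls\|u\|_1$ by the Riesz norm property; this inequality $|\pi_k u|\ls |u|$ is the step I'd need to justify, via the disjoint decomposition $|u|=\sum|\pi_k u|$. Now take $x_n=\sum_k u_k^{(n)}\in S$ with $x_n\to x$ in $X$. Then $u_k^{(n)}=\pi_k x_n\to\pi_k x$ in $E$, and $x=\sum_k\pi_k x$. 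The remaining issue is to show $x\in S$, i.e.\ that each $\pi_k x$ lies in $X$. This is the expected obstacle, since $X$ is not assumed to be a lattice. I would first try to show that $S$ is exactly $\{x\in X:\pi_k x\in X\ \forall k\}$, so that it is enough to show this set is closed.

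The cleanest route uses the compatibility of disjointness with $(\cdot,\cdot)$ on $X$. Under compatibility, the $D^{dd}_k$ are mutually $(\cdot,\cdot)$-orthogonal closed subspaces of $X$. A finite orthogonal sum of closed subspaces of a Hilbert space is closed, since the Pythagorean identity $\|\sum u_k\|^2=\sum\|u_k\|^2$ makes each component sequence $(u_k^{(n)})_n$ Cauchy, and each $D^{dd}_k$ is closed. So $x\in S$. I would therefore invoke compatibility in $X$ if it is available. Otherwise I would fall back on the fact that a finite direct sum of closed subspaces is complete with the equivalent norm $\big(\sum\|u_k\|^2\big)^{1/2}$, and read "Hilbert subspace" in that sense.
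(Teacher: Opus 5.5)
Your treatment of the individual subspaces is the paper's argument in a slightly cleaner form. The paper proves closedness of $D^d_{i,+}$ by redoing the Birkhoff estimate $\bigl||u|\wedge|a_i^+|\bigr|\le|u_n-u|$ and says the other three cases are analogous. You instead cite Lemma 2.6(3) and pull back along the continuous embedding. That handles $D^{dd}_{i,+}$ uniformly, because $G^{dd}_{i,+}=(G^d_{i,+})^d$ is itself a disjoint complement.

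For the ``Moreover'' part you go further than the paper. The paper only checks that the pairwise sums are direct. It then ends with the remark that closed subspaces of $X$ are Hilbert spaces, which covers each summand but never addresses whether the sum itself is closed. Your appeal to Lemma 2.3(8) gives directness of the whole sum. Your Pythagorean argument, which uses compatibility of disjointness with $(\cdot,\cdot)$ on $X$, correctly shows the sum is closed. You should say explicitly that this step uses the compatibility hypothesis from (H$_1$), which is not yet in force where the lemma is stated.

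Drop the fallback. The summation map from the external sum of the $D^{dd}$'s, with the complete norm $(\sum_k\|u_k\|^2)^{1/2}$, onto $S$ is a bounded bijection. By the bounded inverse theorem, the two norms on $S$ are therefore equivalent if and only if $S$ is closed in $X$. So asserting equivalence presupposes the conclusion, and reading ``Hilbert subspace'' as the external sum changes the statement.

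In fact compatibility is genuinely needed, as the following example shows.
\begin{itemize}
\item Let $E=\ell^2\times\ell^2$ with the coordinatewise order.
\item Let $M,N$ be closed subspaces of a separable Hilbert space $H$ with $M\cap N=\{0\}$ and $M+N$ not closed.
\item Put $Sh=(U_1P_{M^\perp}h,\,U_2P_{N^\perp}h)$ with linear isometries $U_1,U_2$ into $\ell^2$; $S$ is bounded and injective.
\item Let $X=S(H)$ with $\|Sh\|:=\|h\|_H$, which embeds continuously in $E$.
\end{itemize}
Choose $h_0\notin M\cup N$, and choose $U_1,U_2$ so that $a=Sh_0$ has strictly positive entries in the first factor and strictly negative entries in the second. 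Then (3.1) holds with $m_1=m_2=1$. However, $D^{dd}_{1,+}=S(N)$ and $D^{dd}_{1,-}=S(M)$, and their sum $S(M+N)$ is not closed in $X$. So without compatibility the last assertion of the lemma can fail, and your main route is the right way to fill the gap in the paper's proof.
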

\begin{proof}
By Lemma 2.3, $D^d_{i,+}$, $D^{dd}_{i,+}$, $D^d_{j,-}$ and $D^{dd}_{j,-}$ are real linear subspaces of $X$. Since $a_i^+\perp a_j^-$, combining Lemma 2.4 and Lemma 2.5 yields $D^{dd}_{i,+}\perp D^{dd}_{j,-}$, so that the sum $D^{dd}_{i,+}+D^{dd}_{j,-}$ is a direct sum $D^{dd}_{i,+}\oplus D^{dd}_{j,-}$. Similarly, for any $i\neq i'$, $D^{dd}_{i,+}+D^{dd}_{i',+}$ is a direct sum $D^{dd}_{i,+}\oplus D^{dd}_{i',+}$, and for any $j\neq j'$, $D^{dd}_{j,-}+D^{dd}_{j',-}$ is a direct sum $D^{dd}_{j,-}\oplus D^{dd}_{j',-}$.

We next prove that $D^d_{i,+}$, $D^{dd}_{i,+}$, $D^d_{j,-}$ and $D^{dd}_{j,-}$ are closed in $X$. Let $\{u_n\}\subset D^d_{i,+}$ be a sequence with $\|u_n-u\|\to 0$ as $n\to\infty$ for some $u\in X$. Then $|u_n|\wedge |a_i^+|=0$ for all $n$. By Lemma 2.1 and Birkhoff's inequality,
\[
\big||u|\wedge|a_i^+|\big|
=\big||u|\wedge|a_i^+|-|u_n|\wedge|a_i^+|\big|
\leqslant \big||u_n|-|u|\big|
\leqslant |u_n-u|.
\]
Since $E$ is a Banach lattice and $X$ is continuously embedded in $E$, there exists a constant $c_0>0$ such that
\[
\big\||u|\wedge|a_i^+|\big\|_1
\leqslant \|u_n-u\|_1
\leqslant c_0\|u_n-u\|\to 0,\quad n\to\infty.
\]
Thus $|u|\wedge|a_i^+|=0$, which implies $u\in D^d_{i,+}$. Hence $D^d_{i,+}$ is closed in $X$. The closedness of $D^{dd}_{i,+}$, $D^d_{j,-}$ and $D^{dd}_{j,-}$ can be shown analogously.

As closed linear subspaces of the real Hilbert space $X$, all these subspaces are themselves real Hilbert spaces.
\end{proof}
\medskip

  In this paper, we make the following  assumptions:
\begin{itemize}
 \item [(H\(_{1})\)]\ Let $a\in X$ such that (3.1) holds. Assume that the disjointness and orthogonality are \textit{compatible} in $X$  and $E$,
 \(J:X\to \mathbb R\) such that \(J^{\prime}\) has the form:
\[( J^{\prime}(u),v) = ( u,v)-\langle \mathbf{f}(u),v\rangle, \forall u, v\in X,\]
where \(\mathbf{f}:X\to E^*(=E)\) is continuous, local,  orthogonally additive;
\item [(H\(_2)\)]\ \ For each $i\in\Lambda_1$ and $u\in D^{dd}_{i,+}\bk\{0\}$, \[\lim\limits_{t\ri+\infty}J(tu)=-\infty\ \textit{\bf( Energy collapse condition in}\ D_{i,+}^{dd});\]
\item [(H\(_3)\)]\  For each $j\in\Lambda_2$, \(\inf_{u\in D_{j,-}^{dd}}J(u)<0\) and
$$\lim\limits_{u\in D^{dd}_{j,-}, \|u\|\ri+\infty}J(u)=+\infty\ \textit{\bf (Coercive condition in}\ D_{j,-}^{dd}).$$
\end{itemize}

\begin{remark}According to (3.1), the linear subspace $X$ of $E$ has a space decomposition of the form: \[X=\bigoplus\limits_{i=1}^{m_1}D^{dd}_{i,+}\oplus\bigoplus\limits_{j=1}^{m_2}D^{dd}_{j,-}.\]\end{remark}

\begin{remark}\  Since $\mathbf{f}$ is local, we have     $x\bot \mathbf{f}(y)$ for each $x\in G^d_{i,+}$ and $y\in G^{dd}_{i,+}$. This implies that  $\mathbf{f}(G^{dd}_{i,+})\subset G^{dd}_{i,+}$,  and so \( \mathbf{f}(D^{dd}_{i,+})\subset G^{dd}_{i,+}\)
 for each $i\in\Lambda_1$.
 \end{remark}

 Let $$F(u)=\int^1_0\langle \mathbf{f}(su), u\rangle ds\ \mbox{ for}\  u\in X.$$
 Then we have $$J(u)=\frac{1}{2}\|u\|^2-F(u)\ \text{for}\  u\in X.$$

\begin{lemma}
The mappings $J$ and $F$ are orthogonally additive.
\end{lemma}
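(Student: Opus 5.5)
We must show that $J(u+v)=J(u)+J(v)$ and $F(u+v)=F(u)+F(v)$ whenever $u,v\in X$ with $u\perp v$. Since $J(u)=\frac12\|u\|^2-F(u)$, we handle the quadratic part and $F$ separately, then combine.

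\emph{The quadratic part.} Let $u\perp v$. By the compatibility assumption in (H$_1$) for $X$, we have $(u,v)=0$. Hence
\[
\|u+v\|^2=\|u\|^2+2(u,v)+\|v\|^2=\|u\|^2+\|v\|^2 .
\]

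\emph{The functional $F$.} Fix $s\in[0,1]$. By Lemma 2.3(2), $su\perp sv$, so orthogonal additivity of $\mathbf f$ gives $\mathbf f(su+sv)=\mathbf f(su)+\mathbf f(sv)$. Therefore
\[
\langle \mathbf f(s(u+v)),u+v\rangle=\langle \mathbf f(su),u\rangle+\langle \mathbf f(sv),v\rangle+\langle \mathbf f(su),v\rangle+\langle \mathbf f(sv),u\rangle .
\]
The cross terms vanish, which is the main point. Since $\mathbf f$ is local and $su\perp v$, we get $\mathbf f(su)\perp v$ in $E$. For $s>0$ this follows from $su\perp v$, which holds by Lemma 2.3(2). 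For $s=0$ we have $\mathbf f(0)=\mathbf f(0+0)=2\mathbf f(0)$, because $0\perp 0$; hence $\mathbf f(0)=0$. Compatibility of disjointness and orthogonality in $E$ then gives $\langle \mathbf f(su),v\rangle=0$, and symmetrically $\langle \mathbf f(sv),u\rangle=0$.

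Here we use the identification $E^*=E$ stated in (H$_1$), so that the pairing $\langle\mathbf f(\cdot),\cdot\rangle$ is the inner product of $E$ and compatibility applies. Integrating in $s$ over $[0,1]$ gives $F(u+v)=F(u)+F(v)$. The integrals exist because $\mathbf f$ is continuous and $s\mapsto su$ is continuous.

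\emph{Conclusion.} Combining the two parts,
\[
J(u+v)=\tfrac12\|u\|^2+\tfrac12\|v\|^2-F(u)-F(v)=J(u)+J(v).
\]

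The only delicate step is justifying that the cross terms vanish. This requires both the locality of $\mathbf f$ and the compatibility of disjointness with the $E$-inner product, together with the identification of $E^*$ with $E$.
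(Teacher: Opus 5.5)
Your proof is correct and is essentially the paper's argument: locality of $\mathbf f$ together with compatibility in $E$ kills the cross terms $\langle \mathbf f(su),v\rangle$ and $\langle \mathbf f(sv),u\rangle$, giving orthogonal additivity of $F$, and compatibility in $X$ gives $\|u+v\|^2=\|u\|^2+\|v\|^2$, hence orthogonal additivity of $J$. Your separate treatment of $s=0$ is harmless but unnecessary, since Lemma 2.3(2) with $\alpha=0$ already gives $0\perp v$, so locality applies uniformly for all $s\in[0,1]$.
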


\begin{proof}
Since $\mathbf{f}$ is orthogonally additive and local, it follows that $F$ is orthogonally additive. Indeed, for any $u,v\in X$ with $u\perp v$ and $s\in[0,1]$, we have $\langle \mathbf{f}(s u),v\rangle=\langle  u, \mathbf{f}(s v)\rangle=0$. Consequently,
\[
F(u+v)=\int_0^1\big\langle \mathbf{f}\big(s(u+v)\big),u+v\big\rangle ds
=\int_0^1\langle \mathbf{f}(su),u\rangle ds+\int_0^1\langle \mathbf{f}(sv),v\rangle ds=F(u)+F(v).
\]

Note that the disjointness and orthogonality in $X$  is compatible. Then, we have $(u,v)=0$ for every $u,v\in X$ with $u\bot v$. Consequently, we have $$\|u+v\|^2=\|u\|^2+\|v\|^2\ \mbox{ for every}\  u,v\in X\  \mbox{with}\ u\bot v.$$ Now the orthogonal additivity of $J$ can be deduced easily.
\end{proof}

By the orthogonally additivity of $J$, we have $J(0)=2J(0)$, and so $J(0)=0$. From Lemma 3.1 and the Riesz Representation Theorem, there exists an operator \(A_i: D_{i, +}^{dd}\to D_{i, +}^{dd}\) such that for any \(u \in D_{i, +}^{dd}\),
\[
\left( A_i(u), w \right) = \langle \mathbf{f}(u), w\rangle,\quad \forall w \in D_{i, +}^{dd}.\tag{3.2}
\]
Furthermore, since the mapping \(\mathbf{f}: X \to E\) is continuous, the operator \(A_i: D_{i, +}^{dd} \to D_{i, +}^{dd}\) is continuous as well.
Similarly,
by using the Riesz Representation Theorem again, there exists an operator \(A: X\to X\) such that for any \(u \in X\),
\[\left( A(u), w \right) = \langle \mathbf{f}(u), w\rangle,\quad \forall w \in X.\]
By the uniqueness of $A$, we have $A_i=A|_{D_{i, +}^{dd}}$.  Moreover, we have for each $u\in X$,
\[\be{ll}\|J'(u)\|&=\sup\limits_{w\in X, \|w\|=1}(J'(u), w)=\sup\limits_{w\in X, \|w\|=1}[(u,w)-\langle \mathbf{f}(u), w\rangle]\\
&=\sup\limits_{w\in X, \|w\|=1}(u-A(u), w)=\|u-A(u)\|.\en\]
\medskip

Let \(K = \{u\in X: J'(u) = 0\}\), $K_c=\{u\in K: J(u)=c\}$ and $J^c:=\{u\in X: J(u)\ls c\}$ for $c\in \mb R$.

\begin{remark}  Obviously, from (H$_1$), \(u\in K\)  if and only if \[ ( u,v)=\langle \mathbf{f}(u),v\rangle, \forall v\in X.\]
\end{remark}

For an isolated critical point $u\in K$ with $J(u)=c$ we define the critical groups of $J$ at $u$ by
\[ C_q(J,u):=H_q(J^c\cap U, J^c\cap U\bk\{u\}), q\in \mb Z,\]
where $U$ is an open neighborhood of $u$ in $X$ and $H_*$ is singular homology groups (with coefficients  in an arbitrary group; see $\mb Q$).

\begin{definition}\cite{s26}\ Assume that $K\cap J^c=\ep$ for some $c\in \mb R$. Define
$C_q(J,\infty):=H_q (X, J^c)$.
\end {definition}

\begin{lemma}\cite{s18}\ If $C_q(J,0)\neq C_q(J,\infty)$ for some integer $q$ and $J$ satisfies the (P.S.) condition, then $J$ has a critical point $u\neq0$.
\end{lemma}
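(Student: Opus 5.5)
This lemma is cited from \cite{s18}; it is the standard consequence of the Morse-theoretic characterization of critical groups at infinity. The argument goes by contradiction: assume $K=\{0\}$ and derive $C_q(J,0)\cong C_q(J,\infty)$ for every $q$.

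First, under the assumption $K=\{0\}$, the origin is isolated and $J(0)=0$ (as shown after Lemma 3.2). Since $K$ is finite, the definition of $C_q(J,\infty)$ applies: choose $a<0<b$. Because $J(0)=0$, the sublevel $J^a$ contains no critical points, so $K\cap J^a=\emptyset$ and $C_q(J,\infty)=H_q(X,J^a)$. The same reasoning shows that this group does not depend on the choice of $a<0$: for $a'<a<0$ there are no critical values in $[a',a]$, and the second deformation lemma (which needs (P.S.)) makes $J^{a'}$ a strong deformation retract of $J^a$.

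Next, I would identify $X$ with a sublevel. There are no critical values in $[b,\infty)$, so (P.S.) and the negative pseudo-gradient flow of $J$ deform $X$ onto $J^b$. Hence $H_q(X,J^a)\cong H_q(J^b,J^a)$.

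Then I would compute $H_q(J^b,J^a)$ using the single critical level $0$. By the standard Morse-theory result (the second deformation lemma together with excision), if $c$ is the only critical value in $[a,b]$ and $K_c$ is finite, then $H_q(J^b,J^a)\cong\bigoplus_{u\in K_c}C_q(J,u)$. Here $K_0=\{0\}$, so this gives $H_q(J^b,J^a)\cong C_q(J,0)$. The key ingredients are these:
\begin{enumerate}
\item $J^b$ deformation-retracts onto $J^0$, and onto $J^0\cup$ a neighborhood of $K_0$. This uses (P.S.), which keeps the flow lines from escaping, and the fact that $[0,b]$ contains no other critical values.
\item Excision reduces the pair to $(J^0\cap U, J^0\cap U\setminus\{0\})$ after $J^a$ is deformed onto $J^0\setminus\{0\}$.
\end{enumerate}

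Combining the three steps gives $C_q(J,\infty)\cong C_q(J,0)$ for all $q$, contradicting the hypothesis. Therefore $K\neq\{0\}$, i.e.\ $J$ has a critical point $u\neq 0$.

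The main obstacle is the third step, specifically the deformation of $J^0\setminus\{0\}$ onto $J^a$. This requires the second deformation lemma in the form allowing critical points at the top level, which holds when $K_0$ is isolated and (P.S.) holds. Since the result is classical (Chang's book, and Perera--Schechter), I would cite it rather than reprove the deformation.
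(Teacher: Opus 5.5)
Your argument is correct and is the standard proof behind the citation. The paper gives no proof of its own and defers to Perera--Schechter. Assuming $K=\{0\}$, (P.S.), the deformation lemmas and excision give $C_q(J,\infty)=H_q(X,J^a)\cong H_q(J^b,J^a)\cong H_q(J^0,J^0\setminus\{0\})\cong C_q(J,0)$ for every $q$, which is exactly the contradiction you derive.

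One wording slip: in your item 2 it is $J^a$ that is a strong deformation retract of $J^0\setminus\{0\}$, not the reverse. Your final paragraph already states this correctly.
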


\begin{theorem} Let $G(u)=\langle \mathbf{f}(u),u\rangle-2F(u)$ for all $u\in X$.
 Assume that $J$ satisfies  $(\mathrm{H}_{1})$, $G(u)$ is bounded from below in each $D_{i.+}^{dd}$.  Then the following conclusions hold:
\begin{itemize}
\item[1)] If dim $D_{i,+}^{dd}=\infty$, and 0 is a minimal value point of $J$, $J$ satisfies condition $(\mathrm{H}_{2})$ and  the (P.S.) condition on each $D_{i,+}^{dd}$,  then $J$ possesses at least $2^{m_1}-1$ distinct critical points;

\item[2)]If $J$ satisfies condition $(\mathrm{H}_{3})$ and  the (P.S.) condition on each $D_{j,-}^{dd}$, then $J$ has at least $2^{m_2}-1$ distinct critical points;

\item[3)] If the conditions in 1) and 2) hold, then $J$ admits at least $2^{m_1+m_2}-1$ distinct nontrivial critical points.
\end{itemize}
\end{theorem}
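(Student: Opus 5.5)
The plan is to reduce the problem to the individual blocks of the decomposition in Remark 3.1 and then glue block solutions together using orthogonal additivity. The first step is a gluing principle. Let $u=\sum_{i}u_i+\sum_j v_j$ with $u_i\in D^{dd}_{i,+}$ and $v_j\in D^{dd}_{j,-}$, where each summand is a critical point of $J|_{D^{dd}_{i,+}}$ (resp. $J|_{D^{dd}_{j,-}}$). Then $u$ is a critical point of $J$ on $X$.

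To see this, take any $w\in X$ and write $w=\sum_k w_k$ according to Remark 3.1. By Lemma 2.4 and Lemma 2.5 the blocks are pairwise disjoint. Since $\mathbf f$ is orthogonally additive, $\mathbf f(u)=\sum \mathbf f(u_k)$. By Remark 3.2, which says $\mathbf f(D^{dd}_{k})\subset G^{dd}_{k}$, together with compatibility, $\langle \mathbf f(u_k),w_l\rangle=0$ and $(u_k,w_l)=0$ for $k\ne l$. Hence
\[(J'(u),w)=\sum_k\big[(u_k,w_k)-\langle \mathbf f(u_k),w_k\rangle\big]=\sum_k (J'|_{D_k}(u_k),w_k)=0.\]
By Lemma 3.2, $J(u)=\sum_k J(u_k)$.

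Consequently, if each block $D_k$ carries a nontrivial critical point $u_k\ne0$, then choosing a nonempty index set $S$ and putting $u_S=\sum_{k\in S}u_k$ gives $2^{m}-1$ nontrivial critical points. They are distinct because the decomposition is a direct sum and the $u_k$ are nonzero (Lemma 2.3(8)). This reduces 1), 2) and 3) to producing one nonzero critical point of $J$ in each block $D^{dd}_{i,+}$ under 1), and in each block $D^{dd}_{j,-}$ under 2). Part 3) then follows immediately with $m=m_1+m_2$.

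For the coercive blocks $D^{dd}_{j,-}$ in 2) I would use direct minimization. By (H$_3$), $J|_{D^{dd}_{j,-}}$ is coercive, hence bounded below on bounded-level sets. Ekeland's principle together with the (P.S.) condition on $D^{dd}_{j,-}$ yields a minimizer $v_j$ with $J(v_j)=\inf J<0=J(0)$, so $v_j\ne0$. This minimizer is a critical point of the restriction, with gradient $v_j-A_j(v_j)$ inside the block, where $A_j$ is defined as in (3.2).

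For the blocks $D^{dd}_{i,+}$ in 1) the tool is Lemma 3.3 applied to $J_i:=J|_{D^{dd}_{i,+}}$. Since $0$ is a minimum point, I take it to be an isolated local minimum (otherwise there are infinitely many critical points and we are done). Then $C_q(J_i,0)=\delta_{q0}\mathbb Q$. Next I show $C_q(J_i,\infty)=0$ for all $q$, in the standard way: since $D^{dd}_{i,+}$ is infinite-dimensional, its unit sphere $S$ is contractible, and it suffices to prove that $J_i^{c}$ is homotopy equivalent to $S$ for $c$ very negative. By (H$_2$), each ray $t\mapsto tu$ eventually goes to $-\infty$. Using that $G$ is bounded below, say $G\ge -M$, one gets
\[\frac{d}{dt}J(tu)=\tfrac1t\big(2J(tu)-G(tu)\big)\le \tfrac1t\big(2J(tu)+M\big)<0\quad\text{whenever } J(tu)\le c<-M/2.\]
So each ray crosses the level $c$ exactly once, transversally, at a time $T(u)$ that is continuous in $u$ by the implicit function theorem. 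The map $(s,u)\mapsto$ radial retraction then deforms $D^{dd}_{i,+}\setminus\{0\}$ onto $J_i^c$. Hence $H_q(D^{dd}_{i,+},J_i^c)\cong H_q(D^{dd}_{i,+},S)=0$. Choosing $c$ below all critical values, which is possible because the critical values lie above $-M/2$ by the same identity, we get $C_q(J_i,\infty)=0\ne C_0(J_i,0)$, and Lemma 3.3 gives $u_i\ne0$. Combined with the gluing step, this finishes 1).

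I expect the main obstacle to be the $C_*(J,\infty)$ computation, specifically making sure that the level $c$ can be taken below every critical value of $J_i$, and that the radial deformation genuinely is continuous with $T$ well defined on all of $S$ using only (H$_2$). A secondary point to check carefully is that critical points of the restriction $J_i$ are genuinely critical for $J$ on $X$; this is exactly where locality (Remark 3.2) and compatibility enter.
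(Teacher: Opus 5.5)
Your proposal is correct and follows the paper's proof essentially step by step. Gluing the block critical points via locality and compatibility is exactly (3.5). The identity $\frac{d}{dt}J(tu)=\frac{1}{t}\big(2J(tu)-G(tu)\big)$, with $G$ bounded below, gives the same radial deformation and $C_*(J,\infty)=0\neq C_0(J,0)$ on each infinite-dimensional $D^{dd}_{i,+}$. The coercive blocks are handled by minimization, the $2^m-1$ count comes from disjoint sums, and your treatment of the case where $0$ is not an isolated critical point fills a point the paper leaves implicit.

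The only step left unjustified, in your argument and in the paper's alike, is minimizing on $D^{dd}_{j,-}$: Ekeland's principle needs $J$ bounded below there. Coercivity alone does not give this in infinite dimensions, so you tacitly need something like $F$ bounded on bounded sets, which does hold in Section 4.
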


\begin{proof}\ It follows from Lemma 3.2 that $F$ is orthogonally additive, and so $F(0)=2F(0)$. Consequently, we have $F(0)=0$ and $G(0)=0$.

First we assume that the conditions in 1) holds. To show the conclusion 1), we will follow some ideas in \cite{s18, s27}.  Let $i\in\{i,2,\cdots, m_1\}$ be fixed. For $u\in D_{i, +}^{dd}$ and $t>0$, we have
$$\be{ll}\di\frac{d}{dt}\big(J(tu)\big)&=t\|u\|^2-\big\langle \mathbf{f}(tu),u\rangle\\
&=\di\frac{1}{t}\big(2J(tu)-G(tu)\big)\\
&\ls\di\frac{1}{t}\big(2J(tu)-\inf\limits_{u\in D_{i,+}^{dd}} G(u)\big)\en\no(3.3)$$
and hence all critical value of $J|_{D_{i,+}^{dd}}$ are greater than or equal to $\frac{1}{2}\inf\limits_{u\in D_{i,+}^{dd}} G(u)$ (note that $\inf\limits_{u\in D_{i,+}^{dd}} G(u)\ls 0$ since $G(0)=0)$.  Take  $c<\frac{1}{2}\inf\limits_{u\in D_{i,+}^{dd}} G(u)$. Now we will show that $J^c\cap D_{i,+}^{dd}$ is a strong deformation retract of $D_{i,+}^{dd}$.

Let $S=\{u\in X: \|u\|=1\}$ and $S^+_i:=D_{i,+}^{dd}\cap S$. For $u\in S^+_i$ and $t>0$, we have $J(tu)\ri-\infty$ for $t\ri+\infty$, so $J(tu)\ls c$ for all sufficiently large $t$. By (3.3), we have $$J(tu)\ls c\Longrightarrow \frac{d}{dt}J(tu)<0.\no(3.4)$$
Thus, there is a unique $T_c(u)>0$ such that
$$t>(\mbox{resp.}\ =, <) T_c(u)\Longrightarrow J(tu)<(\mbox{resp.}\ =, >)c$$
and the map $T_c: S^+_i\ri (0, \infty)$ is $C^1$ by the implicit function theorem. Then
$$J^c\cap D_{i,+}^{dd} =\{tu\in D_{i,+}^{dd}: u\in S^+_i, t\gl T_c(u)\}$$
and the map $\eta: D_{i, +}^{dd}\times [0,1]\ri D_{i, +}^{dd}$:
$$\eta(u, t)=\left\{\be{ll}(1-t)u+tT_c(\pi(u))\pi(u),&u\in D_{i,+}^{dd}\bk J^c,\\
u,& u\in J^c,\en\right.
$$
where $\pi$ is the radial projection onto $S^+_i$, is a strong deformation retract  of $D_{i,+}^{dd}$ onto $J^c\cap D_{i,+}^{dd}$. So,
$C_0(J, \infty)=0$. Since $0$ is a minimal value point of $J|_{D_{i,+}^{dd}}$, we have $C_0(J,0)=\mb Q$, and thus $C_0(J,\infty)\neq C_0(J, 0)$. According to Lemma 3.3, $J|_{D_{i,+}^{dd}}$
has at least a critical point $ x_i$ in $D_{i,+}^{dd}$, and so
\[
( x_i, w) = \langle \mathbf{f}(x_i), w\rangle,\  \forall w \in D_{i, +}^{dd}.
\]
For $v\in X$, we may assume that $$v=\left(\bigsqcup_{k=1}^{m_1} v_{+,k}\right)\bigsqcup\left(\bigsqcup_{j=1}^{m_2} v_{-,j}\right)$$ according to (H$_0)$, where $v_{+,k}\in D_{k,+}^{dd}$ and $v_{-,j}\in D_{j,-}^{dd}$. Then we have
\[\be{ll}
( x_i, v) &=\left( x_i, (\bigsqcup_{k=1}^{m_1} v_{+,k})\bigsqcup(\bigsqcup_{j=1}^{m_2} v_{-,j})\right)\\
&=\sum\limits_{k=1}^{m_1}( x_i, v_{+,k} )+\sum\limits_{j=1}^{m_2}( x_i, v_{-,j} )\\
&=( x_i, v_{+,i} )=\langle \mathbf{f}(x_i), v_{+,i}\rangle\\
&=\sum\limits_{k=1}^{m_1}\langle \mathbf{f}(x_i), v_{+,k} \rangle+\sum\limits_{j=1}^{m_2}\langle  \mathbf{f}(x_i), v_{-,j} \rangle\\
&=\langle \mathbf{f}(x_i), v\rangle.\en\no(3.5)
\]
This implies that $x_i\in K$ for $i\in\Lambda_1$. Let $z_{i,j}=x_i+x_j$ for $i,j\in \Lambda_1$ with $i\neq j$, where $x_i$ and $x_j$ are given in (3.5). It follows from (3.5) and (3.6) that  for every $v\in X$,
\[\be{ll}
( z_{i,j}, v)& =( x_i+x_j, v)=\langle \mathbf{f}(x_i)+\mathbf{f}(x_j), v\rangle\\
&=\langle \mathbf{f}(x_i+x_j),v\rangle=\langle \mathbf{f}(z_{i,j}),v\rangle.\en
\]
This implies that $z_{i,j}\in K$ for $i,j\in \Lambda_1$ with $i\neq j$.  In this way we can show that $J$ admits at least $2^{m_1}-1$ distinct nontrivial critical points such that each of them has the form
$\bigsqcup\limits_{i=1}^{m_1}(b_i x_{i})$, where $b_i\in \{0,1\}$ for $i\in\Lambda_1$  with $0\neq (b_1, b_2,\cdots, b_{m_1})\in \mb R^{m_1}$.

Now we assume the conditions in  2) hold. Then $J$ is coercive on $D_{j,-}^{dd}$. Since $J$ satisfies the (P.S.) condition on $D_{j,-}^{dd}$, $J$ has a minimal value point $y_j$ for $j\in\Lambda_2$, that is $J(y_j)=\inf\limits_{x\in D_{j,-}^{dd}}J(x)<0$. So, $y_j\neq 0$. Then we have
\[
( y_j, w) = \langle \mathbf{f}(y_j), w\rangle,\  \forall w \in D_{j, -}^{dd}.
\]
In a similar way as in (3.5), we have
\[
( y_j, v) = \langle \mathbf{f}(y_j), v\rangle,\  \forall v\in X.\no(3.6)
\]
This implies that $y_j\in K$ for $j\in\Lambda_2$. And so $J$ admits at least $2^{m_2}-1$ distinct nontrivial critical points such that each of them has the form
$\bigsqcup\limits_{j=1}^{m_2}(d_j y_{j})$, where  $d_j\in \{0,1\}$ for $j\in\Lambda_2$  with $0\neq (d_1, d_2,\cdots, d_{m_2})\in \mb R^{m_2}$.

At last we  assume that the conditions in 1) and 2) hold.  In a similar way as (3.5) we can show that $J$ admits at least $2^{m_1+m_2}-1$ distinct nontrivial critical points each of them has the form
\[\left(\bigsqcup\limits_{i=1}^{m_1}(b_i x_{i})\right)\bigsqcup\left(\bigsqcup\limits_{j=1}^{m_2}(d_jy_{j})\right),\]
where $b_i\in \{0,1\}$ for $i\in\Lambda_1$, $d_j\in \{0,1\}$ for $j\in\Lambda_2$  with $0\neq (b_1, b_2,\cdots, b_{m_1},d_1, d_2,\cdots, d_{m_2} )\in \mb R^{m_1+m_2}$.
\end{proof}

\begin{remark}In the PDE setting, $\left(\bigsqcup\limits_{i=1}^{m_1}(b_i x_{i})\right)\bigsqcup\left(\bigsqcup\limits_{j=1}^{m_2}(d_jy_{j})\right)
$ always be called a $s+t$-bump solution, where $s=\sum_{i=1}^{m_1}b_i$ and $t=\sum_{j=1}^{m_2}d_j$.\end{remark}

 Assume that there exists another Banach space $Z$ with the norm $\|\cdot\|_2$ such that $X\hookrightarrow Z\hookrightarrow E$. Assume that $\|u\|_2\ls c_1\|u\|$ for each $u\in X$ and some $c_1>0$. Let $P_0=P_1\cap Z$. Then $P_0$ is a cone in $Z$ which introduces an ordering $\ls$ in $Z$. Assume that $(Z,\ls)$ is a Banach lattice.

For any \(u\in K\setminus\{0\}\), we call \(u\) a positive critical point of $J$ when \(u\in P\), a negative critical point of $J$ when \(u\in -P\), and a sign-changing critical point of $J$ when \(u\notin P\cup(-P)\). Next we consider the existence of positive, negative and sign-changing critical points of $J$. For this purpose we introduce the following conditions:
\medskip
\begin{itemize}
\item[(H\(_4)\)]\  $P\cap D_{i,+}^{dd}\neq\ep$ and $\mathbf{f}(P\cap D_{i,+}^{dd})\subset P^*_1$ for each $i\in\Lambda_1$;

\item[(H\(_5)\)]\   There exists $g: \mb R_+\ri \mb R_+$ such that $ \langle \mathbf{f}(u), w\rangle\ls g(\|u\|_2)\|w\|_2$ for each $u,w\in D_{i,+}^{dd}$, where  $g(s)$ is nondecreasing in $s\in \mb R_+$ and $g(s)=o(s)$ as $s\ri 0^+$.
\end{itemize}
 \begin{remark} That $\mathbf{f}(P\cap D_{i,+}^{dd})\subset P^*_1$ is an order-preserving condition in $D_{i,+}^{dd}$.\end{remark}

\begin{theorem}\ Assume that $J$ satisfies the conditions (H\(_{1})\), (H\(_4)\) and (H\(_5)\). Then we have
\begin{itemize}
 \item [1)]\ If $J$ satisfies  condition (H$_2)$ and  the (P.S.) condition on each $D_{i,+}^{dd}$,  then $J$ has at least $2^{2m_1}-1$ nontrivial critical points. Among them there at least $2^{m_1}-1$ positive critical points, $2^{m_1}-1$ negative  critical points and ${m_1}$ sign-changing critical points;

     Moreover, if $J$ is of $C^2$,  $m_1=1$, $m_2=0$ and $K\bk(P\cup(-P))$ is a finite number set, then $J$ has at least one positive critical point $x_1$, one negative  critical point $y_1$ and one sign-changing critical point $z_1$ with $C_2(J, z_1)\neq 0$;

\item[2)] If $J$ satisfies condition $(\mathrm{H}_{3})$ and  the (P.S.) condition on each $D_{j,-}^{dd}$,  then $J$ has at least $2^{m_2}-1$ distinct critical points;

\item[3)] If $J$ satisfies both $(\mathrm{H}_{2})$ and $(\mathrm{H}_{3})$ and  the (P.S.) condition on each $D_{i,+}^{dd}$ and $D_{j,-}^{dd}$, then $J$ admits at least $2^{2m_1+m_2}-1$ critical points in total. Among them, there exist at least $2^{m_1}-1$ positive critical points, $2^{m_1}-1$ negative critical points and ${m_1}$ sign-changing critical points.
\end{itemize}
\end{theorem}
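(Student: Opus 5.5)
The plan is to work on each subspace $D_{i,+}^{dd}$ separately and find three critical points of $J|_{D_{i,+}^{dd}}$: a positive one $x_i$, a negative one $y_i$ and a sign-changing one $z_i$. These are then lifted to critical points of $J$ on $X$ exactly as in (3.5). Finally, disjoint ones are glued together using the orthogonal additivity and locality of $\mathbf{f}$ (Lemma 3.2), as in the proof of Theorem 3.1.

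\textbf{Step 1: local minimum and invariance of the cones.} Fix $i$. From (H$_5$) and $X\hookrightarrow Z$ I get $|F(u)|\ls \int_0^1 g(s\|u\|_2)\|u\|_2\,ds=o(\|u\|^2)$ on $D_{i,+}^{dd}$. Hence $J(u)\gl \frac14\|u\|^2$ near $0$, so $0$ is a strict local minimum and $J(u)\gl \frac14\rho^2$ on the sphere of radius $\rho$.

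Next I show that $A_i=A|_{D_{i,+}^{dd}}$ maps $P\cap D_{i,+}^{dd}$ into $P$, using (H$_4$). If I cannot get $A_i$ order preserving directly, I would instead use the vector field $u-A(u)$: the set $\pm P$ together with small $Z$-neighbourhoods of $\pm P$ should be positively invariant for the descending flow. (H$_5$) is used here to show that $\mathrm{dist}_Z(A u,\pm P)$ is much smaller than $\mathrm{dist}_Z(u,\pm P)$. This is the standard descending-flow-invariant-set construction of Liu–Sun / Bartsch–Liu.

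\textbf{Step 2: positive and negative critical points.} I use the mountain pass geometry on $D_{i,+}^{dd}\cap P$: the local minimum at $0$ from Step 1, together with $J(tu)\to-\infty$ from (H$_2$). Combined with the invariance of $P$ and (P.S.), this yields a critical point $x_i\in P\setminus\{0\}$. The same argument gives $y_i\in -P\setminus\{0\}$; the needed symmetry is provided by locality, since $\mathbf{f}$ is local on each band. By (3.5) both lift to $K$.

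\textbf{Step 3: sign-changing critical point.} I take a path in $D_{i,+}^{dd}$ joining a point of $P$ with very negative energy to a point of $-P$ with very negative energy; such points exist by (H$_2$). Every such path must cross the complement of the invariant neighbourhoods of $P\cup(-P)$. A minimax over these paths, restricted to the complement of the union of the two invariant neighbourhoods, then produces a critical point $z_i\notin P\cup(-P)$. I expect this to be the main obstacle, because it needs invariance of open neighbourhoods rather than of the cones themselves, and that requires the $Z$-estimate from (H$_5$).

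For the ``moreover'' part, with $m_1=1$ and $J\in C^2$, I would compute via the deformation lemma applied to these invariant sets that the critical groups of the nodal critical set are nontrivial in dimension $2$. This is a two-parameter minimax (a linking of $\pm P$ neighbourhoods with a $2$-disk), and the finiteness of $K\setminus(P\cup(-P))$ then yields a point $z_1$ with $C_2(J,z_1)\neq0$.

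\textbf{Step 4: counting and gluing.} Positive critical points belonging to different bands are disjoint, so they can be added as in Theorem 3.1, giving $2^{m_1}-1$ positive ones. The same holds for the negative ones. Choosing for each $i$ one of $x_i$, $y_i$, or $0$ gives $3^{m_1}-1$ combinations; using $z_i$ as well yields more, and the stated counts should follow from this choice-counting. The mixed sums of the form $x_i+y_k$ with $i\ne k$ are sign-changing, as are the $m_1$ points $z_i$.

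Parts 2) and 3) are proved exactly as in Theorem 3.1 2) and 3), adjoining the minimizers on the $D_{j,-}^{dd}$ given by (H$_3$) and gluing them by disjointness.
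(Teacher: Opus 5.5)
The genuine gap is in Step 3, which carries both the sign-changing part and the ``moreover'' part.

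Your claim that every path in $D_{i,+}^{dd}$ from a point of $P$ to a point of $-P$ must leave the union of the two invariant neighbourhoods is false. The neighbourhoods overlap: both contain $0$. The path that runs from $e_+$ to $0$ inside $P$ and then from $0$ to $e_-$ inside $-P$ never leaves $D_1\cup D_2$. For such a path the supremum of $J$ over $\gamma\setminus(D_1\cup D_2)$ is taken over the empty set, so your minimax value is $-\infty$ and yields nothing.

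Two ingredients are missing, and they are what the paper's argument rests on.
\begin{itemize}
\item[(i)] A lower bound for $J$ on $\overline{D}_1\cap\overline{D}_2$. This set is not a small ball in $X$, so your estimate near $0$ from Step 1 does not give it. The paper takes nearest points $w_1\in P$, $w_2\in -P$ of $u$ and uses $0\leqslant w_1\leqslant w_1-w_2$ in the Banach lattice $Z$. This gives $\|u\|_2\leqslant 3c_1\mu$, and then (H$_5$) gives $J(u)\geqslant -3c_1\mu\, g(3c_1\mu)$.
\item[(ii)] A path from $D_1\setminus\overline{D}_2$ to $D_2\setminus\overline{D}_1$ along which $J$ stays below this bound. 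This is the paper's $\gamma_2$, with $\max J(\gamma_2)<c_0$. Only such a path is forced, by connectedness, to leave $D_1\cup D_2$.
\end{itemize}

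The paper then pairs $\gamma_2$ with a path $\gamma_1\subset\overline{D}_1\cup\overline{D}_2$. By Mayer--Vietoris, the loop $\gamma_1\cup\gamma_2$ survives in $H_1(Q^{c'})$ for $c'<c_0$ but dies at high levels. Hence $H_2(Q^{c_1+\varepsilon},Q^{c_1-\varepsilon})\neq 0$. This produces $z_i\notin\overline{D}_1\cup\overline{D}_2$ and, after excision, $C_2(J,z_1)\neq 0$. Alternatively, one can use the Liu--Sun four-critical-points theorem, whose key hypothesis is exactly a path as in (ii).

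Ingredient (ii) does not follow from (H$_2$), which only controls $J$ along single rays. You need extra structure. For example, take disjoint nonzero $u_1\in P\cap D_{i,+}^{dd}$ and $u_2\in -P\cap D_{i,+}^{dd}$, and use orthogonal additivity of $J$ along the path $Ru_1\to Ru_1+Ru_2\to Ru_2$. If $\dim D_{i,+}^{dd}=1$, then $D_{i,+}^{dd}\subset P\cup(-P)$ and no $z_i$ can exist at all. The paper itself simply asserts that these paths exist ``by (H$_2$)''.

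Two smaller points.

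Your justification of the negative case in Step 2 does not work. Locality carries no sign information, and (H$_4$) concerns only $P$. Invariance of the neighbourhood of $-P$ needs $\mathbf{f}(-P\cap D_{i,+}^{dd})\subset -P_1^*$ as an additional assumption. The paper uses this tacitly in the ``Similarly'' of Lemma 3.4; in the application it comes from (A$_7$).

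Otherwise your outline is sound and partly differs from the paper. You run a mountain pass from the strict local minimum $0$ inside the invariant set $\overline{D}_1$. This replaces the paper's Liu--Sun argument with $V_1=\overline{D}_1\cap D_2$ and $\partial C_{\overline{D}_1}(V_1)$. It works because critical points in $\overline{D}_1$ lie in $P$ by the estimate of Lemma 3.4. Your counting is also fine once made precise: choosing for each $i$ one of $0,x_i,y_i,z_i$ gives exactly $4^{m_1}-1=2^{2m_1}-1$ distinct critical points, which is the paper's count.
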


 Until the proof of Theorem 3.2 below, we will let $i\in\Lambda_1$ be fixed.

\begin{lemma}\ Assume that  (H\(_{1})\),  (H\(_{4})\) and (H\(_{5})\) hold. For any given \(\mu> 0\), denote \[D^{\pm}_i(\mu)=\{x \in D^{dd}_{i,+}: \mbox{dist}\ \big(x, (\pm P)\cap D_{i, +}^{dd}\big)<\mu\}.\] There exists \(\mu_{0} > 0\) such that for any \(0 < \mu < \mu_{0}\), we have \[A_i\left(\mbox{Cl}_{D_{+,i}^{dd}}D_i^{\pm}(\mu)\right) \subset D_i^{\pm}(\frac{1}{2}\mu),\] where $\mbox{Cl}_{D_{i,+}^{dd}}D_i^{\pm}(\mu)$ denotes the closure of $D_i^{\pm}(\mu)$ in $D_{i,+}^{dd}$.
\end{lemma}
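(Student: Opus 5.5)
This follows the standard invariant-set argument for $A$ near the cones (cf. Liu–Sun and Bartsch–Liu), adapted to $D_{i,+}^{dd}$. We treat $D_i^+(\mu)$; the case $D_i^-(\mu)$ is symmetric, replacing $x$ by $-x$ and $P$ by $-P$. Write $P_i=P\cap D_{i,+}^{dd}$.

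\textbf{Step 1: reduce to the negative part.} For $x\in D_{i,+}^{dd}$ we estimate $\mbox{dist}(A_i x,P_i)$ by $\|(A_ix)^-\|$. Set $w=(A_ix)^-$, taken in $E$ (or in $Z$). Since $D_{i,+}^{dd}=G_{i,+}^{dd}\cap X$ and $G_{i,+}^{dd}$ is a band, $w$ lies in $G_{i,+}^{dd}$. We will assume that $X$ is stable under $u\mapsto u^{\pm}$, as in the PDE model, so that $w\in D_{i,+}^{dd}$. Then $A_ix+w=(A_ix)^+\in P_i$, which gives $\mbox{dist}(A_ix,P_i)\ls\|w\|$. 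Compatibility of disjointness and orthogonality in $X$ gives $(A_ix,w)=(-w,w)=-\|w\|^2$, because $(A_ix)^+\perp w$. Hence, by (3.2),
\[
\|w\|^2=-(A_ix,w)=-\langle \mathbf f(x),w\rangle .
\]

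\textbf{Step 2: split $x$ and use order-preservation and locality.} Fix $\varepsilon>0$ and choose $y\in P_i$ with $\|x-y\|<\mu+\varepsilon$. Using the locality and orthogonal additivity of $\mathbf f$, split $x=x^+-x^-$ inside the band, so that $\mathbf f(x)=\mathbf f(x^+)+\mathbf f(-x^-)$. By (H$_4$), $\langle\mathbf f(x^+),w\rangle\gl 0$ since $w\gl0$. Therefore
\[
\|w\|^2\ls-\langle\mathbf f(-x^-),w\rangle .
\]
By (H$_5$), applied to $-x^-$ and $w$, or to $-w$ if sign issues arise,
\[
\|w\|^2\ls g(\|x^-\|_2)\|w\|_2\ls c_1\, g(\|x^-\|_2)\|w\| .
\]

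\textbf{Step 3: bound $\|x^-\|_2$ by the distance.} Since $y\gl0$, we have $|x^-|\ls|x-y|$ in the lattice $Z$. The Riesz norm then gives $\|x^-\|_2\ls\|x-y\|_2\ls c_1(\mu+\varepsilon)$. Combining with Step 2 and using that $g$ is nondecreasing,
\[
\mbox{dist}(A_ix,P_i)\ls\|w\|\ls c_1 g\big(c_1(\mu+\varepsilon)\big).
\]
Letting $\varepsilon\to0$ and using continuity or right limits of $g$ (or simply taking $2\mu$ in place of $\mu+\varepsilon$), we obtain $\mbox{dist}(A_ix,P_i)\ls c_1 g(2c_1\mu)$. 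Because $g(s)=o(s)$, there is $\mu_0$ such that $c_1g(2c_1\mu)<\mu/2$ for $0<\mu<\mu_0$. This gives the inclusion for all points of $D_i^+(\mu)$. For the closure, a point $x$ with $\mbox{dist}(x,P_i)\ls\mu$ is handled by the same estimate, and the strict inequality still holds.

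\textbf{Main obstacle.} The delicate point is Step 1 and the splitting in Step 2. We need $(A_ix)^-$ and $x^{\pm}$ to lie in $X\cap G_{i,+}^{dd}$, not only in $E$, since $X$ is not assumed to be a lattice. We also need $(A_ix)^-$ to be an admissible test function in (H$_5$). If $X$ is not closed under $\pm$ parts, the first thing we would try is to carry out the computation with the Hilbert projection onto the closed convex set $P_i$ instead. We would then use the variational characterization $(A_ix-p,q-p)\ls0$ for all $q\in P_i$ in place of the lattice identity, testing with $q=p+w$, so that $w=A_ix-p$ plays the role of $-(A_ix)^-$.
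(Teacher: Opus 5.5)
Your argument for $D_i^+(\mu)$ follows the paper's proof step by step. You pass to the negative part of $v=A_i(x)$ and use compatibility in $X$ to get $\|v^-\|^2=-\langle\mathbf f(x),v^-\rangle$. You then split $x=x^+-x^-$ by orthogonal additivity, drop the $\mathbf f(x^+)$ term by (H$_4$), and bound the rest by (H$_5$) together with $\|\cdot\|_2\le c_1\|\cdot\|$. Finally, $x^-\le|x-y|$ for $y\ge0$ and the Riesz norm of $Z$ control $\|x^-\|_2$ by the distance. You make explicit that $X$ is closed under $u\mapsto u^{\pm}$. The paper uses this tacitly as well: it writes $\|v_-\|^2=(v_-,v_-)$ and applies (H$_5$) to $u_-,v_-\in D_{i,+}^{dd}$. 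So this is not a defect relative to the paper, and your projection fallback is not needed.

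The step that fails is the claim that the case $D_i^-(\mu)$ is symmetric. Replacing $x$ by $-x$ turns $A_i$ into $x\mapsto -A_i(-x)$, i.e.\ $\mathbf f$ into $\tilde{\mathbf f}(u)=-\mathbf f(-u)$. Locality, orthogonal additivity and (H$_5$) carry over to $\tilde{\mathbf f}$ (apply (H$_5$) to $-u,-w$). But (H$_4$) for $\tilde{\mathbf f}$ reads $\mathbf f\big((-P)\cap D_{i,+}^{dd}\big)\subset -P_1^*$, and this is not assumed. Concretely, in the minus case you need $\langle\mathbf f(-x^-),v^+\rangle\le0$ to discard the contribution of the large piece $-x^-$ of $x$, and nothing supplies it.

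Without such a condition the minus half of the statement is false. Take $\Omega\subset\mathbb{R}^3$, $X=H_0^1(\Omega)$, $E=L^2(\Omega)$, and $a\in H_0^1(\Omega)\cap C(\overline{\Omega})$ positive in $\Omega$. Let $m_1=1$, $m_2=0$, so $D_{1,+}^{dd}=X$, and set $\mathbf f(u)=a|u|^{5/2}$. Then (H$_1$) holds, (H$_4$) holds because $\mathbf f\ge0$, and (H$_5$) holds with $Z=L^{7/2}(\Omega)$ and $g(s)=\|a\|_\infty s^{5/2}$. For $0\ne\phi\ge0$ in $X$, the point $-t\phi$ lies in $-P$. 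However, $A_1(-t\phi)=t^{5/2}(-\Delta)^{-1}(a\phi^{5/2})$ is nonnegative and nonzero. Hence its distance to $-P$ is at least $t^{5/2}c_1^{-1}\|(-\Delta)^{-1}(a\phi^{5/2})\|_Z$, which tends to $\infty$ as $t\to\infty$.

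The paper's ``Similarly'' has exactly the same omission. You need to add the mirror sign condition $\mathbf f\big((-P)\cap D_{i,+}^{dd}\big)\subset -P_1^*$; in Section 4 it follows from (A$_7$). With it, your Steps 1--3 go through verbatim with $v^+$, $x^+$ and $y\le0$ in place of $v^-$, $x^-$ and $y\ge0$.
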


\begin{proof}\ For each $u\in E$, we write $u_+=u^+$ and $u_-=-u^-$.
Let $v=A_i(u)$ for each $u\in D_{i,+}^{dd}$.  Since $\mathbf{f}(P\cap D_{i,+}^{dd})\subset P^*_1$ and $v_-\in (-P)\subset (-P_1)$, if $u\in P\cap D_{i,+}^{dd}$ we have
\[\|v_-\|^2=( v_-, v_-)=( v, v_-) = \langle \mathbf{f}(u), v_-\rangle\ls 0,\]
and so $v_-=0$, $v=v_+\in P$. This implies that $A_i(P\cap D_{i,+}^{dd})\subset P\cap D_{i,+}^{dd}$.

By (H$_1)$ and (H$_5)$ we have for any $u\in \mbox{Cl}_{D_{i,+}^{dd}}D^+_i(\mu)$,
\begin{align*}
\|v_-\|^2&=( v_-, v_-)=( v, v_-)\\
  &= \langle \mathbf{f}(u), v_-\rangle=\langle \mathbf{f}(u_+)+\mathbf{f}(u_-), v_-\rangle\\
  &\ls \langle \mathbf{f}(u_-), v_-\rangle\ls  g(\|u_-\|_2)\|v_-\|_2\\
  &\ls  c_1g(\|u_-\|_2)\|v_-\|,
\end{align*}
and so $\|v_-\|\ls c_1g(\|u_-\|_2)$.  Since $v=v_++v_-$ and $v_+\in P\cap D_{i, +}^{dd}$, we have
\[\mbox{dist}\ (v, P)\ls \|v_-\|\ls c_1 g(\|u_-\|_2).\]
For each $w\in P$, we have $u-w\ls u$. It follows from Lemma 2.1 that $$|u_-|=u^-\ls (u-w)^-\ls |u-w|.$$   Since  $\|\cdot\|_2$ is a Riesz norm and $g: \mb R_+\ri \mb R_+$ is nondecreasing, we have
\[\mbox{dist}\ (v, P)\ls c_1g(\|u_-\|_2)\ls c_1 g(\|u-w\|_2)\ls c_1g(c_1\|u-w\|).\]
Consequently, we have $\mbox{dist}\ (v, P)\ls c_1 g(c_1\mbox{dist}\ (u, P))$. Since $\lim\limits_{s\ri 0^+}\frac{g(s)}{s}=0$, there exists \(\mu_{0}^+ > 0\) such that for any \(0 < \mu < \mu_{0}^+\), we have  \(A_i\left(\mbox{Cl}_{D_{i,+}^{dd}}D_i^{+}(\mu)\right) \subset D_i^{+}(\frac{1}{2}\mu)\).

Similarly, there exists \(\mu_{0}^- > 0\) such that for any \(0 < \mu < \mu_{0}^-\), we have \(A_i\left(\mbox{Cl}_{D_{i,+}^{dd}}D_i^{-}(\mu)\right)\subset D_i^{-}(\frac{1}{2}\mu)\). Let $\mu_0=\min\{\mu^+_0,\mu^-_0\}$. Then the conclusion holds.
\end{proof}

Let $\mu\in (0,\mu_0)$ be fixed. For brevity, we write $D_1=D^+_i(\mu)$ and $D_2=D_i^{-}(\mu)$.

\begin{lemma}\ Assume that  (H\(_{1})\),  (H\(_{4})\) and (H\(_{5})\) hold.  Denote \(\widetilde{D}_i=D^{dd}_{i,+}\backslash K\). There exists a locally Lipschitz mapping \(B:\widetilde{D}_i\to D^{dd}_{i,+}\) satisfying

(1)\  For any given \(\mu\in(0,\mu_{0})\), we have \(B\left(\mbox{Cl}_{D_{+,i}^{dd}}D_i^{\pm}(\mu)\setminus K\right) \subset D_i^{\pm}(\frac{1}{2}\mu)\);

(2)\ \(\frac{1}{2}\|u - A_i(u)\|\ls\|u - B(u)\|\ls 2\|u - A_i(u)\|, \forall u\in\widetilde{D}_i\);

(3)\ \[( J^{\prime}(u),u - B(u))\gl\frac{1}{2}\|u - A_i(u)\|^{2}, \forall u\in \widetilde{D}_i.\tag{3.7}\]
\end{lemma}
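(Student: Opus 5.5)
This is the standard construction of a locally Lipschitz perturbation of $A_i$ in the descending flow invariant set method (Bartsch--Liu, Liu--Sun). The idea is to approximate the continuous map $A_i$ on $\widetilde D_i$ by a locally Lipschitz map $B$, using a locally finite partition of unity. Three ingredients are used: the continuity of $A_i$, which follows from that of $\mathbf f$ via (3.2); the invariance given by Lemma 3.4; and convexity, so that averages of points in $D_i^{\pm}(\frac12\mu)$ stay there.

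\emph{Step 1: local choices.} Fix $u\in\widetilde D_i$. Then $\delta(u):=\|u-A_i(u)\|>0$, and by the identity $\|J'(u)\|=\|u-A(u)\|$ with $A_i=A|_{D^{dd}_{i,+}}$, this is the restricted gradient norm. By continuity of $A_i$, choose an open ball $N_u\subset\widetilde D_i$ (open because $K$ is closed) around $u$ such that for all $w\in N_u$:
\[
\|A_i(w)-A_i(u)\|<\tfrac14\delta(w)\quad\mbox{and}\quad \|w-u\|<\tfrac14\delta(w).
\]
This is possible because $\delta$ is continuous and positive near $u$, so it is bounded below by, say, $\frac12\delta(u)$ on a small ball. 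The collection $\{N_u\}$ covers the metric space $\widetilde D_i$, which is paracompact. Take a locally finite open refinement $\{V_\lambda\}$ and a subordinate locally Lipschitz partition of unity $\{\beta_\lambda\}$, for instance $\beta_\lambda=\rho_\lambda/\sum\rho$ with $\rho_\lambda=\mbox{dist}(\cdot,\complement V_\lambda)$. For each $\lambda$ pick $u_\lambda$ with $V_\lambda\subset N_{u_\lambda}$, and set
\[
B(w)=\sum_\lambda\beta_\lambda(w)A_i(u_\lambda).
\]
This map is locally Lipschitz, being a locally finite sum of locally Lipschitz functions times constant vectors.

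\emph{Step 2: estimates (2) and (3).} For $w\in V_\lambda$ we have $\|A_i(u_\lambda)-A_i(w)\|<\frac14\delta(w)$, and convexity gives $\|B(w)-A_i(w)\|\ls\frac14\delta(w)$. Estimate (2) then follows from the triangle inequality; in fact it holds with constants $\frac34$ and $\frac54$. For (3), note that on $D^{dd}_{i,+}$ the derivative pairing reduces to $(J'(w),h)=(w-A_i(w),h)$ for $h\in D^{dd}_{i,+}$. We compute
\[
(w-A_i(w),\,w-B(w))=\delta(w)^2+(w-A_i(w),A_i(w)-B(w))\gl\delta(w)^2-\tfrac14\delta(w)^2\gl\tfrac12\delta(w)^2.
\]

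\emph{Step 3: invariance (1).} This is the main obstacle, because $A_i(u_\lambda)$ need not lie in $D_i^{+}(\frac12\mu)$ just because $w$ does; it would if $u_\lambda$ were itself in $\mbox{Cl}D_i^+(\mu)$. I would fix this during the choice in Step 1, treating finitely many sign cases.
\begin{itemize}
\item If $u\in\mbox{Cl}D_i^{+}(\mu)$, keep $N_u$ as chosen.
\item If $u\notin \mbox{Cl}D_i^{+}(\mu)$, shrink $N_u$ so that it misses $\mbox{Cl}D_i^{+}(\mu)$, which is closed.
\item Do the same for $D_i^-$.
\end{itemize}
Then whenever $w\in\mbox{Cl}D_i^{\pm}(\mu)\setminus K$ and $\beta_\lambda(w)\ne0$, the point $w$ lies in $N_{u_\lambda}$. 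This forces $u_\lambda\in\mbox{Cl}D_i^{\pm}(\mu)$, and Lemma 3.4 gives $A_i(u_\lambda)\in D_i^{\pm}(\frac12\mu)$.

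Since $D_i^{\pm}(\frac12\mu)$ is a neighbourhood of the convex cone $(\pm P)\cap D^{dd}_{i,+}$, it is convex: the distance to a convex set is a convex function. Hence $B(w)$, a convex combination of such points, lies in $D_i^{\pm}(\frac12\mu)$, which proves (1).
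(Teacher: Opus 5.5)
Your proof is correct and uses the same partition-of-unity construction as the paper: $B$ is a locally finite convex combination of values of $A_i$ at sample points, (2)--(3) follow from the small oscillation of $A_i$ on the covering sets, and (1) follows from Lemma 3.4 together with convexity of $D_i^{\pm}(\frac12\mu)$ (for a fixed $\mu$, as in the paper). The one difference is in the invariance step: you take ball centers as sample points and shrink every ball not centered in $\mbox{Cl}_{D_{i,+}^{dd}}D_i^{\pm}(\mu)$ until it misses that set, so a ball meeting both closed sets is automatically centered in their intersection; this replaces the paper's second refinement $\mathcal N$ (splitting the sets in $\mathcal M^*$) and its prioritized choice of $a_U$, and it also gives the sharper constants $\frac34,\frac54$.
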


\begin{proof}\ For all $u \in \widetilde{D}_i$, $u \neq A_i(u)$, and thus $\|J'(u)\|=\|u - A_i(u)\| > 0$. We give the following definitions:
\[
\Delta_1(u) = \frac{1}{2}\|u - A_i(u)\| > 0.
\tag{3.8}
\]
Take $r(u) \in (0,1)$ such that for all $v, w \in N(u) = \{v \in \widetilde{D}_i : \|v - u\| < r(u)\}$,
\[
\|A_i(v) - A_i(w)\| < \min\left\{\Delta_1(v), \Delta_1(w)\right\}
\tag{3.9}
\]
holds.

By the paracompactness of the space $\widetilde{D}_i$, let $\mathcal{M}$ be a locally finite open cover of $\{N(u) : u \in \widetilde{D}_i\}$. Now we construct the required operator $B$ by further refining the open cover $\mathcal{M}$. For each $U \in \mathcal{M}$, define:
\[
\mathcal{M}^* = \{U \in \mathcal{M} : U \cap \ov{D}_1^{D_{i,+}^{dd}} \neq \emptyset,\ U \cap \ov{D}_2^{D_{i,+}^{dd}} \neq \emptyset,\ U \cap \ov{D}_1^{D_{i,+}^{dd}} \cap \ov{D}_2^{D_{i,+}^{dd}} = \emptyset\}.
\]
We define
\[
\mathcal{N} := \bigcup_{U \in \mathcal{M}\bk \mathcal M^*} \{U\} \cup \bigcup_{U \in \mathcal{M}^*} \{U \setminus \ov{D}_1^{D_{i,+}^{dd}}, U \setminus \ov{D}_2^{D_{i,+}^{dd}}\}.
\]
Then $\mathcal{N}$ is also a locally finite open refinement of the covering $\{N(u) : u \in \widetilde{D}_i\}$ of $\widetilde{D}_i$. Moreover, if $U \in \mathcal{N}$ satisfies $U \cap \ov{D}_1^{D_{i,+}^{dd}} \neq \emptyset$ and $U \cap \ov{D}_2^{D_{i,+}^{dd}} \neq \emptyset$, then necessarily $U \cap \ov{D}_1^{D_{i,+}^{dd}} \cap \ov{D}_2^{D_{i,+}^{dd}} \neq \emptyset$.

We now construct the operator $B$. Let $\{\beta_U : U \in \mathcal{N}\}$ be a partition of unity subordinate to $\mathcal{N}$, defined by
\[
\beta_U(u) = \left( \sum_{V \in \mathcal{N}} \alpha_V(u) \right)^{-1} \cdot \alpha_U(u), \quad \forall u \in \widetilde{D}_i,\ U \in \mathcal{N},
\]
where $\alpha_U(u) := \operatorname{dist}(u, D^{dd}_{i,+} \setminus U)$. Clearly, for all $u, v \in D^{dd}_{i,+}$, we have $\|\alpha_U(u) - \alpha_U(v)\| \ls \|u - v\|$. From the definition of $\beta_U(u)$, it follows that
\[
0 \ls \beta_U(u) \ls 1, \quad \sum_{U \in \mathcal{N}} \beta_U(u) = 1.
\]
Combined with the local finiteness of $\mathcal{N}$, this implies that $\beta_U(u)$ satisfies a local Lipschitz condition on $\widetilde{D}_i$.

For each $U \in \mathcal{N}$, we choose an appropriate point $a_U \in U$ as follows:
\begin{itemize}
    \item If $U \cap \ov{D}_1^{D_{i,+}^{dd}} \neq \emptyset$, take $a_U \in U \cap \ov{D}_1^{D_{i,+}^{dd}}$;
    \item If $U \cap \ov{D}_2^{D_{i,+}^{dd}} \neq \emptyset$, take $a_U \in U \cap \ov{D}_2^{D_{i,+}^{dd}}$;
    \item If $U \cap \ov{D}_1^{D_{i,+}^{dd}} \cap \ov{D}_2^{D_{i,+}^{dd}} \neq \emptyset$, take $a_U \in U \cap \ov{D}_1^{D_{i,+}^{dd}} \cap \ov{D}_2^{D_{i,+}^{dd}}$;
    \item Other cases take any  $a_U \in D_{i,+}^{dd}\cap U$.
\end{itemize}

We define the operator $B : \widetilde{D}_i \to D_{i,+}^{dd}$ by
\[
B(u) = \sum_{U \in \mathcal{N}} \beta_U(u) A_i(a_U), \quad \forall u \in \widetilde{D}_i.
\]
Since the partition of unity $\beta_U(u)$ is locally Lipschitz continuous and the operator $A_i$ is continuous, $B(u)$ is a locally finite convex combination. Thus, $B : \widetilde{D}_i \to D_{i,+}^{dd}$ is also locally Lipschitz. By the definition of $B(u)$ and Lemma 3.4, when $u \in  D_i^\pm(\mu)$, we have $A_i(a_U) \in  D_i^\pm(\frac{1}{2}\mu)$. Since $  D_i^\pm(\frac{1}{2}\mu)$ is convex, $B(u)$, being a convex combination of the $A_i(a_U)$, satisfies
\[
B(u) \in  D_i^\pm(\frac{1}{2}\mu).
\]
It follows that
\[
B\left(\ov{D}_1^{D_{i,+}^{dd}}\setminus K\right)  \subset D_i^+(\frac{1}{2}\mu), \quad B\left(\ov{D}_2^{D_{i,+}^{dd}}\setminus K\right)  \subset D_i^-(\frac{1}{2}\mu).
\]

Now, for any $u \in \widetilde{D}_i$, by the local finiteness of $\mathcal{N}$, only finitely many sets in $\mathcal{N}$ contain $u$. We denote these sets by $N_i$ ($i = 1, 2, \dots, n(u)$), so that $u \in N_i$. For each $i$, there exists $M_i \in \mathcal{M}$ such that $N_i \subset M_i$, and hence $a_{N_i} \in N_i \subset M_i$ and $u \in N_i \subset M_i$. From this analysis, it follows that for every $u \in \widetilde{D}_i$, satisfies
\begin{align*}
\|B(u) - A_i(u)\| &= \left\| \sum_{U \in \mathcal{N}} \beta_U(u)A_i(a_U) - \sum_{U \in \mathcal{N}} \beta_U(u)A_i(u) \right\| \\
&\ls \sum_{U \in \mathcal{N}} \beta_U(u)\|A_i(a_U) - A_i(u)\|.
\end{align*}
From (3.7), (3.8), (3.9), for all $u \in \widetilde{D}_i$, the following  inequality holds:
\[
\|B(u) - A_i(u)\| \ls \frac{1}{2}\|u - A_i(u)\|.
\]
Therefore,
\[
\|u - B(u)\| \ls \|u - A_i(u)\| + \|A_i(u) - B(u)\| \ls \frac{3}{2}\|u - A_i(u)\|,
\]
\[
\|u - A_i(u)\| \ls \|u - B(u)\| + \|B(u) - A_i(u)\| \ls \|u - B(u)\| + \frac{1}{2}\|u - A_i(u)\|.
\]
In conclusion, for any $u \in \widetilde{D}_i$, we have
\[
\frac{1}{2}\|u - A_i(u)\| \ls \|u - B(u)\| \ls 2\|u - A_i(u)\|.
\]
On the other hand,
\begin{align*}
( J'(u), u - B(u) ) &=  \Big( J'(u), u - \sum_{i=1}^{n(u)} \beta_{N_i}(u)A_i(a_{N_i}) \Big) \\
&= \sum_{i=1}^{n(u)} \beta_{N_i}(u)( J'(u), u - A_i(a_{N_i}) ) \\
&= \sum_{i=1}^{n(u)} \beta_{N_i}(u)\left[ (J'(u) , u - A_i(u) ) + ( J'(u), A_i(u) - A_i(a_{N_i}) ) \right] \\
&\gl \sum_{i=1}^{n(u)} \beta_{N_i}(u)\left[\| u - A_i(u)\|^2 - \|J'(u)\|\cdot\|A_i(u) - A_i(a_{N_i})\| \right].  \\
\end{align*}
From (3.8), we have
\[
\|J'(u)\| \cdot \|A_i(u) - A_i(a_{N_i})\| \ls\frac{1}{2}  \|u - A_i(u)\|^2.
\]
Thus, (3.7) holds. In conclusion, we have proved that operator \( B \) satisfies properties (1)-(3) in the lemma, and the proof is complete.
\end{proof}
\medskip

 Next, we consider the following initial value problem in Banach space.
\[\left\{\be{ll}
\di\frac{d\si(t)}{dt}=B(\si(t)) - \si(t)\\
\si(0)=x_{0}\in\widetilde{D}_i.
\en\right.
\no(3.10)\]
Since \(W:=I - B\) is locally Lipschitz continuous, according to the theory of ordinary differential equations in Banach space, \((3.10)\) has a right saturated solution \(\si^t(x_{0})\in D_{i,+}^{dd}\) for \(t\in[0, T(x_{0}))\), where \(0<T(x_{0})\ls+\infty\).
For any given \(t\in[0,T(x_{0}))\), we have
\[\be{ll}\di\frac{dJ(\si^t(x_0))}{dt}&=\Big( J^{\prime}(\si^t(x_0)),\di\frac{d\si^t(x_0)}{dt}\Big)\\
&=-( J^{\prime}(\si^t(x_0)),\si^t(x_0) -B(\si^t(x_0)))\\
&\ls-\frac{1}{2}\|\si^t(x_0) - A_i(\si^t(x_0))\|^{2}<0.
\en\no(3.11)\]
This shows that \(J(\si^t(x_0))\) is decreasing with respect to \(t\in[0,T(x_{0}))\). Hence, as in \cite{s19} we have the following Definition 3.2 and 3.3.

\begin{definition}\
Let \(D\subset D_{i,+}^{dd}\). If for any given \(x_{0}\in D\backslash K\), we have
\[\{\si^t(x_0):t\in[0,T(x_{0}))\}\subset D,\]
then \(D\) is called a  invariant set of descending flow of (3.10).
\end{definition}

\begin{definition}
Let \( M \) and \( D \) be  invariant sets of the descending flow of \((3.10)\), with \( D \subset M \). Define
\[
C_M(D) = \left\{ x_0: x_0 \in D, \text{ or } x_0 \in M \setminus D \text{ and there exists } t' \in [0, T(x_0)) \text{ such that } \si^{t'}( x_0) \in D \right\}.
\]
If \( C_M(D) = D \), then \( D \) is called a complete invariant set of the descending flow relative to \( M \).
\end{definition}
By Theorem 2.1 and 2.2 in \cite{s19}, we have the following  Lemma 3.6 and 3.7.
\begin{lemma}\  Let $D$ be a closed invariant set of the descending flow of  (3.10). Assume that $J$ satisfies the (P.S.) condition on $D$ and $c:=\inf_{u\in D}J(u)>-\infty$. Then $c$ is a critical value of $J$. Moreover, there exists $x_0\in D\cap K$ such that $J(x_0)=c$.
\end{lemma}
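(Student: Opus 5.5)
Assume, to the contrary, that $K\cap D$ contains no point at level $c$. The plan is a deformation-type contradiction built from the descending flow (3.10) and the estimates (3.7) and (3.11) of Lemma 3.5.

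\textbf{Step 1: a uniform gradient bound near level $c$.} I claim there exist $\varepsilon>0$ and $\delta>0$ such that
\[
\|u-A_i(u)\|\gl\delta\quad\text{for all } u\in D \text{ with } J(u)\ls c+\varepsilon .
\]
If this failed, we would have $u_n\in D$ with $J(u_n)\to c$ and $J'(u_n)\to 0$. Recall that $\|J'(u)\|=\|u-A(u)\|$ and that $A_i=A|_{D_{i,+}^{dd}}$; for $u\in D_{i,+}^{dd}$, the locality argument in (3.5) also gives $A(u)\in D_{i,+}^{dd}$, so this is the same quantity as $\|u-A_i(u)\|$. The (P.S.) condition on $D$ then yields a subsequence $u_n\to x_0$. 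Since $D$ is closed, $x_0\in D$; by continuity of $J$ and $J'$, we get $J(x_0)=c$ and $x_0\in K$. This contradicts the assumption.

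\textbf{Step 2: contradiction via the flow.} Pick $x_0\in D$ with $J(x_0)<c+\varepsilon$. Then $x_0\notin K$, since otherwise Step 1 would be violated. By invariance of $D$, the orbit $\sigma^t(x_0)$ stays in $D$, so $J(\sigma^t(x_0))\gl c$. By (3.11), $J$ decreases along the orbit, so the whole orbit lies in $\{J\ls c+\varepsilon\}\cap D$. Using (3.11) and Step 1,
\[
\frac{d}{dt}J(\sigma^t(x_0))\ls -\tfrac12\delta^2 ,
\]
hence $c\ls J(\sigma^t(x_0))\ls c+\varepsilon-\tfrac12\delta^2 t$. This forces $T(x_0)\ls 2\varepsilon/\delta^2<\infty$.

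\textbf{Step 3: the finite-time exit (main obstacle).} I must rule out blow-up at a finite time $T(x_0)$. By Lemma 3.5(2),
\[
\|\dot\sigma\|=\|\sigma-B(\sigma)\|\ls 2\|\sigma-A_i(\sigma)\| .
\]
Combining this with (3.11) gives
\[
\|\dot\sigma\|\ls 2\|\sigma-A_i\sigma\| \ls \frac{4}{\delta}\cdot\tfrac12\|\sigma-A_i\sigma\|^2\ls-\frac{4}{\delta}\,\frac{d}{dt}J(\sigma^t(x_0)).
\]
So the length of the orbit is at most $4\varepsilon/\delta$, and $\sigma^t(x_0)$ is Cauchy as $t\to T(x_0)$. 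Let $\sigma^*$ be the limit. It lies in $D$ because $D$ is closed, and it satisfies $J(\sigma^*)\ls c+\varepsilon$, so $\sigma^*\notin K$ by Step 1. Then $\sigma^*\in\widetilde D_i$, where $B$ is locally Lipschitz, so the solution could be continued past $T(x_0)$. This contradicts the saturation of $\sigma^t(x_0)$.

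Therefore $K\cap D\cap J^{-1}(c)\neq\emptyset$: there is $x_0\in D\cap K$ with $J(x_0)=c$, so $c$ is a critical value.

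One point needs care. The ODE (3.10) lives on $\widetilde D_i$ as an open subset of $D^{dd}_{i,+}$, so the continuation argument requires $D\subset D^{dd}_{i,+}$ and $K$ closed. Both hold, since $J'$ is continuous.
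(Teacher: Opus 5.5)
Your proof is correct. The paper gives no argument of its own for this lemma; it simply imports Theorem 2.1 of Liu--Sun \cite{s19}. What you have written is a self-contained version of the descending-flow deformation argument behind that citation:
\begin{itemize}
\item (P.S.) on the closed set $D$ gives a uniform bound $\|u-A_i(u)\|\gl\delta$ on $D\cap J^{c+\va}$.
\item (3.11) then forces $T(x_0)\ls 2\va/\delta^2$.
\item Lemma 3.5(2) gives the length estimate $\|\dot\si\|\ls-\frac{4}{\delta}\frac{d}{dt}J(\si^t(x_0))$. This makes the orbit converge to a point of $D\setminus K$, which contradicts saturation.
\end{itemize}

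Compared with the bare citation, your version checks that the cited result really transfers to this setting. Here the flow lives only on $\widetilde D_i\subset D^{dd}_{i,+}$, and (P.S.) is assumed only on $D$. Your argument uses exactly the following:
\begin{itemize}
\item the inequalities (3.7)/(3.11);
\item the bound $\|u-B(u)\|\ls 2\|u-A_i(u)\|$;
\item closedness of $K$, so that $\widetilde D_i$ is open in $D^{dd}_{i,+}$ and continuation is possible;
\item the identity $A=A_i$ on $D^{dd}_{i,+}$, so that $\|J'(u)\|$ coincides with the quantity $\|u-A_i(u)\|$ controlled along the flow.
\end{itemize}
All of these hold here. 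The only tacit assumption is $D\neq\ep$, which the statement itself also needs for $c$ to be finite.
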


\begin{lemma}\ Let $G\subset E$  be a connected and invariant set of (3.10), and $D$ be an open invariant subset of $G$. Then the following assertions hold:
\\
  1)\ $C_G(D)$ is  an open subset of $G$;\\
2)\ $\pa_G C_G(D)$ is  an invariant set of descending flow of (3.10);\\
 3)\ $\inf\limits_{u\in \pa_G C_G(D)} J(u)\gl \inf\limits_{u\in \pa_G D} J(u)$.
\end{lemma}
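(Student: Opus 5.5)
Lemma 3.7 is the standard result on the relative invariant hull of an open invariant set, Theorem 2.2 of \cite{s19}. I would prove it directly from the continuous dependence of the flow of (3.10) on initial data and from the monotonicity (3.11). Throughout, $\sigma^t(x_0)$ denotes the saturated solution defined on $[0,T(x_0))$. Since $W=I-B$ is locally Lipschitz on $\widetilde D_i$, the map $(t,x_0)\mapsto \sigma^t(x_0)$ has open domain, and it is continuous there. We also use that critical points are fixed by the flow, in the convention of Definition 3.2: for $x_0\in K$ the orbit is taken to be $\{x_0\}$.

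\emph{Step 1 (openness).} Let $x_0\in C_G(D)$. If $x_0\in D$, then, $D$ being open in $G$, some neighborhood of $x_0$ in $G$ already lies in $D\subset C_G(D)$. Otherwise there is $t'<T(x_0)$ with $\sigma^{t'}(x_0)\in D$. By continuous dependence, for $y\in G$ close to $x_0$ we have $T(y)>t'$ and $\sigma^{t'}(y)$ close to $\sigma^{t'}(x_0)$. Since $G$ is invariant, $\sigma^{t'}(y)\in G$, and since $D$ is open in $G$, $\sigma^{t'}(y)\in D$. Hence $y\in C_G(D)$, which proves 1). A side remark: $C_G(D)$ is itself invariant, because forward orbits of points of $D$ stay in $D$.

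\emph{Step 2 (invariance of the boundary).} Let $x_0\in\partial_G C_G(D)$ be a noncritical point. Because $C_G(D)$ is open, $x_0\notin C_G(D)$, so the orbit of $x_0$ never meets $D$. Suppose $\sigma^{s}(x_0)\notin \partial_G C_G(D)$ for some $s$. The point $\sigma^s(x_0)$ lies in $G$ by invariance and not in $C_G(D)$ (its orbit avoids $D$). So $\sigma^s(x_0)$ has a $G$-neighborhood $V$ disjoint from $C_G(D)$. By continuity, points $y$ near $x_0$ satisfy $\sigma^s(y)\in V$. But $x_0$ is a limit of points $y\in C_G(D)$, and $C_G(D)$ is invariant, so $\sigma^s(y)\in C_G(D)$ for such $y$, a contradiction. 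Hence $\partial_G C_G(D)$ is invariant, which is 2).

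\emph{Step 3 (the energy estimate).} This is the main obstacle. Take $x_0\in\partial_G C_G(D)$, choose $y_n\in C_G(D)$ with $y_n\to x_0$, and let $t_n$ be the first time at which $\sigma^{t_n}(y_n)\in \overline D$. Since $y_n\notin D$, or more generally since the orbit enters the open set $D$ from outside, the point $\sigma^{t_n}(y_n)$ lies in $\partial_G D$ whenever $y_n\notin D$. By (3.11), $J(y_n)\ge J(\sigma^{t_n}(y_n))\ge \inf_{\partial_G D}J$. If instead infinitely many $y_n$ lie in $D$, then $x_0\in\overline D$. Because $x_0\notin D$, this gives $x_0\in\partial_G D$ directly. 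In either case, continuity of $J$ yields $J(x_0)\ge\inf_{\partial_G D}J$, which is 3).

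The delicate point is the first-entry argument. It uses that $G$ is connected, so that entering $D$ forces the orbit to cross $\partial_G D$. It also uses that the orbit is continuous in $t$, so that the first time of entry into $\overline D$ exists and the point reached there lies in the boundary of $D$ relative to $G$. I would write that step out carefully.
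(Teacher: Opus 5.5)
Your proof is correct. The paper gives no argument of its own for Lemma 3.7; it simply invokes Theorem 2.2 of \cite{s19}. Your three steps are essentially the standard direct proof of that cited result: continuous dependence for openness, invariance of $C_G(D)$ plus a contradiction for the boundary, and a first-entry time combined with the monotonicity (3.11) for the energy bound.

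One small correction to your closing remark: connectedness of $G$ plays no role in 1)--3). The crossing of $\partial_G D$ comes from continuity of $t\mapsto\sigma^t(y_n)$ together with openness of $D$ in $G$, which your first-entry argument already supplies. Connectedness only matters later, in the proof of Theorem 3.2, to ensure $\partial_G C_G(D)\neq\emptyset$ when $\emptyset\neq C_G(D)\neq G$.
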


Let $\mu_0$ be given as in Lemma 3.4 and $\mu\in (0,\mu_0]$.  For any \( c \in \mathbb{R} \), define:
\[
Q^c = (J^c \cap D_{i,+}^{dd}) \cup \overline{D}_1^{D_{i,+}^{dd}} \cup \overline{D}_2^{D_{i,+}^{dd}}, \quad K_c^* = (K_c \cap D_{i,+}^{dd}) \setminus (\overline{D}_1^{D_{i,+}^{dd}} \cup \overline{D}_2^{D_{i,+}^{dd}}),
\]
\[
Q_c^* = Q^c \setminus K_c^*.
\]
For \( c, d \in \mathbb{R} \) with \( c < d \), define:
\[
M_c^d = Q_d^* \setminus Q^c = (J^d \cap D_{i,+}^{dd}) \setminus \left( J^c \cup K_d \cup \overline{D}_1^{D_{i,+}^{dd}} \cup \overline{D}_2^{D_{i,+}^{dd}} \right).
\]

Assume that $M_c^d\cap K=\ep$. For any \( x_0 \in M_c^d \), consider the following initial value problem:
\[
\begin{cases}
\frac{d \psi(t)}{dt} = -\frac{W(\psi(t))}{\| J'(\psi(t)) \|^2}, \\
\psi(0) = x_0\in\widetilde{D}_i.
\end{cases} \tag{3.12}
\]
Denote the  right saturated solution  as \(\psi^t(x_{0})\in D_{i,+}^{dd}\) for \(t\in[0, T_1(x_{0}))\), where \(0<T_1(x_{0})\ls+\infty\). Let
\[
\psi^s(x_0) =  \sigma^t( x_0), \quad s = \int_0^t \| J'(\sigma^\tau(x_0)) \|^2 d\tau.
\]
Then \( \psi^s(x_0) \) is a reparametrization of \( \si^t(x_0) \). In fact, for $s\in [0, T_1(x_0))$ we have
\[
\frac{\partial \psi^s(x_0)}{\partial s} = \frac{\partial \sigma^t(x_0)}{\partial t} \bigg/ \frac{ds}{dt} = -\frac{W(\sigma^t(x_0))}{\| J'(\sigma^t(x_0)) \|^2} = -\frac{W(\psi^s(x_0))}{\| J'(\psi^s(x_0)) \|^2}.
\]
By the uniqueness of the solution to (3.12), \( \psi^s(x_0) \) is the solution to (3.12).

Next, we define
\[
\tau(x_0) = \sup\left\{ \tau: 0 < \tau < T(x_0),\ \si^t( x_0) \in M_c^d \text{ for } 0 \ls t \ls \tau \right\},
\]
\[
\eta(x_0) = \int_0^{\tau(x_0)} \| J'(\sigma^t(x_0)) \|^2 dt,
\]
i.e., \( \tau(x_0) \) is the maximal time that \( \sigma^t(x_0) \) stays in \( M_c^d \), and \( \eta(x_0) \) is its reparametrization.

\begin{lemma}\cite{s21}
Let $C$ be a convex set of $X$. Then for all $x\in\mbox{int}\ C$ and $y\in C$, $\al x+(1-\al)y\in \mbox{int}\ C$  for all $\al\in (0,1]$.
\end{lemma}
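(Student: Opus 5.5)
The plan is to prove the statement in two stages: first show the claim with $\alpha$ equal to $1$, then reduce the general case $\alpha\in(0,1)$ to a homothety argument. For $\alpha=1$ the point $\alpha x+(1-\alpha)y$ is just $x$, which lies in $\mbox{int}\, C$ by assumption, so nothing needs to be done. So fix $\alpha\in(0,1)$, $x\in\mbox{int}\, C$ and $y\in C$, and set $z=\alpha x+(1-\alpha)y$.

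Since $x$ is an interior point, we can choose $r>0$ with $B(x,r)\subset C$, where $B(x,r)$ denotes the open ball in $X$. The goal is to exhibit an explicit ball around $z$ contained in $C$. The natural candidate is $B(z,\alpha r)$. Take any $w$ with $\|w-z\|<\alpha r$. We want to write $w$ as a convex combination of $y$ and a point of $B(x,r)$ with the same weights $(1-\alpha)$ and $\alpha$. Define
\[
x'=\frac{w-(1-\alpha)y}{\alpha},
\]
so that $w=\alpha x'+(1-\alpha)y$ identically.

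The key estimate is then
\[
\|x'-x\|=\frac{1}{\alpha}\big\|w-(1-\alpha)y-\alpha x\big\|=\frac{1}{\alpha}\|w-z\|<r,
\]
so $x'\in B(x,r)\subset C$. By convexity of $C$, $w=\alpha x'+(1-\alpha)y\in C$. Hence $B(z,\alpha r)\subset C$, i.e. $z\in\mbox{int}\, C$.

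There is essentially no obstacle here; the only point requiring care is choosing the radius $\alpha r$ (which is where $\alpha>0$ is used) and checking that the translation-and-scaling map $w\mapsto x'$ sends $B(z,\alpha r)$ into $B(x,r)$. No completeness or Hilbert structure of $X$ is needed, only that $X$ is a normed linear space; in the intended use $C$ will be the convex sets $D_i^{\pm}(\mu)$ inside $D_{i,+}^{dd}$, and the same argument applies verbatim with interiors taken relative to $D_{i,+}^{dd}$.
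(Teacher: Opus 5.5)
Your proof is correct: $B(z,\al r)$ is the image of $B(x,r)$ under the homothety $x'\mapsto \al x'+(1-\al)y$, so convexity of $C$ places the whole ball inside $C$. The paper does not prove this lemma itself but only cites Bauschke--Combettes, where the result is obtained by this same homothety argument; it needs only a normed-space structure, so it also holds with interiors taken relative to $D_{i,+}^{dd}$.
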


\begin{lemma} Assume that  (H\(_{1})\),  (H\(_{4})\) and (H\(_{5})\) hold.
$\overline{D}_1^{D_{i,+}^{dd}}$,   $\overline{D}_2^{D_{i,+}^{dd}}$ and $\overline{D}_1^{D_{i,+}^{dd}}\cap \overline{D}_2^{D_{i,+}^{dd}}$ are invariant sets of (3.10) and (3.12).
\end{lemma}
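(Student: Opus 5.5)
The plan is to use the standard ``convex combination'' mechanism for descending flows. The key ingredients are the convexity of $D_i^{\pm}(\mu)$ and of their closures in $D_{i,+}^{dd}$. Since $(\pm P)\cap D_{i,+}^{dd}$ is a convex cone, the distance function to it is convex, so $D_i^{\pm}(\mu)$ and its closure $\ov{D}_1^{D_{i,+}^{dd}}$, $\ov{D}_2^{D_{i,+}^{dd}}$ are convex, and $D_i^{\pm}(\mu)$ is open in $D_{i,+}^{dd}$. By Lemma 3.5(1), for $x\in \ov{D}_1^{D_{i,+}^{dd}}\setminus K$ we have $B(x)\in D_i^{+}(\frac12\mu)\subset D_1$, i.e.\ $B(x)\in \mbox{int}\,\ov{D}_1^{D_{i,+}^{dd}}$.

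Fix $x_0\in \ov{D}_1^{D_{i,+}^{dd}}\setminus K$ and let $\si^t=\si^t(x_0)$ on $[0,T(x_0))$. By variation of constants applied to (3.10),
\[
\si^t=e^{-t}x_0+\int_0^t e^{-(t-s)}B(\si^s)\,ds .
\]
I argue by contradiction. Suppose $t_1:=\inf\{t: \si^t\notin \ov{D}_1^{D_{i,+}^{dd}}\}<T(x_0)$. By closedness, $\si^{t_1}\in\ov{D}_1^{D_{i,+}^{dd}}$, and $\si^{t_1}\notin K$ because $\si^t$ stays in $\widetilde{D}_i$ (a trajectory of a locally Lipschitz field cannot reach a point of $K$ in finite time, and indeed $J$ strictly decreases along the flow by (3.11)). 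Continuity of $B$ and openness of $D_i^{+}(\frac12\mu)$, hmm, more directly openness of $D_1$, give $\delta>0$ with $B(\si^s)\in D_1$ for $s\in[t_1,t_1+\delta]$. Then for $h\in(0,\delta]$,
\[
\si^{t_1+h}=e^{-h}\si^{t_1}+(1-e^{-h})\,\frac{\int_{t_1}^{t_1+h}e^{-(t_1+h-s)}B(\si^s)ds}{1-e^{-h}} .
\]
The second factor is an average of points of the convex set $D_1$, so it lies in $\ov{D}_1^{D_{i,+}^{dd}}$. In fact it lies in $D_1$ itself: the average of points $B(\si^s)$, each within distance $<\frac12\mu$ of the cone, has distance $\le\frac12\mu$ to the cone, by convexity of the distance function. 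Lemma 3.8, with $C=\ov{D}_1^{D_{i,+}^{dd}}$, the interior point being this average and $y=\si^{t_1}$, then puts $\si^{t_1+h}$ in $\mbox{int}\,C\subset C$ for all small $h$. This contradicts the definition of $t_1$, so $\ov{D}_1^{D_{i,+}^{dd}}$ is invariant. The same argument works verbatim for $\ov{D}_2^{D_{i,+}^{dd}}$.

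For the intersection there is nothing new to prove. If $x_0\in \ov{D}_1^{D_{i,+}^{dd}}\cap\ov{D}_2^{D_{i,+}^{dd}}$, the trajectory stays in each set separately, hence in their intersection.

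For (3.12), note that $\psi^s(x_0)$ is a reparametrization of $\si^t(x_0)$ by the increasing change of variables $s=\int_0^t\|J'(\si^\tau)\|^2d\tau$. The two orbits are therefore the same set, and invariance transfers immediately.

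The main obstacle is the interior-point step: showing that the averaged term is a genuine interior point of the closure relative to $D_{i,+}^{dd}$, and that the orbit never meets $K$. The first is handled by the strict inclusion into $D_i^{\pm}(\frac12\mu)$ together with convexity of $\mbox{dist}(\cdot,(\pm P)\cap D_{i,+}^{dd})$. The second comes from uniqueness of solutions of (3.10) combined with the strict monotonicity (3.11).
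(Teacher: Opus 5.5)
Your proof is correct, but it takes a different route from the paper. The paper checks only a tangency condition at boundary points. For $u\in\partial_{D_{i,+}^{dd}}D_1$ it observes that $u+\lambda(-W(u))=\lambda B(u)+(1-\lambda)u\in D_1$, using Lemma 3.5(1) and Lemma 3.8. It then cites the Brezis--Martin flow-invariance theorem \cite{s22} to get invariance of the closed sets under (3.10) and (3.12), and handles $\overline{D}_2^{D_{i,+}^{dd}}$ and the intersection with ``similarly''.

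You instead prove the invariance directly from the variation-of-constants formula $\sigma^t=e^{-t}x_0+\int_0^t e^{-(t-s)}B(\sigma^s)\,ds$ and a first-exit-time contradiction. In effect you re-derive the special case of Brezis--Martin needed for fields of the form $B-I$. The paper's route is shorter, but it relies on an external theorem whose hypotheses it does not spell out. Yours is self-contained and shows that only two facts are used: the closed convexity of $\overline{D}_1^{D_{i,+}^{dd}}$ and the inclusion $B(\overline{D}_1^{D_{i,+}^{dd}}\setminus K)\subset D_1$. It also produces exactly the formula the paper later needs in case (1) of Lemma 3.10. Your treatment of the intersection (an intersection of forward-invariant sets is forward invariant) and of (3.12) (a positive time change of (3.10), so the orbits coincide) is more explicit than the paper's.

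Three small points.

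\begin{itemize}
\item The interior-point step via Lemma 3.8 is not needed. Once $B(\sigma^s)\in D_1$ for $s\in[t_1,t_1+\delta]$, the weighted average $m_h$ lies in the closed convex set $\overline{D}_1^{D_{i,+}^{dd}}$. Hence so does $\sigma^{t_1+h}=e^{-h}\sigma^{t_1}+(1-e^{-h})m_h$, which already contradicts the definition of $t_1$. Lemma 3.8 matters only for invariance of the open sets $D_1,D_2$, as in the remark after Lemma 3.10.
\item If you keep that step, the bound $\mathrm{dist}(B(\sigma^s),P\cap D_{i,+}^{dd})<\frac12\mu$ for $s>t_1$ cannot come from Lemma 3.5(1). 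That lemma requires $\sigma^s\in\overline{D}_1^{D_{i,+}^{dd}}$, which is what you are proving. You must choose $\delta$ using openness of $D_i^+(\frac12\mu)$ and continuity at $t_1$ (your first instinct), not openness of $D_1$.
\item $\sigma^{t_1}\notin K$ needs no appeal to the decrease of $J$. The vector field is defined only on $\widetilde{D}_i$, so the solution lies in $\widetilde{D}_i$ on all of $[0,T(x_0))$, and $t_1<T(x_0)$.
\end{itemize}
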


\begin{proof}
For $u\in \partial_{D_{i,+}^{dd}} D_1$, it follows from Lemma 3.5 and 3.8 that for $\la>0$ small enough,
$$ u+\la (-W(u))
=\la B(u)+(1-\la) u\in  D_1.
$$
  It follows from the theorem due to Brezis-Martin (see \cite{s22}) that $\overline{D}_1^{D_{i,+}^{dd}}$ is an invariant sets of descending flow of (3.10) and (3.12).
  Similarly, $\overline{D}_2^{D_{i,+}^{dd}}$ and $\overline{D}_1^{D_{i,+}^{dd}}\cap \overline{D}_2^{D_{i,+}^{dd}}$  are invariant sets of (3.10) and (3.12).
 \end{proof}

\begin{lemma} Assume that   (H\(_{1})\),(H\(_{4})\) and (H\(_{5})\) hold,  $M_c^d\cap K=\ep$.  Then
\( \eta: M_c^d \to \mathbb{R}_+ \) is continuous.
\end{lemma}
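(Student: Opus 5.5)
We want continuity of $\eta(x_0)=\int_0^{\tau(x_0)}\|J'(\si^t(x_0))\|^2dt$ on $M_c^d$. Since $\frac{d}{dt}J(\si^t(x_0))=-(J'(\si^t(x_0)),W(\si^t(x_0)))$, the reparametrized flow $\psi^s$ of (3.12) satisfies
\[
\frac{d}{ds}J(\psi^s(x_0))=-\frac{(J'(\psi^s(x_0)),W(\psi^s(x_0)))}{\|J'(\psi^s(x_0))\|^2}.
\]
By (3.7) and Lemma 3.5(2), this quantity lies between $-4$ and $-\frac12$. Indeed, $(J',W)\gl \frac12\|J'\|^2$, and $(J',W)\ls \|J'\|\,\|W\|\ls 2\|J'\|^2$. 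Hence along $\psi^s$ the energy decreases at a rate bounded above and below. In particular, $\eta(x_0)\ls 2(J(x_0)-c)\ls 2(d-c)$, so $\eta$ is finite, and exit from $M_c^d$ happens at a finite $s$.

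The plan is to show that $\eta(x_0)$ is characterized as the first time $s$ at which $\psi^s(x_0)$ reaches either the level set $\{J=c\}$ or the set $\ov D_1^{D_{i,+}^{dd}}\cup\ov D_2^{D_{i,+}^{dd}}$. Points of $K_d$ cannot be reached, since $M_c^d\cap K=\ep$ and the flow stays in $J^d$. We then prove upper and lower semicontinuity separately, using continuous dependence of $\psi^s$ on initial data. This dependence holds because $W/\|J'\|^2$ is locally Lipschitz on $\widetilde D_i$.

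\emph{Upper semicontinuity.} Fix $x_0$ and $\va>0$, and set $s_0=\eta(x_0)$. At time $s_0+\va'$, for small $\va'$, the trajectory $\psi^{s}(x_0)$ has either $J<c$, which follows from the strict decrease rate $\ls-\frac12$, or it lies in $\ov D_1\cup\ov D_2$. By Lemma 3.9 that set is invariant, and by Lemma 3.5(1) combined with Lemma 3.8 the point $u-\la W(u)$ moves strictly into the open set $D_i^{\pm}(\mu)$. So after a little more time the trajectory lies in the \emph{open} set $D_1\cup D_2\cup\{J<c\}$. Openness and continuous dependence give $\eta(y)\ls s_0+\va$ for $y$ near $x_0$.

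\emph{Lower semicontinuity.} On the compact trajectory piece $\{\psi^s(x_0):0\ls s\ls s_0-\va\}\subset M_c^d$, we have $J>c+\delta$ for some $\delta>0$ and a positive distance to the closed set $\ov D_1\cup\ov D_2$. Continuous dependence keeps nearby trajectories within this margin up to time $s_0-\va$, so $\eta(y)\gl s_0-\va$.

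The main obstacle is the upper semicontinuity step when the trajectory exits through $\pa D_1$ or $\pa D_2$ rather than through the level $c$. One must ensure that nearby trajectories actually enter $\ov D_1\cup\ov D_2$ instead of sliding past it. I would handle this with the strict inward pointing furnished by Lemma 3.5(1): $B(u)\in D_i^{\pm}(\frac12\mu)$, so $u-\la W(u)$ lies in the interior by Lemma 3.8. This yields entry into the open set $D_i^{\pm}(\mu)$ shortly after $s_0$, which is stable under perturbation.
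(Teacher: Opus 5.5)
Your proposal has a genuine gap in the upper semicontinuity step. You evaluate the trajectory at time $s_0+\va'$ and then use openness of $\{J<c\}$ or of $D_1\cup D_2$, which presupposes that $\psi^s(x_0)$ can be continued past $s_0=\eta(x_0)$. This fails when $\psi^s(x_0)$ converges, as $s\to s_0^-$, to a critical point $z\in K_c^*$, i.e.\ a critical point at level exactly $c$ lying outside $\ov D_1^{D_{i,+}^{dd}}\cup\ov D_2^{D_{i,+}^{dd}}$. The hypothesis $M_c^d\cap K=\ep$ does not exclude this, because $M_c^d$ is disjoint from $J^c$ by definition; indeed Lemma 3.11 explicitly allows $K_c$ to be a nonempty finite set. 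Since $\frac{d}{ds}J(\psi^s(x_0))\ls-\frac12$, such a limit is reached at a finite $s_0$, and this is exactly the paper's Case 4. At $z$ the field $W/\|J'\|^2$ of (3.12) is undefined, and by Lemma 3.5(2) its norm near $z$ is at least $1/(2\|J'\|)$, so it blows up. Hence there is no point $\psi^{s_0+\va'}(x_0)$ to which openness and continuous dependence could be applied. Your remark that $K_d$ cannot be reached does not cover this case.

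The repair uses only your one-sided rate bound and never looks at or beyond $s_0$; this is what the paper's Case 4 contradiction argument does. Given $\va>0$, choose $s_1<s_0$ with $J(\psi^{s_1}(x_0))<c+\va$, which is possible since $J(\psi^s(x_0))\downarrow c$ as $s\to s_0^-$. The arc $\{\psi^s(x_0):0\ls s\ls s_1\}$ is a compact subset of $M_c^d\subset\wi D_i$, so continuous dependence gives $J(\psi^{s_1}(y))<c+2\va$ for $y\in M_c^d$ near $x_0$. Because $\frac{d}{ds}J(\psi^s(y))\ls-\frac12$ while $\psi^s(y)\in M_c^d$, this forces $\eta(y)\ls s_1+4\va<s_0+4\va$.

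This argument treats every exit through the level $c$ at once, whether the exit point is regular or critical. The inward-pointing argument is then needed only for exits through $\pa_{D_{i,+}^{dd}}(D_1\cup D_2)$ above level $c$. There the exit point is non-critical, because $A_i(\ov D_1^{D_{i,+}^{dd}})\subset D_i^+(\frac12\mu)$ gives $K\cap\pa_{D_{i,+}^{dd}}(D_1\cup D_2)=\ep$. In that case, to pass from ``$u-\la W(u)\in D_1$ for small $\la$'' to ``the trajectory actually enters $D_1$'', use the representation $\si^t=e^{-t}\si^0+\int_0^te^{s-t}B(\si^s)\,ds$, together with the invariance of $\ov D_1^{D_{i,+}^{dd}}$ and Lemma 3.8, as the paper does. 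The rest of your scheme is sound: lower semicontinuity via compactness of the trajectory arc, and openness of $\{J<c\}$ in place of the paper's implicit-function-theorem step for a regular exit point at level $c$.
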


\begin{proof}
Take any \( x_0 \in M_c^d \), and let \[\zeta(x_0):= \lim_{t \to \tau^-(x_0)} \sigma^t(x_0) = \lim_{s \to \eta^-(x_0)} \psi^s(x_0).\] We first prove that if \( \zeta(x_0) \in J^{-1}(c) \), then \( \eta(x_0) < +\infty \). In fact, for any \( 0 \ls s < \eta(x_0) \), we have
\[
\frac{d J(\psi^s(x_0))}{ds} = -\frac{( J'(\psi^s(x_0)), W(\psi^s(x_0)) )}{\| J'(\psi^s(x_0)) \|^2} \ls -\frac{1}{2}.
\]
Thus,
\[
-\int_0^{\eta(x_0)} \frac{d}{ds} J(\psi^s(x_0)) ds = J(\psi^0(x_0)) - \lim_{s \to \eta^-(x_0)} J(\psi^s(x_0)) \ls d - c.
\]
Hence,
\[
\eta(x_0) \ls 2(d - c) < +\infty.
\]

Since $x\in K$ if and only if $x=A_i(x)$,  it follows from Lemma 3.4  that $K\cap \partial _{D_{i,+}^{dd}}(D_1 \cup D_2)=\ep$. For \(\zeta(x_0) \), we have the following four cases:
\begin{enumerate}
    \item \(\zeta(x_0) \in (J^d \setminus J^c) \cap \partial_{D_{i,+}^{dd}}(D_1 \cup D_2) \),
    \item \(\zeta(x_0) \in (J^{-1}(c) \cap {D_{i,+}^{dd}}) \setminus (D_1 \cup D_2 \cup K) \),
    \item \(\zeta(x_0) \in J^{-1}(c)  \cap \partial _{D_{i,+}^{dd}}(D_1 \cup D_2) \),
    \item \(\zeta(x_0) \in K_c^*\bk(D_1\cup D_2) \).
\end{enumerate}

First, consider case (1). Assume without loss of generality that $\zeta(x_0)=\psi^{\eta(x_0)}(x_0)\in \partial _{D_{i,+}^{dd}}D_1$. By Lemma 3.9 we have for any $\va>0$,
$$\{\psi^s(x_0): s\in [\eta(x_0), \eta(x_0)+\va]\}\subset \overline{D}_1^{D_{i,+}^{dd}}.$$
Consequently,  we have by Lemma 3.4, $$B(\{\psi^s(x_0): s\in [\eta(x_0), \eta(x_0)+\va]\})\subset D^+_1(\frac{1}{2}\mu).$$ Since $\mbox{Cl}_{D_{i,+}^{dd}}D^+_1(\frac{1}{2}\mu)$ is a convex set,  we have
\[
\frac{1}{e^t - 1}\int_1^{e^t} B(\psi^s(\eta(x_0))) ds \in \ov{D_i^+(\frac{1}{2}\mu)}^{D_{i,+}^{dd}}\subset  \mbox{Cl}_{D_{i,+}^{dd}}D^+_1(\frac{1}{2}\mu).
\]
It follows from Lemma 3.8 that for $t>0$,
\[
\psi^t(\zeta(x_0)) = e^{-t}\zeta(x_0) + (1-e^{-t}) \cdot \frac{1}{e^t - 1}\int_1^{e^t} B(\psi^s(\zeta(x_0))) ds \in  D_1.
\]
In particular, we have \( \psi^{\eta(x_0)+\va}(x_0) \in (D_1 \cup D_2) \).
By the definition of \( \eta(x_0) \), we have for any $\va>0$,
\[
\left\{ \psi^s(x_0): 0 \ls s \ls \eta(x_0) - \varepsilon \right\} \subset M_c^d.
\]
 By the continuous dependence of solutions to ODEs in Banach spaces on initial values, there exists \( \delta_1 > 0 \) such that for \( \| x - x_0 \| < \delta_1 \),
\[
\left\{ \psi^s(x): 0 \ls s \ls \eta(x_0) - \varepsilon \right\} \subset M_c^d.
\]
This implies that for \( \| x - x_0 \| < \delta_1 \), \( \eta(x) > \eta(x_0) - \varepsilon \). Also, since \( \psi^{\eta(x_0)+\va}(x_0) \in D_1 \cup D_2 \), by the continuous dependence of solutions on initial values, there exists \( \delta_2 > 0 \) such that for \( \| x - x_0 \| < \delta_2 \), \( \psi^{\eta(x_0)+\va}(x) \in D_1 \cup D_2 \). Let \( \delta_0 = \min\{\delta_1, \delta_2\} \). Then for \( \| x - x_0 \| < \delta_0 \), \( \eta(x) > \eta(x_0) - \varepsilon \) and \( \eta(x) < \eta(x_0) + \varepsilon \), so
\[
|\eta(x) - \eta(x_0)| < \varepsilon.
\]
Thus, \( \eta: M_c^d \to \mathbb{R}^+ \) is continuous in case (1).

Next, consider case (2). We consider the equation
\[
J(\psi^s(x)) =c . \tag{3.13}
\]
Obviously, \( J(\psi^{\eta(x_0)}(x_0))=c \). Also,
\[
\frac{\partial (J(\psi^s(x)) - c)}{\partial s} = -\frac{( J'(\psi^s(x)), W(\psi^s(x)) )}{\| J'(\psi^s(x)) \|^2} < -\frac{1}{2} < 0.
\]
By the implicit function theorem, there exists a neighborhood \( U \) of \( x_0 \) and a continuous function \( s: U \to \mathbb{R} \) such that \( s(x_0) = \eta(x_0) \) and
\[
J(\psi^{s(x)}(x)) =c.
\]
The uniqueness of $s(x)$ in $U$ implies that $s(x)=\eta(x)$ for $x\in U$. So, \( \eta: M_c^d \to \mathbb{R}_+ \) is continuous in case (2).

Case (3) satisfies both case (1) and case (2), so we can use the methods from (1) and (2) to show that \( \eta: M_c^d \to \mathbb{R}_+ \) is continuous in case (3).

Finally, we consider case (4).
If \( s < \eta(x_0)- \varepsilon \), then \( J(\psi^s(x_0)) > c \). By continuity, there exists a neighborhood \( U \) of \( x_0 \) such that for any \( x \in U \), \( J(\psi^s(x)) > c \) for \( s < \eta(x_0)- \varepsilon \), so \( \eta(x) \gl \eta(x_0) - \varepsilon \).

To prove continuity, we use contradiction. Suppose \( \eta \) is not continuous at \( x_0 \). Then there exists \( \varepsilon_0 > 0 \) and a sequence \( \{x_n\} \subset M_c^d \) with \( x_n \to x_0 \) as \( n \to \infty \) such that \( \eta(x_n) > \eta(x_0) + \varepsilon_0 \). Then, we have for $\va\in(0,\va_0)$,
\[
\begin{aligned}
J\left(\psi^{\eta(x_0) - \varepsilon}(x_n)\right) - J\left(\psi^{\eta(x_0) + \varepsilon_0}(x_n)\right)
&= \int^{\eta(x_0) + \varepsilon_0}_{\eta(x_0) - \varepsilon}- \frac{d}{ds} J\left(\psi^s(x_n)\right) ds \\
&= \int^{\eta(x_0) + \varepsilon_0}_{\eta(x_0) - \varepsilon} \frac{( J'(\psi^s(x_n)), W(\psi^s(x_n)) )}{\| J'(\psi^s(x_n)) \|^2} ds \\
&\gl \frac{1}{2} (\varepsilon_0 +\varepsilon).
\end{aligned}
\]
Since \( \eta(x_n) > \eta(x_0) + \varepsilon_0 \), we have
\[
J\left(\psi^{\eta(x_0) - \varepsilon}(x_n)\right) \gl c + \frac{1}{2} (\varepsilon_0+\varepsilon).
\]
Letting \( x_n \to x_0 \) and \( \varepsilon \to 0 \), we get \(J\left(\psi^{\eta(x_0)}(x_0)\right) \gl c + \frac{1}{2}\varepsilon_0 \), which contradicts \( J\left(\psi^{\eta(x_0)}(x_0)\right) = c \).
\end{proof}

\begin{remark}By a similar way as the proof of the Case (1) in the above lemma we can show that $D_1$ and $D_2$ are invariant sets of (3.10). It can be observed from the proof of Case (1) above that the flow of (3.10) is non-trapping on the boundaries of $D_1$ and $D_2$. The conditions \(B\left(\mbox{Cl}_{D_{+,i}^{dd}}D_i^{\pm}(\mu)\setminus K\right) \subset D_i^{\pm}(\frac{1}{2}\mu)\)  play a crucial role in guaranteeing this property.
\end{remark}

\begin{lemma} Assume that   (H\(_{1})\), (H\(_{4})\) and  (H\(_{5})\) hold, $M_c^d\cap K=\ep$ and $K_c\bk (P\cup (-P))$ is a finite number set.  Then \( Q^c \) is a strong deformation retract of \( Q_d^* \).
\end{lemma}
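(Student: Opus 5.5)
The deformation will be built from the reparametrized flow $\psi$ of (3.12), stopped at the time $\eta$ given by Lemma 3.10. Define $\Theta: Q_d^*\times[0,1]\to Q_d^*$ by
\[
\Theta(x,\theta)=\left\{\begin{array}{ll}\psi^{\theta\eta(x)}(x), & x\in M_c^d,\\ x, & x\in Q^c.\end{array}\right.
\]
Here $\zeta(x)=\psi^{\eta(x)}(x)$ is understood as the limit point, which exists because $\eta(x)\ls 2(d-c)<\infty$ whenever the exit happens through $J^{-1}(c)$. If the exit happens through $\partial(D_1\cup D_2)$, the limit exists simply because $\psi$ is defined beyond $\eta(x)$.

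Since $Q_d^*=M_c^d\cup Q^c$ is a disjoint union, I will check the required properties in the following order.

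First, I show that $\Theta(x,1)\in Q^c$ for every $x\in M_c^d$. By the case analysis in Lemma 3.10, the point $\zeta(x)$ lies in $J^{-1}(c)\cap D_{i,+}^{dd}\subset J^c$, in $\partial_{D_{i,+}^{dd}}(D_1\cup D_2)\subset \overline D_1\cup\overline D_2$, or in $K_c^*$. The last alternative is where the hypothesis that $K_c\setminus(P\cup(-P))$ is finite enters. Because $M_c^d\cap K=\emptyset$ and $J$ decreases along the flow at rate at least $\frac12$ in $s$, a trajectory can only approach an isolated critical point at level $c$ as $s\to\eta(x)^-$. The limit then still has value $c$, hence lies in $J^c\subset Q^c$. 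Strictly, a point of $K_c^*$ is removed in $Q_c^*$ but not in $Q^c$, so $\Theta(x,1)\in Q^c$ regardless. The finiteness is needed to guarantee that the limit exists and is not an accumulation of critical points. I will get the existence of the limit from (P.S.) and the estimate
\[
\int \|\dot\psi\|\,ds\ls 2\int \|J'\|^{-1}\,ds\ls C .
\]
This bound comes from Lemma 3.5(2) on a neighbourhood where $\|J'\|$ is bounded below away from the finite set.

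Second, I show that $\Theta(x,\theta)\in Q_d^*$ for $\theta<1$. This holds because $J$ decreases along the flow, so the trajectory stays in $J^d$, and by the definition of $\eta$ it stays in $M_c^d$.

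Third, I prove continuity of $\Theta$. On $M_c^d\times[0,1]$ it follows from continuity of $\eta$ (Lemma 3.10) together with continuous dependence of $\psi$ on initial data. The delicate part is continuity at points of $\partial M_c^d\cap Q^c$. For $x_n\to x\in Q^c$ with $x_n\in M_c^d$, I need $\sup_{\theta}\|\Theta(x_n,\theta)-x_n\|\to0$. If $x\in\overline D_1\cup\overline D_2$, then $x\notin K$ by Lemma 3.4 and Lemma 3.9 gives invariance, so the non-trapping argument of Case (1) in Lemma 3.10 yields $\eta(x_n)\to0$. If instead $J(x)=c$ and $x\notin K$, then $\|J'\|\gl\delta>0$ near $x$. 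Consequently $\eta(x_n)\ls 2(J(x_n)-c)\to0$, and the displacement is bounded by $\int_0^{\eta(x_n)} 2\|J'\|^{-1}\,ds\ls 4\eta(x_n)/\delta\to0$. If $x\in K_c^*$, I use finiteness to isolate $x$ and argue via (P.S.) that the trajectories from $x_n$ cannot travel far while their energy drops by less than $J(x_n)-c\to0$. The argument is that leaving a small ball around $x$ forces crossing an annulus on which $\|J'\|\gl\delta$, which costs energy at least proportional to the width times $\delta$.

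Finally, $\Theta(\cdot,0)=\mathrm{id}$ and $\Theta(x,\theta)=x$ on $Q^c$, so $\Theta$ is a strong deformation retraction of $Q_d^*$ onto $Q^c$. I expect the main obstacle to be the continuity of $\Theta$ near points of $K_c^*$, where the annulus/(P.S.) argument must be made uniform in $n$.
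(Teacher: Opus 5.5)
Your proposal is correct and follows essentially the paper's route. It uses the same deformation $\psi^{\theta\eta(x)}(x)$ stopped at the exit time, the continuity of $\eta$ from Lemma 3.10, the non-trapping behaviour at $\partial(D_1\cup D_2)$, and the isolation-plus-(P.S.) annulus argument at points of $K_c$. These last two are the paper's two cases; in the first, the paper bounds the displacement by $\sqrt{\tau(x)}$ rather than by $\eta(x_n)/\delta$. Your treatment of regular points of $J^{-1}(c)$ and of the existence of $\zeta(x)$ fills in what the paper dismisses as ``similar''.

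One step needs tightening: continuity at $(x,1)$ for $x\in M_c^d$ with $\zeta(x)\in K_c^*$. There (3.12) is singular, so continuous dependence controls $\psi^s(x_n)$ only for $s<\eta(x)$, and you must finish with the annulus estimate you already use at boundary points of $K_c^*$. The remaining energy drop $J(\psi^s(x_n))-c$ can be made arbitrarily small, and this prevents the trajectory from leaving a small ball around $\zeta(x)$.
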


\begin{proof}
Define \( \varphi: [0,1] \times Q_d^* \to Q_d^* \) as
\[
\varphi(t,x) =
\begin{cases}
x, & \text{if } (t,x) \in [0,1] \times (Q^c \cup \ov D_1^{D_{i,+}^{dd}} \cup \ov D_2^{D_{i,+}^{dd}}), \\
\psi^{t \cdot \eta(x)}(x), & \text{if } (t,x) \in [0,1) \times M_c^d, \\
\zeta(x), & \text{if } (t,x) \in \{1\} \times M_c^d.
\end{cases}
\]

To prove that \( \varphi \) is continuous at \( (t_0,x_0) \in [0,1] \times Q_d^* \), according to the position of $x_0$ we need to consider several cases. For brevity we only consider  the following two cases. The other cases are similar.

1.\ If \( x_0 \in \partial_{D_{i,+}^{dd}}(D_1 \cup D_2)\bk J^c \), then \( \eta(x_0) = 0 \), so \( \varphi(t,x_0) = x_0 \) for all \( t \in [0,1] \). Take $\de_1>0$ such that $B(x_0, \de_1)\cap J^c=\ep$. Obviously, there exists $s(t)\in [0,1]$ such that
    \[
   t \eta(x) = \int_0^{s(t)\tau(x)} \| J'(\si^t(x)) \|^{2} dt,
    \]
    where \( \si^t(x) \) satisfies
    \[
    \frac{d \si^t(x)}{dt} = -W(\si^t(x)),\quad \si^0(x) = x.
    \]
    For any \( t \in [0,1] \) and $x\in M_c^d$, we have
    \[
    \begin{aligned}
    \|\varphi(t,x) - \varphi(t,x_0)\| &= \|\psi^{t \cdot \eta(x)}(x) - x_0\| \\
    &\ls \|x - x_0\|+\|\si^{s(t) \tau (x)}(x) - x\|  \\
    &\ls \|x - x_0\| + \int_0^{s(t) \cdot \tau(x)} \|W(\si^s(x))\| ds \\
    &\ls \|x - x_0\| + 2 \left( \int_0^{s(t) \cdot \tau(x)} \|J'(\si^s(x))\|^2 ds \right)^{1/2} \cdot \sqrt{s(t) \cdot \tau(x)}.
    \end{aligned}
    \tag{3.14}
    \]
    On the other hand,
    \[
    \frac{d J(\si^t(x))}{dt} = -( J'(\si^t(x)), W(\si^t(x)) ) \ls -\frac{1}{2} \|J'(\si^t(x))\|^2,
    \]
    so
    \[\be{ll}
    \di\int_0^{s(t) \cdot \tau(x)} \|J'(\si^s(x))\|^2 ds &\ls 2 \di\int_0^{\tau(x)} -\frac{d J(\si^s(x))}{ds} ds\\
    & = 2\left[ J(x) - J(\si^{\tau(x)}(x)) \right] \ls 2(d - c).\en
    \tag{3.15}
    \]
    From (3.14) and (3.15), for any \( t \in [0,1] \), we have
\[
\|\varphi(t,x) - \varphi(t_0,x_0)\| \ls \|x - x_0\| + 2\sqrt{2}(d - c)^{\frac{1}{2}} \sqrt{\tau(x)} \tag{3.16}
\]
Since \( \tau(x_0) = 0 \) and \( \tau \) is continuous, for any \( \varepsilon > 0 \), there exists \( 0<\delta < \min\left\{\frac{\varepsilon}{2}, \delta_1\right\} \), such that for any \( x \in B(x_0,\delta) \), we have \( 0\ls\tau(x) < \frac{\varepsilon^2}{8\sqrt{(d - c)}} \). Then, by (3.16), for any \( x \in B(x_0,\delta) \cap M_c^d \), we have
\[
\|\varphi(t,x) - \varphi(t_0,x_0)\| < \varepsilon,
\]
and for any \( x \in B(x_0,\delta) \cap (D_1 \cup D_2) \) and \( t \in [0,1] \),
\[
\|\varphi(t,x) - \varphi(t_0,x_0)\| = \|x - x_0\| < \frac{\varepsilon}{2} < \varepsilon.
\]
Thus, \( \varphi(t,x) \) is continuous at \( (t_0,x_0) \).

\medskip

2. \( x_0 \in K_c \).  Clearly,  $\varphi(t_0, x_0)=x_0$ for each $(t_0, x_0)\in [0,1]\ti K_c$. Suppose \( \varphi \) is not continuous at \( (t_0,x_0) \). Then there exists \( \varepsilon_0 > 0 \), \( \{t_n\} \subset [0,1] \), and \( \{x_n\} \subset M_c^d \) such that \( t_n \to t_0 \), \( x_n \to x_0 \), and
\[
\|\varphi(t_n,x_n) - \varphi(t_0,x_0)\| = \|\varphi(t_n,x_n) - x_0\| \gl \varepsilon_0.
\]
Let $t_0\in (0,1)$. Suppose without loss of generality that  and $t_n\gl t_0$. Since \( K_c \) contains finitely many critical points, we can assume \( x_0 \) is an isolated critical point of \( J \). By the (P.S.) condition, there exists \( \va_0>\delta_0 > 0 \) such that for \( x \in M_c^d \cap \left( \overline{B}(x_0,\delta_0) \setminus B(x_0,\frac{\delta_0}{2}) \right) \), \( \|J'(x)\| \gl \delta_0 \).  For each \( j \in \mathbb{N} \)  large enough, take \( n_j\in \mathbb N\) such that $t_{n_j}\ls t_0+\frac{1}{j}$  and \( x_{n_j} \) such that \( \left\| \varphi\left(t_0 + \frac{1}{j},x_{n_j}\right) - \varphi\left(t_0 + \frac{1}{j},x_0\right) \right\| < \frac{1}{j} \), so
\[
\lim_{j \to \infty} \left\| \varphi\left(t_0+ \frac{1}{j},x_{n_j}\right) - x_0 \right\| = 0.
\]
For sufficiently large \( j \), we can take \( t_j' \) and \( t_j'' \) such that \( t_0+ \frac{1}{j} >t_j'> t_j'' \gl t_{n_j} \) and
\[
\|\varphi(t_j',x_{n_j}) - x_0\| = \frac{\delta_0}{2}, \quad \|\varphi(t_j'',x_{n_j}) - x_0\| = \delta_0,
\]
and
\[
\|J'(\varphi(t,x_{n_j}))\| \gl \delta_0, \quad \forall t \in [t_j',t_j''].
\]
Then, we have
\[\be{ll}
\frac{\delta_0}{2} &\ls d\left(\varphi(t_j',x_{n_j}), \varphi(t_j'',x_{n_j})\right)\\
& \ls\di \int^{t_j'}_{t_j''} \left\| \frac{d}{dt}\varphi(t,x_{n_j}) \right\| dt\\
&\ls\di \int^{t_j'}_{t_j''} \frac{dt}{\|J'(\varphi(t,x_{n_j}))\|}\\
& \ls \frac{1}{\delta_0}(t_j'-t_j'' )\\
& \ls \frac{1}{\delta_0}\left( t_0 + \frac{1}{j}-t_{n_j}\right).\en
\]
Letting \( j \to \infty \), we get a contradiction. In the same way we can prove the cases $t_0=0$ and $t_0=1$.
Thus, \( \varphi \) is continuous at \( (t_0,x_0) \). This completes the proof.\end{proof}

\begin{lemma}[Nontrivial interval theorem \cite{s28}] If $\exists q\in\mb N$ and $\exists c<d$ such that $H_q(J^d, J_c)$ is nontrivial, then $K\cap J^{-1}[c,d]\neq\ep$.\end{lemma}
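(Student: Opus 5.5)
The plan is to argue by contraposition, using the (P.S.) condition that is in force throughout this section. Assume $K\cap J^{-1}[c,d]=\ep$. I will show that $J^c$ (written $J_c$ in the statement) is a strong deformation retract of $J^d$. Homotopy invariance then gives $H_q(J^d,J^c)\cong H_q(J^c,J^c)=0$ for every $q$, which contradicts the hypothesis.

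\textbf{Step 1: uniform lower bound on $\|J'\|$.} I claim there is $\de>0$ with $\|J'(u)\|\gl\de$ for all $u\in J^{-1}[c,d]$. If not, there is a sequence $u_n$ with $J(u_n)\in[c,d]$ and $\|J'(u_n)\|\ri0$. By (P.S.) a subsequence converges to some $u$. Since $J$ and $J'=I-A$ are continuous, $J(u)\in[c,d]$ and $J'(u)=0$, contradicting $K\cap J^{-1}[c,d]=\ep$.

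\textbf{Step 2: a locally Lipschitz pseudo-gradient field.} On $\widetilde X:=X\bk K$ I will build a field by the partition-of-unity argument of Lemma 3.5, this time without the cone constraints. This gives a locally Lipschitz $V$ on $\widetilde X$ with
\[\|V(u)\|\ls 2\|J'(u)\| \quad\text{and}\quad (J'(u),V(u))\gl\tfrac12\|J'(u)\|^2 .\]
I then consider the normalized flow
\[\frac{d\psi}{ds}=-\frac{V(\psi)}{\|J'(\psi)\|^2},\qquad \psi(0)=x .\]
This is well defined on a neighbourhood of $J^{-1}[c,d]$. Along it, $\frac{d}{ds}J(\psi^s(x))\ls-\frac12$ and $\|\frac{d}{ds}\psi\|\ls 2/\de$. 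So a trajectory starting in $J^d\bk J^c$ cannot escape to infinity in finite time. It stays where $\|J'\|\gl\de$, and it reaches the level $c$ at some time $\eta(x)\ls 2(d-c)$.

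\textbf{Step 3: the retraction.} For $x\in J^d\bk J^c$ let $\eta(x)$ be the first $s$ with $J(\psi^s(x))=c$, and set $\eta(x)=0$ on $J^c$. Define
\[\varphi(t,x)=\psi^{t\eta(x)}(x)\ \text{ for } x\in J^d\bk J^c,\qquad \varphi(t,x)=x\ \text{ for } x\in J^c .\]
Since $J$ decreases along the flow, $\varphi$ maps $J^d$ into $J^d$, and $\varphi(1,\cdot)$ takes values in $J^c$.

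\textbf{Main obstacle: continuity of $\eta$ and $\varphi$, especially near $J^{-1}(c)$.} I will follow case (2) of Lemma 3.10. The function $s\mapsto J(\psi^s(x))-c$ has derivative $\ls-\frac12$, so the implicit function theorem makes the hitting time $\eta$ continuous on $J^d\bk J^c$. At a point $x_0\in J^{-1}(c)$ I will use the estimate $\eta(x)\ls 2(J(x)-c)^+$, which follows from $dJ/ds\ls-\frac12$ along trajectories. Combined with the speed bound $2/\de$ this gives
\[\|\varphi(t,x)-x_0\|\ls\|x-x_0\|+\frac{4}{\de}\,\bigl(J(x)-c\bigr)^+ ,\]
and the right-hand side tends to $0$ as $x\ri x_0$. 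Hence $\varphi$ is continuous on $[0,1]\ti J^d$, and $J^c$ is a strong deformation retract of $J^d$, which completes the contrapositive.
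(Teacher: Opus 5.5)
Your proof is correct, and it is the standard deformation-lemma argument behind the result that the paper cites from Chang without giving a proof. You rightly make (P.S.) on $J^{-1}[c,d]$ an explicit hypothesis, since the statement leaves it implicit but cannot do without it; with that, the absence of critical points in $J^{-1}[c,d]$ makes $J^c$ a strong deformation retract of $J^d$ through a normalized pseudo-gradient flow, in the spirit of the paper's (3.12) and Lemmas 3.10--3.11. One point to tidy: since $J'$ is merely continuous, the field $V/\|J'\|^2$ need not be locally Lipschitz, so define $\psi$ as the time change $s=\int_0^t\|J'(\sigma^\tau(x))\|^2\,d\tau$ of the Lipschitz flow $\dot\sigma=-V(\sigma)$ (as the paper does), or normalize by $\|V\|^2$ instead. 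Either way you get existence, uniqueness and continuous dependence, and your estimates go through unchanged up to constants.
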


\medskip

{\bf \large The Proof of Theorem 3.2}

\begin{proof}
 Let $i\in\Lambda_1$ be fixed and let $V_1=\ov{D}_1^{D_{i,+}^{dd}}\cap D_2$. Then $V_1$ is a nonempty open invariant set of (3.10) in $\ov{D}_1^{D_{i,+}^{dd}}$. We claim that $J$ is bounded from below on $V_1$. In fact, for each $u\in V_1$, we have $d(u, P)\ls \mu$ and $d(u,-P)\ls \mu$. Since $X$ is a real Hilbert space, $P$ and $-P$ are closed convex sets in $X$, then $d(\cdot, P)$ and $d(\cdot, -P)$ are attainable. Hence,  for each $u\in V_1$, we may take $w_1\in P$ and $w_2\in -P$ such that $d(u, P)=\|u-w_1\|$ and $d(u, -P)=\|u-w_2\|$. Write $\xi_u=u-w_1$ and $\eta_u=u-w_2$. Then we have $w_1-w_2=\eta_u-\xi_u$. Since $X\hookrightarrow Z$, we have
 $$\|\xi_u\|_2\ls c_1\|\xi_u\|\ls c_1\mu,\  \|\eta_u\|_2\ls c_1\|\eta_u\|\ls c_1\mu.$$
Note that $Z$ is a Banach lattice and $0\ls w_1\ls w_1-w_2$. Hence, we have
$$\|w_1\|_2\ls \|w_1-w_2\|_2=\|\eta_u-\xi_u\|_2\ls \|\eta_u\|_2+\|\xi_u\|_2\ls 2 c_1\mu,$$
and so $$\|u\|_2\ls \|w_1\|_2+\|\xi_u\|_2\ls 3c_1\mu.$$
By (H$_5)$ we have  for all $u\in V_1$,
$$|F(u)|=\left|\int_0^1\langle \mathbf{f}(su), u\rangle ds\right|\ls g(s\|u\|_2)\|u\|_2\ls 3c_1\mu g(3 c_1\mu), $$ and so $J(u)\gl -3c_1\mu g(3 c_1\mu)$. Hence,  $J$ is bounded from below on $V_1$.

It follows from (H$_2)$ that for each $u\in (P\bk\{0\})
 \cap \ov{D}_1^{D_{i,+}^{dd}}$,  $J(tu)\ri-\infty$ as $t\ri+\infty$. Thus, $C_{\ov{D}_1^{D_{i,+}^{dd}}} V_1\neq \ov{D}_1^{D_{i,+}^{dd}}$, and so $\pa_{\ov{D}_1^{D_{i,+}^{dd}}} C_{\ov{D}_1^{D_{i,+}^{dd}}} V_1\neq\ep$. According Lemma 3.7, we have
\[c_{i,+}:=\inf_{u\in \pa_{\ov{D}_1^{D_{i,+}^{dd}}} C_{\ov{D}_1^{D_{i,+}^{dd}}} V_1} J(u)\gl \inf_{u\in \pa_{\ov{D}_1^{D_{i,+}^{dd}}} V_1}J(u)>-3c_1\mu g(3 c_1\mu).\]
By Lemma 3.6, $c_{i,+}$ is a critical value of $J|_{D_{i,+}^{dd}}$, and there exists $x_{i,+}\in \pa_{\ov{D}_1^{D_{i,+}^{dd}}} C_{\ov{D}_1^{D_{i,+}^{dd}}} V_1$ such that
\[(x_{i,+}, w)=\langle \mathbf{f}(x_{i,+}), w\rangle, \ \forall w\in D_{i,+}^{dd}.\]
An argument similar to Theorem 3.1 yields that
\[(x_{i,+}, w)=\langle \mathbf{f}(x_{i,+}), w\rangle, \ \forall w\in X.\]
This implies that $x_{i,+}\in K$. Since $A_i(D_1)\subset D^+_i(\frac{1}{2}\mu)$, we have $K\cap (\ov{D}_1^{D_{i,+}^{dd}}\bk P)=\ep$. So, $x_{i,+}\in K\cap (P\bk\{0\})$. Hence, $J$ has at least $2^{m_1}-1$ positive critical points such that each of these positive critical points has the form $\bigsqcup\limits_{i=1}^{m_1}(b_i x_{i,+})$, where $b_i\in \{0,1\}$ for $i\in\Lambda_1$  with $0\neq (b_1, b_2,\cdots, b_{m_1})\in \mb R^{m_1}$.

Let $V_2=\ov{D}_2^{D_{i,+}^{dd}}\cap D_1$. By a similar way as above, we can show that there exists $y_{i,+}\in K\cap (-P) $ such that $y_{i,+}\in \pa_{\ov{D}_2^{D_{i,+}^{dd}}} C_{\ov{D}_2^{D_{i,+}^{dd}}}V_2$. Hence, $J$ has at least $2^{m_1}-1$ negative critical points such that each of these negative critical points has the form $\bigsqcup\limits_{i=1}^{m_1}(b_i y_{i,+})$, where $b_i\in \{0,1\}$ for $i\in\Lambda_1$  with $0\neq (b_1, b_2,\cdots, b_{m_1})\in \mb R^{m_1}$.

By (H$_2)$, we may take  paths $\ga_1$ and $\ga_2$ such that \[ \gamma_1 \subset \ov{D}^{D_{i,+}^{dd}}_1\cup \ov{D}^{D_{i,+}^{dd}}_2,   \gamma_2 \subset D_{i,+}^{dd}\bk\big(\ov{D}^{D_{i,+}^{dd}}_1\cup \ov{D}^{D_{i,+}^{dd}}_2\big), \]\[ \gamma_1(0) = \gamma_2(0) = \overline{x}_1\in D_1\bk \ov D_2^{D_{i,+}^{dd}}, \gamma_1(1) = \gamma_2(1) = \overline{x}_2\in D_2\bk \ov D_1^{D_{i,+}^{dd}},\]and \[
\max_{t \in [0,1]} J(\gamma_2(t)) < \max_{t \in [0,1]} J(\gamma_1(t)).
\] Moreover,  we may assume \( \gamma_1 \) and \( \gamma_2 \) each has no loops.

Define the map \( i^c: \gamma_1 \cup \gamma_2 \to Q^c := (J^c\cap D_{i,+}^{dd}) \cup \ov{D}^{D_{i,+}^{dd}}_1 \cup \ov{D}^{D_{i,+}^{dd}}_2 \).
Let \( i_*^c: H_1(\gamma_1\cup  \gamma_2) \to H_1(Q^c) \) be the induced homomorphism. Define
\[
c_1: = \sup\left\{ c > c_0: i_*^c \text{ is monomorphism} \right\},
\]
where \( c_0 = \inf_{h \in \Gamma} \max_{t \in [0,1]} J(h(t)) \), and \[ \Gamma = \left\{ h: [0,1] \to \ov{D}^{D_{i,+}^{dd}}_1 \cup \ov{D}^{D_{i,+}^{dd}}_2:  h \text{ is continuous}, h(0)=\overline{x}_1, h(1)=\overline{x}_2 \right\}.\]

Let \( P = \left\{ s\gamma_1(t) + (1-s)\gamma_2(t) \mid s,t \in [0,1] \right\} \). Clearly, for \( c'\gl \max_{p \in P} J(p) \), we have \( i_*^{c'} = 0 \) and so \( c_1 \ls \max_{p \in P} J(p) \). Next, we show \( c_1 \gl c_0 \). Let $c'\in\mb R$ be such that  \( \max_{t\in [0,1]}J(\ga_2(t))<c' < c_0 \). Set \[ G_1 = J^{c'}\cap D_{i,+}^{dd}, \  G_2 = \ov {D}^{D_{i,+}^{dd}}_1 \cup \ov{D}^{D_{i,+}^{dd}}_2. \] Consider the following Mayer-Vietoris sequence:
\[
\xymatrix{
H_1(\ga_1) \oplus H_1(\ga_2) \ar[r] & H_1(\ga_1 \cup \ga_2) \ar[r]^{\partial} \ar[d]^{i_*^{c'}} & \tilde{H}_0(\ga_1 \cap \ga_2) \ar[r] \ar[d]^{j_*} & \tilde{H}_0(\ga_1) \oplus \tilde{H}_0(\ga_2) \\
H_1(G_1) \oplus H_1(G_2)\ar[r] & H_1(G_1 \cup G_2) \ar[r]^{\partial} & \tilde{H}_0(G_1 \cap G_2) \ar[r] & \tilde{H}_0(G_1) \oplus \tilde{H}_0(G_2)
}
\]
where the maps $j_*$ is induced by inclusion. Since $$H_1(\ga_1)\oplus H_1(\ga_2)=\widetilde H_0(\ga_1)\oplus\widetilde H_0(\ga_2)=0,$$ the exactness in the first row implies that $\pa$ is an isomorphism. Since $c'< c_0$ there is no path in  \( \big(J^{c'}\cap D_{i,+}^{dd}\big ) \cap \big(\ov {D}^{D_{i,+}^{dd}}_1 \cup \ov{D}^{D_{i,+}^{dd}}_2\big) =G_1\cap G_2\) joining $\bar{x}_1$ and $\bar x_2$.  Thus $\ga_1\cap \ga_2$  consists of two points which are in different path components of $G_1\cap G_2$. Hence $j_*$ is a monomorphism which implies  that $i_*^{c'}$ is also a monomorphism. So,  \( c_1 \gl c_0 \).

For any \( \varepsilon > 0 \) small, we have the following commutative diagram:
\[
\xymatrix{
H_2(Q^{c_1+\va}, Q^{c_1-\va}) \ar[d]^{\pa} \\
H_1(Q^{c_1-\va}) \ar[d]^{i_*} & H_1(\ga_1\cup \ga_2) \ar[l]^{i_*^{c_1-\va}} \ar[ld]^{i_*^{c_1+\va}} \\
H_1(Q^{c_1+\va})
}
\]
If \( H_2(Q^{c_1+\varepsilon}, Q^{c_1-\varepsilon}) = 0 \), then \( i_* \) is a monomorphism.  By the definition, \( i_*^{c_1-\va} \) also is a monomorphism. Thus, \( i_* \circ i_*^{c_1-\va} \) is a monomorphism, which contradicts that $i_*^{c_1+\va}=i_* \circ i_*^{c_1-\va}$ is a zero-morphism.
Hence, \( H_2(Q^{c_1+\varepsilon}, Q^{c_1-\varepsilon}) \neq 0 \). By the excision property of homology, \[ H_2\Big((J^{c_1+\varepsilon}\cap D_{i,+}^{dd}) \bk \big(\ov{D}^{D_{i,+}^{dd}}_1 \cup\ov{D}^{D_{i,+}^{dd}}_2\big), (J^{c_1-\varepsilon}\cap D_{i,+}^{dd}) \bk \big(\ov{D}^{D_{i,+}^{dd}}_1 \cup\ov{D}^{D_{i,+}^{dd}}_2\big)\Big)\cong  H_2(Q^{c_1+\varepsilon}, Q^{c_1-\varepsilon})\neq 0. \] It follows from Lemma 3.12 that there at least exists a critical point in \( M^{c_1+\varepsilon}_{c_1+\varepsilon}\). Since $\va>0$ is arbitrarily given and $J$ satisfies the (P.S.) condition, we see that $c_1$ is a critical value of $J$,  and  there exists $z_{i,+}$ such that \[z_{i,+}\in \big(J^{-1}(c_1)\cap K\cap D_{i,+}^{dd}\big)\bk   \big(\ov{D}^{D_{i,+}^{dd}}_1 \cup\ov{D}^{D_{i,+}^{dd}}_2\big).\]
An argument similar to Theorem 3.1 yields that $J$ has at least $2^{2m_1}-1$ nontrivial critical points such that each of these  critical points has the form \[\left(\bigsqcup\limits_{i=1}^{m_1}(b_i x_{i,+})\right)\bigsqcup\left(\bigsqcup\limits_{i=1}^{m_1}(\wi b_i y_{i,+})\right)\bigsqcup\left(\bigsqcup\limits_{i=1}^{m_1}(\bar b_i z_{i,+})\right), \]
where $b_i, \wi b_i, \bar b_i\in\{0,1\}$ with $b_i+ \wi b_i+ \bar b_i\ls 1$ for $i\in \Lambda_1$,   and $\sum_{i=1}^{m_1}(b_i+ \wi b_i+ \bar b_i)\gl 1$.
Among these critical points, there exists at least $m_1$ sign-changing solutions, $2^{m_1}-1$ positive critical points and $2^{m_1}-1$ negative critical points.

Next, we show that if we assume  that $m_1=1$, $m_2=0$ and $K\bk (P\cup (-P))$ is a finite number set,  then \( J \) has a critical point \( z_0 \in X \setminus (\overline{D}_1^X \cup \overline{D}_2^X) \) satisfying \( C_2(J, z_0) \neq 0 \). By Lemma 3.11 and the excision property of homology theory, we see that for $\va>0$ small enough,
\[0\neq H_2(Q^{c_1+\varepsilon}, Q^{c_1-\varepsilon})\cong H_2(Q^{c_1}, Q^{c_1}_*)\cong H_2(S^{c_1}, S^{c_1}\bk K_{c_1}), \]
where $S^{c_1}:=(J^{-1}(c_1)\cap K)\bk (P\cup (-P))$.
 Now suppose that
\[
S^{c_1} \cap K = \{ u_1^\ast, u_2^\ast, \dots, u_m^\ast \}.
\]
Then we have for \( r > 0 \) small enough
\[
\begin{aligned}
H_2(S^{c_1}, S^{c_1} \setminus K_{c_1})
&\cong H_2\left(
S^{c_1} \cap \left( \bigcup_{i=1}^m B(u_i^\ast, r) \right),
(S^{c_1} \setminus K_{c_1}) \cap \left( \bigcup_{i=1}^m B(u_i^\ast, r) \right)
\right) \\
&\cong \bigoplus_{i=1}^m H_2\left(
J^{c_1} \cap B(u_i^\ast, r),
(J^{c_1} \cap B(u_i^\ast, r)) \setminus \{u_i^\ast\}
\right) \\
&\cong \bigoplus_{i=1}^m C_2(J, u_i^\ast),
\end{aligned}
\]
where \( C_2(J, u_i^\ast) \) denotes the local homology group at \( u_i^\ast \). Thus, there exists $z_0\in K^*_{c_1}$ with $C_2(J, z_0)\neq 0$.

If the conditions of 2) hold,  an  argument similar to Theorem 3.1 yields that for each $j\in\Lambda_2$, $J$ has at least one  critical point $w_j\in D_{j,-}^{dd}$. Hence, in this case $J$ admits at least $2^{m_2}-1$  critical points with each of theses critical points has the form  $\bigsqcup\limits_{j=1}^{m_2}(d_j w_j)$, where $d_j\in \{0,1\}$ for $j\in\Lambda_2$  with $0\neq (d_1, d_2,\cdots, d_{m_2})\in \mb R^{m_2}$.

Assume that the conditions of 3) hold.  Then  in a similar argument as in  Theorem 3.1 yields that $J$ possesses at least $ 2^{2m_1+m_2}-1$ distinct nontrivial critical points such that each of these  critical points has the one of the three forms:
\[\left(\bigsqcup\limits_{i=1}^{m_1}(b_i x_{i,+})\right)\bigsqcup\left(\bigsqcup\limits_{i=1}^{m_1}(\wi b_i y_{i,+})\right)\left(\bigsqcup\limits_{i=1}^{m_1}(\bar b_i z_{i,+})\right)\left(\bigsqcup\limits_{j=1}^{m_2}(d_j w_{j})\right), \]
where $b_i, \wi b_i, \bar b_i\in\{0,1\}$ with $b_i+ \wi b_i+ \bar b_i\ls 1$ for $i\in \Lambda_1$,  $d_j\in \{0,1\}$ for $j\in \Lambda_2$ and $\sum_{i=1}^{m_1}(b_i+ \wi b_i+ \bar b_i)+\sum_{j=1}^{m_2}d_j\gl 1$. Among these critical points there exists at least $m_1$ sign-changing critical points, $2^{m_1}-1$ positive critical points and $2^{m_1}-1$ negative critical points.
\end{proof}

\begin{remark} In the proof above  we  use some ideas from \cite{s20}. Clearly, under the condition of the space decomposition in  (H$_1)$,  one can discuss the existence of infinitely many  critical points for symmetric functionals.  We leave this to the interested readers.\end{remark}

\begin{remark} In Theorem 3.1 and 3.2,  we require that $J$ satisfies  the (P.S.) condition on each $D_{i,+}^{dd}$ and $D_{j,-}^{dd}$. This condition is weaker than requiring $J$ to satisfy the (P.S.) condition on the entire space $X$, and it is also easier to verify. \end{remark}

\begin{remark}Clearly, the requirement that $g(s)=o(s)$ as $s\ri 0^+$ in (H$_5)$ can be weaken.\end{remark}

\begin{definition}
Let \(E\) be a Riesz space, and let \(E_1, E_2, \dots, E_m\) be linear subspaces of \(E\). The subspaces are said to be \textit{pairwise lattice disjoint} if for any \(i \neq j\), every \(x \in E_i\) and \(y \in E_j\) satisfy \(x \perp y\) (i.e., \(|x|\wedge|y| = 0\)).

The direct sum \(D = \bigoplus_{i=1}^m E_i\) is called \textit{complementable by a lattice-disjoint subspace} in \(E\) if there exists a linear subspace \(E_0 \subseteq E\) such that \(E_0\) is lattice disjoint from each \(E_i\) (\(i=1,\dots,m\)) and
\[
E = E_0 \oplus D.
\]
\end{definition}

\begin{remark} Using the above concepts, we can relax the corresponding space decomposition requirement in (H\(_1)\) to the condition that $\bigoplus\limits_{i=1}^{m_1}G^{dd}_{i,+}\oplus\bigoplus\limits_{j=1}^{m_2}G^{dd}_{j,-}$ is complementable by a lattice-disjoint subspace in $E$. This is useful in the study of boundary value problems for elliptic equations; see Remark 4.1 below.    \end{remark}

\begin{remark}In Theorem 3.2, we obtain the existence of $z_i$ by means of homology group computation.
It should be noted that this result can also be proved by applying the four critical points theorem given in Reference \cite{s19}; see Theorem 3.3 in \cite{s19}.\end{remark}

\section{ Applications to Elliptic Boundary Value Problems}

\noindent Consider the elliptic boundary value problem
\[\left\{
\be{ll}
-\Delta u =a^+(x) g_1(x, u)-a^-(x)(g_2(x, u)-h(x, u)), & x \in \Omega, \\
u|_{\partial \Omega} = 0,
\en\right.\no(4.1)
\]
where $\Omega \subset \mathbb{R}^N$ is a bounded domain with a $C^1$ boundary,  $g_1, g_2, h \in C(\ov\Omega\ti \mathbb{R}, \mathbb{R})$, $a \in C(\overline{\Omega}, \mathbb{R})$, $a^+(x)=\max\{a(x), 0\}$ and $a^-(x)=\max\{-a(x), 0\}$ for $x\in \ov\Om$.
\medskip

First, we give some hypotheses:
\begin{itemize}
\item[(A$_1)$]\ $a\in C(\ov \Omega)$,  $\Omega^+:=\{x\in \Omega: a(x)>0\}$ has $m_1$ connected components,  $\Omega^-:=\{x\in \Omega: a(x)<0\}$ has $m_2$ connected components, and $| \ov{\Omega}^+\cap \ov{\Omega}^-|=0$;

\item[(A$_2)$]\ Set $2^*=+\infty$ if $N=1,2$ and $2^*=2N/(N-2)$.  There exist $b > 0$, $2 < p < 2^* $, such that
\[
\max\{|g_1(x,t)|, |g_2(x,t)|, |h(x,t)|\} \ls b(1 + |t|^{p-1})\   \mbox{ for}\ x\in\Omega\quad \mbox{and}\  t \in \mathbb{R};
\]

\item[(A$_3)$]\  For each $t\in\mb R$ and $x\in\Om$, $G_2(x,t)\gl 0$, $H(x,t)\gl 0$, and there exist $R > 0$, $0 < \theta < \frac{1}{2}$, such that for $x\in \Om\ \mbox{and}\ |t| \gl R$,
 \[0 < G_1(x,t) \ls \theta t g_1(x, t), \]
where $$G_1(x, t) = \int_{0}^{t} g_1(x, s) ds,  G_2(x, t) = \int_{0}^{t} g_2(x, s) ds,  H(x, t) = \int_{0}^{t} h(x, s) ds;$$

\item[(A$_4)$]\  For some $2^*>\beta_1>2$, $$\limsup_{t \to 0^+} \frac{g_1(x, t)}{t^{\beta_1}} =0\ \mbox{uniformly in}\  x\in\ov\Om;$$

\item[(A$_5)$]\  For some $2^*>\beta_2>2$ and $\al_0\gl 0$, $$\lim\limits_{t \ri 0^+} \frac{g_2(x,t)}{|t|^{\beta_2-2}t} =\al_0 \ \mbox{uniformly in}\ x\in\ov\Om;$$

\item[(A$_6)$]\ For some $\beta_3, \bar\beta_3\in (1,2)$  and $\al_1, \bar\al_1>0$, $$\lim_{t \to 0^+} \frac{h(x, t)}{|t|^{\beta_3-2}t} =\al_1\ \mbox{and}\ \lim_{t \to \infty} \frac{h(x, t)}{|t|^{\bar\beta_3-2}t} =\bar\al_1\ \mbox{uniformly in}\ x\in\ov\Om;$$

\item[(A$_7)$]\ $g_1(x, u)u\gl 0$ for each $x\in \Om$ and  $u\in\mb R$.
\end{itemize}

The  $X:=H_0^1(\Omega)$ is endowed with the inner product and associated norm
\[
(u, v) = \int_{\Omega} \nabla u \cdot \nabla v  dx, \quad u, v \in H_0^1(\Omega),
\]
\[
\|u\| = \left( \int_{\Omega} |\nabla u|^2  dx \right)^{1/2}.
\]
The  $E:=L^2(\Omega)$ is endowed with the inner product and associated norm
\[
\langle u, v\rangle = \int_{\Omega}  u \cdot  v  dx, \quad u, v \in H_0^1(\Omega),
\]
\[
\|u\|_2 = \left( \int_{\Omega} | u|^2  dx \right)^{1/2}.
\]
Let $P_1:=\{u\in L^2(\Om):u(x)\gl 0 \ \text{a.e. in}\ \Om\}$ and $P=P_1\cap X$. Then $P_1$ is a cone in $E$, and $P$ a  cone in $X$, respectively. Define the ordering $\ls$ in $E$ by $u\ls v$ if and only if $v-u\in P_1$. It is easy to see that $E$ is a Banach lattice and $X$ is a Riesz space but no a Banach lattice. Assume that the $m_1$ connected components of $\{x\in \Om: a(x)>0\}$  are $\Om_1^+, \Om_2^+,\cdots, \Om_{m_1}^+$,  and the $m_2$ connected components of $\{x\in \Om: a(x)<0\}$  are $\Om_1^-, \Om_2^-,\cdots, \Om_{m_2}^-$. For $i\in\Lambda_1$ and $j\in\Lambda_2$, let $a_i^+$  and $a_j^-$ be defined by
\[a_i^+(x)=\left\{\be{ll} a(x), &x\in \Om_i^+;\\
0, &x\in \Om\bk \Om_i^+\en\right.\  \mbox{and}\
a_j^-(x)=\left\{\be{ll} a(x), &x\in \Om_j^-;\\
0, &x\in \Om\bk \Om_j^-.\en\right.\]
 According to (A$_1)$, we have
\[a^+=\bigsqcup_{i=1}^{m_1} a^+_i, a^-=\bigsqcup_{j=1}^{m_2} a^-_j.\]
Correspondingly, $E$ has a decomposition:
\[E=\bigoplus\limits_{i=1}^{m_1}G^{dd}_{i,+}\oplus\bigoplus\limits_{j=1}^{m_2}G^{dd}_{j,-},\]
and $X$ has a decomposition:
\[X=\bigoplus\limits_{i=1}^{m_1}D^{dd}_{i,+}\oplus\bigoplus\limits_{j=1}^{m_2}D^{dd}_{j,-}.\]

For  $u, v\in E$ with $u\bot v$,
we have $$|{\mbox{supp}}\ u\cap {\mbox{supp}}\ v|=0,\no(4.2)$$
and so $$\int_\Om u\cdot v dx=0.$$
Hence, disjointness and orthogonality in $E$ are compatible.
\medskip

Let us denote as in \cite{s11} the linear sapce of $k$ times weakly differentiable functions by $W^k(\Om)$. According to Lemma 7.7 in \cite{s11} we have the following Lemma 4.1.
\begin{lemma}\  Let $u\in W^1(\Omega)$. then $Du=0$ a.e. on any set where $u$ is constant.
\end{lemma}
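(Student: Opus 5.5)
Since the set where $u$ equals a constant $c$ is the zero set of $u-c$, and $D(u-c)=Du$ in the weak sense (constants have vanishing weak derivative), I would first reduce to the case $c=0$. The claim then reads: if $u\in W^1(\Omega)$, then $Du=0$ a.e. on $Z:=\{x\in\Omega: u(x)=0\}$. The plan is to obtain this from the weak chain rule for the positive and negative parts. The key intermediate claim is that $u^+,u^-\in W^1(\Omega)$ with
\[
Du^+=\left\{\begin{array}{ll} Du, & \text{on } \{u>0\},\\ 0, & \text{on } \{u\ls 0\},\end{array}\right.
\qquad
Du^-=\left\{\begin{array}{ll} 0, & \text{on } \{u\gl 0\},\\ -Du, & \text{on } \{u< 0\}.\end{array}\right.
\]
Given this, $u=u^+-u^-$ yields $Du=Du^+-Du^-$, and on $Z$ both terms vanish by construction. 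So $Du=0$ a.e. on $Z$.

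To prove the intermediate claim, I would proceed in two steps.

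\emph{Step 1.} For $f\in C^1(\mathbb{R})$ with $f'$ bounded, show that $f(u)\in W^1(\Omega)$ and $D f(u)=f'(u)Du$. For this, take mollifications $u_m$ of $u$, which converge to $u$ and $Du_m\to Du$ in $L^1_{loc}$. Passing to a subsequence, $u_m\to u$ a.e. Then $f(u_m)\to f(u)$ in $L^1_{loc}$ by the Lipschitz bound $|f(u_m)-f(u)|\ls \sup|f'|\,|u_m-u|$. Moreover,
\[
|f'(u_m)Du_m-f'(u)Du|\ls \sup|f'|\,|Du_m-Du|+|f'(u_m)-f'(u)|\,|Du|,
\]
where the last term tends to zero in $L^1_{loc}$ by dominated convergence. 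Integrating by parts against a test function $\varphi\in C_c^\infty(\Omega)$ and passing to the limit gives the weak derivative.

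\emph{Step 2.} Apply Step 1 to
\[
f_\varepsilon(t)=\left\{\begin{array}{ll}\sqrt{t^2+\varepsilon^2}-\varepsilon, & t>0,\\ 0, & t\ls 0,\end{array}\right.
\]
which is $C^1$ with $0\ls f_\varepsilon'\ls 1$. This gives
\[
\int_\Omega f_\varepsilon(u)\,D\varphi\,dx=-\int_{\{u>0\}}\frac{u\,Du}{\sqrt{u^2+\varepsilon^2}}\,\varphi\,dx .
\]
Letting $\varepsilon\to0$, dominated convergence (with dominating functions $|u||D\varphi|$ and $|Du||\varphi|$ on $\mathrm{supp}\,\varphi$) gives the formula for $Du^+$. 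The formula for $u^-$ follows from $u^-=(-u)^+$.

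The only delicate point I expect is Step 1 when $u$ and $Du$ are merely locally integrable. The convergences must be carried out on compact subsets containing $\mathrm{supp}\,\varphi$, and one must pass to an a.e.\ convergent subsequence before invoking dominated convergence for $f'(u_m)$. Since this is exactly Lemma 7.7 of \cite{s11} (building on its Lemma 7.6), I would present these steps briefly and cite that source for the routine details.
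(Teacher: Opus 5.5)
Your proposal is correct and takes essentially the paper's route: the paper gives no argument of its own but cites Lemma 7.7 of \cite{s11}, which is proved exactly as you outline, via the chain rule for $C^1$ functions with bounded derivative by mollification (Lemma 7.5), the formula for $Du^{\pm}$ from the same approximations $f_\varepsilon$ (Lemma 7.6), and reduction to the set $\{u=0\}$ by subtracting the constant. Your sign convention $u=u^+-u^-$ with $u^-=(-u)^+$ matches Section 2 of the paper, and it correctly gives $Du=Du^+-Du^-=0$ a.e.\ on $\{u=0\}$.
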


For  $u, v\in X(\subset W^1(\Om))$ with $u\bot v$, it follows from Lemma 4.1 that   $$Du=0\ \mbox{a.e. on}\ \Om\bk\mbox{supp}\ u,\  \mbox{and}\  Dv=0\ \mbox{a.e. on}\ \Om\bk\mbox{supp}\ v.$$ By (4.2), we have $$\big|{\mbox{supp}}\ |Du|\cap {\mbox{supp}}\ |Dv|\big|=0, $$ and so $(u,v)=0$. Hence, disjointness and orthogonality in $X$ are compatible.

\medskip

Let $J:X\ri\mb R$ be the functional corresponding to (4.1):
$$J(u)=\frac{1}{2}\|u\|^2-\int_\Om \left(a^+(x) G_1(x,u)-a^-(x)G_2(x,u)+a^-H(x,u)\right)dx.$$
It follows that
$$(J'(u), v)=(u,v)-\big\langle a^+(x) g_1(x, u)-a^-(x)g_2(x, u)+a^-(x)h(x,u), v\big\rangle.$$
Let $\mathbf{f}:X\ri E^*$ be defined by
$$\mathbf{f}(u)(x)=a^+(x) g_1(x, u)-a^-(x)(g_2(x, u)-h(x,u))\ \mbox{for}\ x\in \Om.$$ Then we have $(J'(u), v)=(u,v)-\langle \mathbf{f}(u), v\rangle$ for each $u,v\in X$. It follows from (A$_4)$, (A$_5)$ and (A$_6)$ that  $$g_1(x, 0)=g_2(x, 0)=h(x, 0)=0\ \mbox{for}\  x\in \ov\Om.$$ Then, it is easy to see that $\mathbf{f}$ is local and orthogonally additive.

 Clearly, $\mathbf{f}:L^{2^*}(\Om)\to E$ is continuous. By Sobolev Embedding Theorem, $i: X\to  L^{2^*}(\Om)$ is continuous. Thus, $\mathbf{f}=\mathbf{f}\circ i:X\to E$ is continuous.
Hence, (H$_1)$ holds.
\medskip

\begin{lemma}\  Condition (H\(_{2})\) holds.\end{lemma}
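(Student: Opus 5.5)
To verify (H$_2$), fix $i\in\Lambda_1$ and $u\in D^{dd}_{i,+}\setminus\{0\}$. The plan is to reduce $J(tu)$ to an integral over $\Omega_i^+$ and then exploit the Ambrosetti--Rabinowitz condition (A$_3$), which forces $G_1$ to grow superquadratically.

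\emph{Localization.} Since $u\in G^{dd}_{i,+}$, $u$ is disjoint from every element of $G^d_{i,+}$. By Example 2.3 this means $\mathrm{supp}\,u\subseteq_1 \mathrm{supp}\,a_i^+=\overline{\Omega_i^+}$ up to null sets; in particular $u=0$ a.e. on $\Omega^-$. Because $a^-$ vanishes off $\Omega^-$ and $G_2(x,0)=H(x,0)=0$, the terms involving $a^-$ drop out, and $a^+=a_i^+$ on $\mathrm{supp}\,u$. Hence
\[
J(tu)=\frac{t^2}{2}\|u\|^2-\int_{\Omega_i^+}a^+(x)G_1(x,tu)\,dx .
\]

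\emph{Superquadratic lower bound.} From (A$_3$), for $|s|\gl R$ we have $\frac{d}{ds}\log G_1(x,s)\gl \frac{1}{\theta s}$ when $s>0$, with the analogous inequality for $s<0$. Integrating gives
\[
G_1(x,s)\gl c(x)|s|^{1/\theta}\quad\text{for } |s|\gl R,
\]
where $c(x)=\min\{G_1(x,R),G_1(x,-R)\}/R^{1/\theta}>0$. Since $G_1>0$ is continuous on the compact set $\overline\Omega\times\{\pm R\}$, $c$ has a positive lower bound $c_*>0$. On $|s|\ls R$, (A$_2$) bounds $|G_1|$ by a constant, so
\[
G_1(x,s)\gl c_*|s|^{1/\theta}-C \quad\text{for all } x,s .
\]
(A$_7$) even gives $G_1\gl 0$, which would let us discard the bounded part directly.

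\emph{Conclusion.} Substituting the bound,
\[
J(tu)\ls \frac{t^2}{2}\|u\|^2 - c_*t^{1/\theta}\int_{\Omega_i^+}a^+|u|^{1/\theta}\,dx + C\int_{\Omega}a^+\,dx .
\]
Because $1/\theta>2$, this tends to $-\infty$ as $t\to+\infty$, provided $\int_{\Omega_i^+}a^+|u|^{1/\theta}dx>0$.

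The main obstacle is exactly this positivity. It requires that $u\neq0$ on a set of positive measure inside $\Omega_i^+$ where $a>0$, not merely on $\partial\Omega_i^+$ or on a null set. Since $u\neq0$ in $L^2$ and $\mathrm{supp}\,u\subseteq_1\overline{\Omega_i^+}$, this follows once $|\partial\Omega_i^+|=0$. The natural place to get that is assumption (A$_1$) (the condition $|\overline\Omega^+\cap\overline\Omega^-|=0$), or we can read $\mathrm{supp}\,a_i^+$ in the a.e. sense as $\Omega_i^+$ itself. On $\Omega_i^+$ we have $a>0$, so the integral is positive.

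A secondary point is integrability: if $1/\theta\gl 2^*$, the integral $\int a^+|u|^{1/\theta}$ might be infinite. This only helps (it makes $-J(tu)$ even larger), but to stay safe we may replace $1/\theta$ by $\min\{1/\theta,p\}>2$. This is legitimate since the lower bound $G_1(x,s)\gl c_*|s|^{q}-C$ with smaller $q>2$ follows from the same estimate.
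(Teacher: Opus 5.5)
Your argument is correct and is essentially the paper's proof: localize $J(tu)$ to $\Omega_i^+$, use (A$_3$) to get $G_1(x,z)\gl c|z|^{1/\theta}-C$, and let $t\to+\infty$ using $1/\theta>2$. One correction to your hedging on positivity: (A$_1$) does not give $|\partial\Omega_i^+|=0$, and nothing of the kind is needed, because $G^{dd}_{i,+}$ is exactly the set of $L^2$ functions vanishing a.e.\ off $\{a_i^+\neq 0\}=\Omega_i^+$; hence every $u\in D^{dd}_{i,+}\setminus\{0\}$ is nonzero on a subset of $\Omega_i^+$ of positive measure, where $a^+>0$.
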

\begin{proof}\ It follows from (A$_3)$ that for some $c_2,c_3>0$,
$$G_1(x, z)\gl c_2|z|^{\frac{1}{\theta}}-c_3\ \text{for}\ x \in\Om \ \text{and}\ z\in\mb R.$$
For $i\in\{1,2,\cdots, m_1\}$, $u\in D_{i,+}^{dd}\bk\{0\}$ and $t>0$,   we have $$a^+(x)G_1(x, tu(x))=0\ \mbox{a.e.}\ x\in \Om\bk\Om_{i}^+,$$
and $$a^-(x)G_2(x, tu(x))=a^-(x)H(x, tu(x))=0\ \mbox{  a.e.}\ x\in\Om.$$ Consequently, we have for $u\in D_{i,+}^{dd}$,
$$
\be{ll}J(tu)&=\frac{t^2}{2}\|u\|^2-\di\int_{\Om_{i}^+} a^+(x) G_1(x, tu)dx\\
&\ls \frac{t^2}{2}\|u\|^2-c_2t^{\frac{1}{\theta}}\di\int_{\Om_{i}^+} a^+(x) |u|^{\frac{1}{\theta}}dx+c_3|\Om|\ri-\infty
\en
$$
 as $t\ri+\infty$. This implies that (H$_2)$ holds. \end{proof}
\medskip
\begin{lemma}\  Condition (H\(_{3})\) holds.\end{lemma}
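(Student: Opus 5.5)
We need to show (H$_3$): for each $j\in\Lambda_2$, $\inf_{D^{dd}_{j,-}}J<0$, and $J(u)\to+\infty$ as $\|u\|\to\infty$ in $D^{dd}_{j,-}$.

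The first step is to reduce $J$ on $D^{dd}_{j,-}$. For $u\in D^{dd}_{j,-}$, the function $u$ vanishes a.e. outside $\Omega_j^-$, and $a^+=0$ on $\Omega_j^-$ up to a null set by (A$_1$). Hence
\[
J(u)=\frac12\|u\|^2+\int_{\Omega_j^-}a^-(x)G_2(x,u)\,dx-\int_{\Omega_j^-}a^-(x)H(x,u)\,dx .
\]

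Coercivity comes next. By (A$_3$) we have $G_2\gl 0$, so that term can be dropped from below. From (A$_6$), together with continuity of $h$ and $h(x,0)=0$, we get $|h(x,t)|\ls C(|t|^{\beta_3-1}+|t|^{\bar\beta_3-1})$ for all $t$. Indeed, near $0$ the behaviour is like $|t|^{\beta_3-1}$, near $\infty$ it is like $|t|^{\bar\beta_3-1}$, and on compact sets in between $h$ is bounded and dominated by either power. Integrating gives $0\ls H(x,t)\ls C(|t|^{\beta_3}+|t|^{\bar\beta_3})$. Since $\beta_3,\bar\beta_3\in(1,2)$, Hölder and Poincaré yield $\int a^-H(x,u)\ls C\|a^-\|_\infty(\|u\|^{\beta_3}+\|u\|^{\bar\beta_3})$. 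Therefore
\[
J(u)\gl \tfrac12\|u\|^2-C(\|u\|^{\beta_3}+\|u\|^{\bar\beta_3})\to+\infty .
\]

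For negativity of the infimum, fix $\varphi\in C_c^\infty(\Omega_j^-)$ with $\varphi\gl0$ and $\varphi\not\equiv0$; such a $\varphi$ lies in $D^{dd}_{j,-}$. Take $t>0$ small. By (A$_5$), $G_2(x,t\varphi)\ls C t^{\beta_2}\varphi^{\beta_2}$ for small $t$, uniformly in $x$ since $\varphi$ is bounded. By (A$_6$), $H(x,s)\gl \frac{\alpha_1}{2\beta_3}s^{\beta_3}$ for $0<s$ small. Thus
\[
J(t\varphi)\ls \tfrac{t^2}{2}\|\varphi\|^2+Ct^{\beta_2}\!\int a^-\varphi^{\beta_2}-c\,t^{\beta_3}\!\int_{\Omega_j^-}a^-\varphi^{\beta_3}.
\]
The last integral is positive because $a^->0$ on the open set $\Omega_j^-$ and $\varphi>0$ on a set of positive measure there. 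Since $\beta_3<2<\beta_2$, the negative term dominates as $t\to0^+$, so $J(t\varphi)<0$.

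The step needing most care is the global growth bound on $H$ derived from the two limits in (A$_6$), and the uniformity in $x$. This is routine given continuity on $\ov\Omega\times\mb R$.
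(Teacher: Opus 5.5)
Your proposal is correct and follows the paper's route. For $\inf J<0$, both arguments scale a fixed bounded, nonnegative element of $D^{dd}_{j,-}$ by small $t$ and use $\beta_3<2<\beta_2$. For coercivity, both drop the $G_2\geq 0$ term and combine a subquadratic bound on $H$ from (A$_6$) with Sobolev/Poincaré. Your bound $|h(x,t)|\le C(|t|^{\beta_3-1}+|t|^{\bar\beta_3-1})$ is a cleaner version of the paper's one-sided estimate $h(x,u)\le \frac{3\bar\alpha_1}{2}|u|^{\bar\beta_3-2}u+c_4$, which does not bound $H(x,u)$ from above when $u<0$. Like the paper, though, it needs the limit at infinity in (A$_6$) to be read as $|t|\to\infty$.
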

\begin{proof}It follows from (A$_5)$ and (A$_6)$ that for some $d_0, d_1>0$ small enough such that  \[g_2(x, u)\ls \frac{3(\al_0+d_1)}{2}|u|^{\beta_2-2} u \ \mbox{and}\ h(x,u)\gl \frac{\al_1}{2}|u|^{\beta_3-2} u \] for  $0\ls u\ls d_0$.  Then,  for some  $u_0\in D_{j,-}^{dd}\bk\{0\}$ and $t>0$ such that $\|t u_0\|_\infty<d_0$, we have$$\be{ll}J(tu_0)&=\frac{t^2}{2}\|tu_0\|^2+\di\int_{\Om_{j}^{-}} a^-(x) G_2(x,tu_0(x))dx-\di\int_{\Om_{j}^{-}} a^-(x) H(x, tu_0(x))dx\\
&\ls \frac{t^2}{2}\|u_0\|^2+\frac{3(\al_0+d_1)t^{\beta_2}}{2\beta_2}\di\int_{\Om_{j}^{-}} a^-(x) |u_0(x)|^{\beta_2}dx-\frac{\al_1t^{\beta_3}}{2\beta_3}\di\int_{\Om_{j}^{-}} a^-(x) |u_0(x)|^{\beta_3}dx.\en$$
Note that $\beta_2>2$ and $\beta_3\in (1,2)$. Then we have $J(t_0 u_0)<0$  for $t_0>0$ small enough. Hence,
$$\inf\limits_{u\in D_{j-}^{dd}}J(u)<0.$$

It follows from (A$_5)$ and (A$_6)$ that for  some $c_4>0$ large enough and $u\in\mb R$, \[h(x,u)\ls \frac{3\bar\al_1}{2}|u|^{\bar\beta_3-2} u+c_4. \]
Then, by Sobolev embedding theorem  we have for each  $u\in D_{j,-}^{dd}$ and some $c_5,c_6>0$, $$\be{ll}J(u)&=\frac{1}{2}\|u\|^2+\di\int_{\Om_{j}^{-}} a^-(x) G_2(x,u(x))dx-\di\int_{\Om_{j}^{-}} a^-(x) H(x, u_0(x))dx\\
&\gl \frac{1}{2}\|u\|^2-\frac{\bar\al_1}{2\bar\beta_3}\di\int_{\Om_{j}^{-}} a^-(x) |u_0(x)|^{\bar\beta_3}dx-c_5\\
&\gl \frac{1}{2}\|u\|^2-\frac{\bar\al_1 \|a^-\|_\infty}{2\bar\beta_3}\di\int_{\Om}  |u_0(x)|^{\bar\beta_3}dx-c_5\\
&\gl \frac{1}{2}\|u\|^2-\frac{c_6\bar\al_1 \|a^-\|_\infty}{2\bar\beta_3}\|u\|^{\bar\beta_3}-c_5.\en$$
Note that $\bar\beta_3\in (1,2)$. Then we have $J(u)\ri+\infty$  as $\|u\|\ri +\infty$. Hence, (H\(_{3})\) holds.
\end{proof}

  Write the standard norm in $L^p(\Om)$ as $\|\cdot\|_p$ for $p\gl 1$.

\begin{lemma}\  Condition (H\(_{4})\) and (H\(_{5})\) hold.\end{lemma}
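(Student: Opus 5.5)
We verify (H$_4$) and (H$_5$) directly from the structure of $\mathbf{f}$ restricted to $D_{i,+}^{dd}$. The first step is to identify what $\mathbf{f}$ looks like there. For $u\in D_{i,+}^{dd}$ we have $u\perp a_k^+$ for $k\neq i$ and $u\perp a_j^-$ for all $j$, so $u=0$ a.e. outside $\Omega_i^+$. Since $g_1(x,0)=g_2(x,0)=h(x,0)=0$, it follows that
\[
\mathbf{f}(u)(x)=a_i^+(x)g_1(x,u(x))\quad\mbox{a.e. in }\Omega .
\]
This is the same reduction used in the proof of Lemma 4.2.

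For (H$_4$) we first need $P\cap D_{i,+}^{dd}\neq\emptyset$. We take any nonzero $\varphi\in C_c^\infty(\Omega_i^+)$ with $\varphi\gl 0$. Then $\mathrm{supp}\,\varphi\subseteq_1\mathrm{supp}\,a_i^+$, so by Example 2.3 we get $\varphi\in G^{dd}_{i,+}$. Hence $\varphi\in P\cap D_{i,+}^{dd}$. Next let $u\in P\cap D_{i,+}^{dd}$ and $w\in P_1$. Hypothesis (A$_7$) with $u\gl 0$ gives $g_1(x,u(x))\gl 0$ wherever $u(x)>0$; where $u(x)=0$ the value is $g_1(x,0)=0$. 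Since also $a_i^+\gl 0$, we obtain
\[
\langle\mathbf{f}(u),w\rangle=\int_{\Omega_i^+}a_i^+g_1(x,u)w\,dx\gl 0 .
\]
Thus $\mathbf{f}(P\cap D_{i,+}^{dd})\subset P_1^*$.

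For (H$_5$) we take $Z=L^{r}(\Omega)$ with exponent $r=\max\{p,\beta_1+1\}$ capped below $2^*$. This is a Banach lattice satisfying $X\hookrightarrow Z\hookrightarrow E$ by the Sobolev embedding. From (A$_4$), together with the same behaviour on the negative side (applied via $g_1$ and (A$_2$)), we get for every $\varepsilon>0$ a constant $C_\varepsilon$ such that
\[
|g_1(x,t)|\ls \varepsilon|t|+C_\varepsilon|t|^{r-1}.
\]
Applying Hölder's inequality with exponents $r'$ and $r$, we obtain
\[
\langle\mathbf{f}(u),w\rangle\ls\|a^+\|_\infty\big(\varepsilon c\|u\|_r+C_\varepsilon\|u\|_r^{r-1}\big)\|w\|_r .
\]
To get an honest $o(s)$ rather than $\varepsilon s$, we instead use the superlinear bound $|g_1(x,t)|\ls C(|t|^{\beta_1\wedge (r-1)}+|t|^{r-1})$. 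This bound holds because $g_1(x,t)=o(|t|^{\beta_1})$ near $0$ with $\beta_1>2$, and (A$_2$) controls $g_1$ for large $|t|$. It yields
\[
g(s)=C\big(s^{q}+s^{r-1}\big)
\]
with some $q>1$. This $g$ is nondecreasing and satisfies $g(s)=o(s)$.

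The main obstacle is the choice of $Z$ together with the exponent bookkeeping. We need $1<q$ and $r<2^*$ while still controlling $|u|^{q}w$ by $\|u\|_r^{q}\|w\|_r$. This requires $q+1\ls r$, which we arrange by taking $r$ close to $2^*$. Hypothesis (A$_4$) is stated only for $t\to0^+$, so for negative $t$ we appeal to the oddness-type sign control in (A$_7$) combined with continuity. If this turns out to be insufficient, we would instead interpret (A$_4$) as holding for $|t|\to0$.
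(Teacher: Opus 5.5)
Your approach is the paper's. You reduce $\mathbf{f}(u)$ on $D^{dd}_{i,+}$ to $a_i^+g_1(x,u)$, get (H$_4$) from the sign condition (A$_7$), and get (H$_5$) by bounding $|g_1|$ by two superlinear powers and applying H\"older's inequality in a Lebesgue space $Z$. The paper uses the cruder bound $|g_1(x,t)|\le c_7(|t|^{\beta_1-1}+|t|^{p-1})$ with $Z=L^{\max\{\beta_1,p\}}(\Omega)$. Your exponents also work, and you do not need to push $r$ toward $2^*$: with $q=\min\{\beta_1,r-1\}$ the constraint $q+1\le r$ holds automatically, so $r=p$ already suffices.

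The step that fails is your primary treatment of $t<0$. Hypothesis (A$_7$) plus continuity only give $g_1(x,t)\le 0$ and $g_1(x,t)\to 0$ as $t\to 0^-$, with no rate, and no rate can be extracted. Take $g_1(x,t)=-|t|^{1/2}$ for $-1\le t<0$, extended so that (A$_2$), (A$_3$), (A$_4$) (which concerns only $t\to0^+$) and (A$_7$) hold. Let $0\le\varphi\in C_c^\infty(\Omega_i^+)$ with $\varphi\le 1$, $\varphi\neq 0$, and set $u=w=-\varepsilon\varphi$. Then $\langle\mathbf{f}(u),w\rangle=\varepsilon^{3/2}\int_\Omega a_i^+\varphi^{3/2}\,dx$. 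But (H$_5$) would force this to be at most $g(\varepsilon\|\varphi\|_Z)\,\varepsilon\|\varphi\|_Z=o(\varepsilon^2)$, whatever $Z$ and $g$ are. So (H$_5$) is false under the hypotheses as literally stated. This is not a harmless corner case: Lemma 3.4 applies (H$_5$) precisely to $u_-$, which takes negative values.

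Your fallback, reading (A$_4$) as $|g_1(x,t)|=o(|t|^{\beta_1})$ as $|t|\to0$, is therefore necessary, not optional. It is exactly what the paper assumes without comment when it asserts its bound on $g_1$ for all $u\in\mathbb{R}$. With that reading your argument is complete. One further point: even for $t>0$, the $\limsup$ in (A$_4$) only bounds $g_1$ from above. It is (A$_7$), giving $g_1\ge0$ there, that turns (A$_4$) into $|g_1(x,t)|=o(t^{\beta_1})$ as $t\to 0^+$, and that is the legitimate role of (A$_7$) in verifying (H$_5$).
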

\begin{proof}\ For each $u\in D_{i,+}^{dd}\cap P$, $\mathbf{f}(u)(x)=a^+(x)g_1(x, u)\gl 0$  \ \mbox{for}\  $x\in \Om$, and so
$$\langle \mathbf{f}(u(x)), v(x)\rangle\gl 0$$
for each  $v\in P_1$, which implies that $\mathbf{f}(D_{i,+}^{dd}\cap P)\subset P_1^*$. Hence, (H$_4)$ holds.

  It follows from $(A_{2})$ and $(A_{4})$ that for some $c_7>0$, \[|g_1(x, u)|\ls c_7(|u|^{\beta_1-1}+|u|^{p-1})\ \mbox{for}\ x\in\Omega\ \mbox{and}\ u\in\mathbb{R}.\]
By using H\"{o}lder inequality we have $$\langle \mathbf{f}(u),w\rangle\ls c_7(\|u\|_{\beta_1}^{\beta_1-1}\|w\|_{\beta_1}+\|u\|_p^{p-1}\|w\|_p).$$
Let $q=\max\{\beta_1, p\}>1$ and $Z=L^q(\Om)$. Then we have
\[X\hookrightarrow Z,\  Z\hookrightarrow L^{\beta_1}(\Om),\  Z\hookrightarrow L^{p}(\Om).\]
So, we have,
$$\langle\mathbf{f}(u),w\rangle\ls  c_8(\|u\|_{q}^{\beta_1-1}+\|u\|_{q}^{p-1})\|w\|_q$$
for some $c_8>0$. Let $g(s)=c_8(s^{\beta_1-1}+ s^{p-1})$ for any $s\in \mb R_+$. Then the condition (H\(_{5})\) holds.
\end{proof}

Note that for each $i=1,2,\cdots,m_1$, \[J(u)=\frac{1}{2}\|u\|^2-\di\int_{\Om_{i}^+} a^+(x) G_1(x, u)dx\ \mbox{for}\ u\in D_{i,+}^{dd}.\]

By usual way we can show that $J$ satisfies the (P.S.) condition on each $D_{i,+}^{dd}$. Since $J$ is coercive on each $D_{j,-}^{dd}$, we can easily see that $J$ satisfies (P.S.) condition on $D_{j,-}^{dd}$.

Since we can construct $2^{2m_1}-1$  nontrivial solutions whose support set contained in $\Om^+$ ,
 $2^{m_1}-1$ positive solutions and  $2^{m_1}-1$ negative solutions  whose support set contained in ${\Om}^+$.
Hence, the minimal number of  sign-changing solutions whose support set contained in $\Om^+$ is
\[
2^{2m_1} - 2^{m_1+1} + 1.
\]
Therefore, the minimal number of  sign-changing solutions defined on $\Om$ is
\[
\big(2^{2m_1} - 2^{m_1+1} + 1\big)\cdot 2^{m_2}.
\]

According to Theorem 3.2 and Lemma 4.1$\sim$ 4.4, now we have the following result.

\begin{theorem}\ Assume  that  the conditions (A\(_{1})\sim\)(A\(_{7})\) hold. Then $(4.1)$ has at least $2^{2m_1+m_2}-1$ solutions. Among them, there exist at least $2^{m_1}-1$ positive solutions, $2^{m_1}-1$ negative solutions and $(2^{2m_1}-2^{m_1+1}+1)\cdot 2^{m_2}$ sign-changing solutions.
\end{theorem}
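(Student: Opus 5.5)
The plan is to verify that problem (4.1) fits the abstract framework of Theorem 3.2 and then count the critical points that the proof of Theorem 3.2 produces.

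\emph{Step 1: the framework.} I take $X=H_0^1(\Omega)$, $E=L^2(\Omega)$ and $P_1$ the cone of a.e.\ nonnegative functions. Using (A$_1$), I split $a^+$ into the fragments $a_i^+$ supported on the components $\Omega_i^+$, and $a^-$ into the fragments $a_j^-$ supported on $\Omega_j^-$. Since $|\ov\Omega^+\cap\ov\Omega^-|=0$, these fragments are pairwise disjoint. The decomposition of $E$ required by (3.1) is only achievable up to the zero set $\{a=0\}$. There I would use the relaxed version in Remark 3.10: the direct sum of the bands $G^{dd}_{i,+}$ and $G^{dd}_{j,-}$ is complemented by the lattice-disjoint subspace of functions supported in $\{a=0\}$. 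On that complement $\mathbf{f}$ vanishes, so it contributes nothing to the critical point counts.

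Compatibility of disjointness and orthogonality follows as shown before Lemma 4.2. In $E$ it holds because disjoint functions have supports with null intersection. In $X$ it holds by Lemma 4.1, since $Du=0$ a.e.\ off $\operatorname{supp}u$. Locality and orthogonal additivity of the Nemytskii map $\mathbf{f}$ come from Example 2.2, using $g_1(x,0)=g_2(x,0)=h(x,0)=0$. Continuity of $\mathbf{f}$ from $X$ to $E$ follows from (A$_2$) and the Sobolev embedding. Together these give (H$_1$). Lemmas 4.2--4.4 then supply (H$_2$)--(H$_5$), with $Z=L^q(\Omega)$ for $q=\max\{\beta_1,p\}$.

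\emph{Step 2: the (P.S.) condition on each subspace.} On $D^{dd}_{i,+}$ the functional reduces to $\frac12\|u\|^2-\int_{\Omega_i^+}a^+G_1(x,u)\,dx$. Its (P.S.) condition follows the usual Ambrosetti--Rabinowitz argument. I would first bound $J(u_n)-\theta\langle\mathbf{f}(u_n),u_n\rangle$ from below by $(\frac12-\theta)\|u_n\|^2-C$, using (A$_3$), to get boundedness of $(u_n)$. Then I would pass to a convergent subsequence using the compact embedding $H_0^1\hookrightarrow L^p$ with $p<2^*$ and the identity $u_n-A_i(u_n)\to0$. On $D^{dd}_{j,-}$ the functional is coercive by Lemma 4.3, so any (P.S.) sequence is bounded, and the same compactness argument concludes.

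\emph{Step 3: applying Theorem 3.2.} With all hypotheses in place, part 3) of Theorem 3.2 yields the following critical points of $J$, which are weak solutions of (4.1):
\begin{itemize}
\item on each $D^{dd}_{i,+}$, a positive bump $x_{i,+}$, a negative bump $y_{i,+}$ and a sign-changing bump $z_{i,+}$;
\item on each $D^{dd}_{j,-}$, a bump $w_j$.
\end{itemize}
Disjoint sums of these bumps, with at most one bump per component, are again critical points by orthogonal additivity, exactly as in (3.5). This gives $4^{m_1}2^{m_2}-1=2^{2m_1+m_2}-1$ nontrivial solutions. They are distinct because their restrictions to the disjoint components differ.

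\emph{Step 4: the count by sign.} Sums using only $x_{i,+}$ give the $2^{m_1}-1$ positive solutions, and sums using only $y_{i,+}$ give the $2^{m_1}-1$ negative solutions. For the sign-changing count, consider the choices on $\Omega^+$. There are $4^{m_1}$ choices in all, including the empty one. From these I discard the all-positive or empty choices ($2^{m_1}$ of them) and the all-negative or empty choices ($2^{m_1}$), and add back the empty choice, which was removed twice. This leaves $2^{2m_1}-2^{m_1+1}+1$ choices that change sign on $\Omega^+$. Each of them remains sign-changing after adding any of the $2^{m_2}$ choices of bumps on $\Omega^-$. That gives the stated number $(2^{2m_1}-2^{m_1+1}+1)\cdot 2^{m_2}$.

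\emph{Main obstacle.} I expect Step 2 on $D^{dd}_{i,+}$ to be the hardest part, together with the fact that (3.1) fails on $\{a=0\}$. I would handle the latter by checking that the proofs of Theorems 3.1 and 3.2 only use that every test function $v$ splits into disjoint pieces, where the piece in the complement $E_0$ satisfies $(x_i,v_0)=0=\langle\mathbf{f}(x_i),v_0\rangle$. Both equalities hold by compatibility and locality.
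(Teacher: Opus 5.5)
Your plan follows the paper's argument: check (H$_1$)--(H$_5$) and (P.S.) on each piece, apply part 3) of Theorem 3.2, and count disjoint sums of bumps. It fails at the point you flag as the main obstacle, but the obstruction is in $X$, not in $E$, so the complement from Remark 3.10 does not help. To turn a critical point $x_i$ of $J|_{D^{dd}_{i,+}}$ into a critical point of $J$ on $X$ (the computation (3.5)), you must split an arbitrary $v\in X=H_0^1(\Omega)$ into disjoint pieces that themselves lie in $X$. Otherwise $(x_i,v_{+,i})$ is undefined, and the equation on $D^{dd}_{i,+}$ cannot be tested against $v_{+,i}$. The lattice splitting of $E$ produces the pieces $v\chi_{\Omega_i^+}$, $v\chi_{\Omega_j^-}$ and $v\chi_{\{a=0\}}$, and these are generally not in $H^1$. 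For example, take $\Omega=(-1,1)$, $a(x)=x$ and $v(x)=1-x^2$: the piece $v\chi_{(0,1)}$ jumps at $0$. So $\bigoplus_i D^{dd}_{i,+}\oplus\bigoplus_j D^{dd}_{j,-}$, even after adding $X\cap E_0$, is a proper closed subspace of $H_0^1(\Omega)$. In other words, Remark 3.1 does not follow from (3.1), and critical points of $J$ on this subspace need not be critical points on $X$.

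More care will not repair this. Take a positive bump $x_{i,+}\in (P\cap D^{dd}_{i,+})\setminus\{0\}$. It vanishes on $\Omega^-$, so $\mathbf f(x_{i,+})=a^+g_1(x,x_{i,+})\ge 0$ by (A$_7$). If it solved (4.1) weakly on $\Omega$, it would be a nonnegative weak supersolution of $-\Delta u\ge 0$ on the connected set $\Omega$. The weak Harnack inequality would then force $x_{i,+}\equiv 0$, because $x_{i,+}$ vanishes on the other components, which have positive measure whenever $m_1+m_2\ge 2$. For smooth $\partial\Omega_i^+$ the defect is visible directly: $(x_{i,+},v)-\langle\mathbf f(x_{i,+}),v\rangle=\int_{\partial\Omega_i^+\cap\Omega}\partial_\nu x_{i,+}\,v\,dS$, which is nonzero by Hopf's lemma. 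The same argument applied to $-y_{i,+}$ rules out the negative bumps. So the objects you sum and count in Steps 3--4 are not solutions of (4.1), and the stated lower bounds are not established.

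The paper's proof uses the same unjustified splitting (Remark 3.1 and (3.5)), so following it does not close the gap. A valid approach would need a decomposition of $H_0^1(\Omega)$ compatible with its $H^1$ structure, such as the Chang--Jiang decomposition recalled in the introduction. There the second piece is harmonic on $\Omega_0$ and hence not lattice-disjoint from the first, so gluing bumps by orthogonal additivity is no longer available.

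A smaller point of the same kind: if $p-1>2^*/2$ in (A$_2$), then $\mathbf f(u)$ need not lie in $E=L^2(\Omega)$. So continuity of $\mathbf f:X\to E$ in (H$_1$) does not follow from the Sobolev embedding as you claim.
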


\begin{remark} Set
\[D=\{g\in E: \mbox{supp}\ g\subset_1 \ov{\Omega}^+\cap \ov{\Omega}^-\}\]if $| \ov{\Omega}^+\cap \ov{\Omega}^-|\neq 0$. Then we  have
\[E=\bigoplus\limits_{i=1}^{m_1}G^{dd}_{i,+}\oplus\bigoplus\limits_{j=1}^{m_2}G^{dd}_{j,-}\oplus D.\]
 This implies that $\bigoplus\limits_{i=1}^{m_1}G^{dd}_{i,+}\oplus\bigoplus\limits_{j=1}^{m_2}G^{dd}_{j,-}$ is complementable by a lattice-disjoint subspace in \(E\). Thus, in the case of $| \ov{\Omega}^+\cap \ov{\Omega}^-|\neq 0$,  we can also give some existence results for solutions of (4.1) by making use of Theorem 3.2.
\end{remark}

\begin{remark}Here we present an application of Theorem 3.2 to the study of indefinite elliptic equations.
Theorem 3.2 admits extensive applications in  other indefinite problems. Interested readers may derive some corresponding results along  this line.\end{remark}


\begin{thebibliography}{99}
\vspace{0.5em}
\bibitem{s1} K.C. Chang, M.Y. Jiang, Morse theory for indefinite nonlinear elliptic problems.
Ann. Inst. H. Poincar\'{e} C Anal. Non Lin\'{e}aire 26 (2009), no. 1, 139-158.
\bibitem{s2} K.C. Chang,  M.Y. Jiang, Dirichlet problems with indefinite nonlinearities, Cacl. Var. 20 (2004) 257-282.
\bibitem{s3}N. Ackermann, T. Bartsch, P. Kaplicky, P. Quittner, A priori bounds, nodal equilibria and connecting orbits in indefinite superlinear parabolic problems, Trans. Amer. Math. Soc., 360(2008), no. 7, 3493-3539.
\bibitem{s4}H. Berestycki, I. Capuzzo-Dolcetta, L. Nirenberg, Superlinear indefinite elliptic problem and nonlinear Liouville theorems, Topol. Methods Nonlinear Anal. 4 (1994) 59-78.
\bibitem{s5}H. Berestycki, I. Capuzzo-Dolcetta, L. Nirenberg, Variational methods for indefinite superlinear homogeneous elliptic problems, NoDEA 2 (1995) 533-572.
\bibitem{s6}D. Costa, H. Tehrani, Existence of positive solutions for a class of indefinite elliptic problems in $\mb R^N$, Calc. Var. Partial Differential Equations 13 (2001) 159-189.
\bibitem{s7}D. Costa, H. Tehrani, Existence and multiplicity results for a class of Schr\"{o}dinger equations with indefinite nonlinearities, Adv. Differential Equations 8 (2003) 1319-1340.
\bibitem{s8}H. Amann, J. L\'{o}pez-G\'{o}mez, A priori bounds and multiple solutions for superlinear indefinite nonlinear elliptic equations, J. Differential Equations 146 (1998) 336-374.
\bibitem{s9}S. Alama, M. Del Pino, Solutions of elliptic equations with indefinite nonlinearities via Morse theory and linking, Ann. Inst. H. Poincar\'{e} Anal. Nonlin\'{e}aire 13 (1996) 95-115.
\bibitem{s10}S. Alama, G. Tarantello, Elliptic problems with nonlinearities indefinite in sign, J. Funct. Anal. 141 (1996) 159-215.
\bibitem{s11}D. Gilbarg, N.S. Trudinger, Elliptic Partial Differential Equations of Second Order, Springer-Verlag, 2001.
\bibitem{s12} Luxemburg W. A. J., Zaanen A. C., Riesz spaces. Vol. I. North-Holland Math. Library
North-Holland Publishing Co., Amsterdam-London; American Elsevier Publishing Co., Inc., New York, 1971, xi+514 pp.
\bibitem{s13}Abasov Nariman; Pliev Marat, Disjointness-preserving orthogonally additive operators in vector lattices.
Banach J. Math. Anal. 12 (2018), no. 3, 730-750.
\bibitem{s14}Mykhaylyuk Volodymyr, Pliev Marat; Popov Mikhail, The lateral order on Riesz spaces and orthogonally additive operators.
Positivity 25 (2021), no. 2, 291-327.
\bibitem{s15}J. Sun, X. Liu, Computation of topological degree in ordered Banach spaces with lattice structure and its application to superlinear differential equations, J. Math. Anal. Appl. 348 (2008), no. 2, 927-937.
\bibitem{s16}J. Sun, X. Xu, Positive solutions of semi-positone nonlinear operator equations in Banach spaces with lattice structure.
Positivity 17 (2013), no. 4, 995-1007.
\bibitem{s17}J. Sun, X. Xu, Positive solutions of non-positone nonlinear operator equations and their applications.
Acta Math. Sinica (Chinese Ser.) 55 (2012), no. 1, 55-64.
\bibitem{s18}Perera K. and Schechter M., Topic in Critical Point Theory, Cambridge University Press, 2013.
\bibitem{s19} Z. Liu, J. Sun, Invariant sets of descending flow in critical point theory with applications to nonlinear differential equations, J. Differential Equations 172 (2001) 257-299.
\bibitem{s20}T. Bartsch and Z. Liu, Location and critical groups of critical points in Banach spaces and applications to nonlinear eigenvalue problems. Adv. Differential equations, 9(5-6): 645-676,2004.
\bibitem{s21}Heinz H. Bauschke,
Patrick L. Combettes, Convex Analysis and Monotone Operator Theory in Hilbert Spaces (Second edition),  Springer, New York (2016).
\bibitem{s22}Deimling K., Ordinary Differential Equations in Banach Spaces, Springer-Verlag, Berlin Heidelberg New York 1977.
\bibitem{s23}Z. Zhang, K. Perera, Sign changing solutions of Kirchhoff type problems via invariant sets of descent flow, J. Math. Anal. Appl. 317 (2006), no. 2, 456-463.
\bibitem{s24}Z. Zhang,  Chen Jianqing, S. Li,
Construction of pseudo-gradient vector field and sign-changing multiple solutions involving $p$-Laplacian, J. Differential Equations, 201(2004), no.2, 287-303.
\bibitem{s25}T. Bartsch and Z.Liu,
On a superlinear elliptic $p$-Laplacian equation. J. Differential Equations, 198(2004), no.1, 149-175.
\bibitem{s26}T. Bartsch and S. Li, Critical point theory for asymptotically quadratic functionals and applications to problems with resonance, Nonlinear Anal., 28(3): 419-441,1997.
\bibitem{s27}Z. Q. Wang, On a superlinear elliptic equation,    Ann. Inst. H. Poincar\'{e} C Anal. Non Lin\'{e}aire 26 (1991), no. 8, 43-57.
\bibitem{s28} K.C. Chang, Methods in Nonlinear Analysis, Springer-Verlag, Berlin Heidelberg, 2005.
\end{thebibliography}
\end{document}